\theoremstyle{plain}
\newtheorem{thm}{Theorem}[section]
\newtheorem*{thm*}{Theorem}
\newtheorem{prop}[thm]{Proposition}
\newtheorem{lem}[thm]{Lemma}
\newtheorem{cor}[thm]{Corollary}
\newtheorem*{cor*}{Corollary}
\newtheorem{defi}{Definition}[section]
\newtheorem{rem}{Remark}
\newcommand {\R} {\mathbb{R}} \newcommand {\Z} {\mathbb{Z}}
\newcommand {\T} {\mathbb{T}} \newcommand {\N} {\mathbb{N}}
\newcommand {\p} {\partial}
\newcommand {\dt} {\partial_t}
\DeclareMathOperator{\esssup}{ess-sup}
\begin{document}
\title[On the Boussinesq Equations]{On Enhanced Dissipation for the
  Boussinesq Equations}
\author{Christian Zillinger}
\address{BCAM -- Basque  Center  for  Applied  Mathematics, Mazarredo 14, E48009
  Bilbao, Basque Country -- Spain}
\email{czillinger@bcamath.org}
\begin{abstract}
  In this article we consider the stability and damping problem for the
  2D Boussinesq equations with partial dissipation near a two parameter family of
  stationary solutions which includes Couette flow and
  hydrostatic balance.

  In the first part we show that for the linearized problem in an infinite periodic channel
  the evolution is asymptotically stable if any diffusion coefficient is
  non-zero. In particular, this imposes weaker conditions than for example
  vertical diffusion.
  Furthermore, we study the interaction of shear flow, hydrostatic balance and
  partial dissipation.

  In a second part we adapt the methods used by Bedrossian, Vicol and Wang
  \cite{bedrossian2016sobolev} in the Navier-Stokes problem and combine them
  with cancellation properties of the Boussinesq equations to establish small
  data stability and enhanced dissipation results for the nonlinear Boussinesq problem with full dissipation.
\end{abstract}
\keywords{Boussinesq equations, enhanced dissipation, hydrostatic balance, shear
  flow, partial dissipation}
\subjclass[2010]{35Q79,35Q35,76D05,35B40}
\maketitle
\tableofcontents

\section{Introduction}
\label{sec:intro}

The Boussinesq equations are a common model in the study of heat
conduction and are given by a coupled system of the
Navier-Stokes equations and a diffusion equation for the temperature (see for
instance  \cite[Section 3.5]{temam2012infinite}).
In this article we specifically consider the two-dimensional incompressible
Boussinesq equations on $\T \times \R$ which model a heat-conducting
fluid in terms of its velocity field $v$, the pressure $p$ and its temperature density
$\theta$:
\begin{align*}
  \dt v + v \cdot \nabla v + \nabla p &= \nu_1 \p_x^2 v + \nu_2 \p_y^2 v +
  \begin{pmatrix}
    0 \\ \theta
  \end{pmatrix}, \\
  \dt \theta + v \cdot \nabla \theta &= \eta_1 \p_x^2 \theta + \eta_2 \p_y^2 \theta, \\
  \nabla \cdot v &=0, \\
  (t,x,y) & \in (0,\infty) \times \T \times \R.
\end{align*}
Here differences in $\theta$ cause the fluid to rise or fall due to buoyancy and the
temperature density is advected by the velocity.
The diffusion coefficients $\nu_1,\nu_2,\eta_1,\eta_2\geq 0$ are constants which model viscosity and thermal diffusion and may in general be anisotropic.

In the setting of full dissipation, that is if $\nu_1, \nu_2,\eta_1, \eta_2$
are all bounded below by a common constant $\nu>0$, global well-posedness
results are classical and make use of energy arguments (see for instance the
textbook by Teman \cite{temam2012infinite} or the articles
\cite{foias1987attractors,cannon1980initial}).
However, in some physical problems the thermal and viscous diffusivity $\nu_i, \eta_i$
may be of very different orders of magnitude or highly anisotropic.
In particular, some coefficients might be much smaller than all others.
A natural question thus concerns the problem of \emph{partial
  dissipation} where some of the coefficients are allowed to vanish.
Here, in a recent article Doering, Wu, Zhao and Zheng \cite{doering2018long}
consider the case without thermal diffusivity, $\nu_1=\nu_2>0$ and
$\eta_1=\eta_2=0$. We further mention the works by Titi, Lunasin and Larios,
\cite{li2016global,larios2013global} on vertical dissipation and
anisotropic dissipation and the works by Chae, Kim and Nam
\cite{chae1999local,chae2006global} on cases with no
viscosity or thermal diffusivity. For further discussion and references, the interested
reader is referred to the lecture notes by JH Wu \cite{wu20122d}.
In all these problems partial dissipation also implies a potential lack of smoothing and hence
questions of well-posedness become challenging problems. 

In this article we are interested in the behavior of the Boussinesq equations
with partial or full dissipation close to the following two parameter family of stationary solutions:
\begin{align}
\label{eq:1}
  v=
  \begin{pmatrix}
    \beta y \\ 0
  \end{pmatrix},\  \theta = \alpha y.
\end{align}
The case $\beta=0, \alpha>0$ is known as \emph{hydrostatic balance} and shares
structural similarities with stratified compressible flow (that is, with a mass
density $\rho$ instead of a temperature density $\theta$; see Section
\ref{sec:couette}).
The case $\beta=1, \alpha=0$ corresponds to a linear shear flow in the
Navier-Stokes equations (e.g. between moving plates or as a model for rotating
concentric cylinders) and is known as Couette flow.
We aim at understanding the asymptotic stability of
these solutions, the interaction of hydrostatic balance and shear and, in particular, at obtaining (mixing enhanced) dissipation
rates for the partial dissipation case.

Related settings have for instance been studied in the following works:
\begin{itemize}
\item Wu, Xu and Zhu \cite{wu2019stability} studied the nonlinear Boussinesq-B\'enard
  system near the trivial steady state $(0,0)$. 
\item The linearized inviscid Boussinesq problem near Couette flow ($\alpha=0$, $\beta=1$) was
  considered by Yang and Lin \cite{yang2018linear} in a work on stratified fluids
  (the linearized equations of the stratified fluids problem and of the
  Boussinesq problem are structurally similar).
\item In a recent work W. Tao and Wu \cite{tao20192d} consider the corresponding
  viscous linear problem with vertical dissipation, $\nu_x=\eta_x=0$, $\nu_y,
  \eta_y>0$ in the half-infinite periodic channel $\T \times (0,\infty)$.
  In Section \ref{sec:couette} we revisit this problem for the infinite channel
  $\T \times \R$ with general partial dissipation and with hydrostatic balance.
  Here the interaction of shear flow, hydrostatic balance and partial
  dissipation leads to challenging stability problems, while the absence of
  boundaries simplifies approaches by Fourier methods.
\item Since the Boussinesq equations are coupled Navier-Stokes equations,
  results for the latter are very closely related.
  In Section \ref{sec:fulldissipation} we adapt the strategy employed by Bedrossian, Vicol and Wang
\cite{bedrossian2016sobolev} for the 2D Navier-Stokes equations to the
Boussinesq equations. Furthermore, we combine these techniques with cancellation
properties of the Boussinesq equations to treat the case of ``large'' $\alpha$.
\end{itemize}

In this article we are interested in three main questions:
\begin{itemize}
\item How do shear flows, hydrostatic balance and diffusion interact and what
  (mixing enhanced) damping rates can be obtained?
\item How small should perturbations be so that that the nonlinear dynamics
  remain well-approximated by the linear dynamics?
  Or in other words, can we describe a Sobolev stability threshold for the
  nonlinear problem as the dissipation coefficients tend to zero?
\item How little dissipation is necessary for asymptotic stability results? In
  particular, we study how vanishing diffusivity coefficients effect decay rates
  and asymptotic stability results in the linearized problems.
\end{itemize}
We remark that in the inviscid case the linearized problem is algebraically
unstable at the level of the vorticity (see Lemma \ref{lem:trivialunstable}).
However, it is stable at the level of the velocity (see \cite{yang2018linear}).
In this work we focus on the (partially) viscous problem and stability of the
vorticity in Sobolev regularity.
In view of the results of Bedrossian, Vicol and Masmoudi
\cite{bedrossian2016enhanced} a further extension to the case of Gevrey regular
data seems possible but technically very challenging (see also the comments
following Corollary \ref{cor:Gevrey}). In particular, it would
have to precisely capture the growth and loss of regularity due to resonances
(see \cite{bedrossian2013landau,dengZ2019,dengmasmoudi2018,zillinger2020landau}).

\subsection{Main Results}
%\label{sec:main}

Our first main results concern small data nonlinear asymptotic stability and
enhanced dissipation for the setting with shear and with full dissipation
$\nu_x=\nu_y>0, \eta_x=\eta_y>0$.
In Theorem \ref{thm:nonlinearintro} (later restated as Theorem \ref{thm:nonlinear}) we focus on the setting where $\alpha$ is
``small'' and adapt the methods of \cite{bedrossian2016enhanced} used in the
Navier-Stokes problem near Couette flow to the Boussinesq equations.
We then combine these methods with energy arguments and cancellations for
hydrostatic balance (see \cite{doering2018long}) to treat the ``large'' $\alpha$
case in Theorem \ref{thm:largealphaintro} (later restated as Theorem \ref{thm:largealpha}).
Here and in the following results $\omega, v$ and $\theta$ denote the
perturbation of the two parameter family \eqref{eq:1} and if $\beta \neq 0$ we
consider coordinates moving with the shear:
\begin{align*}
  (x+ t \beta y, y, t).
\end{align*}
Under this change of variables the gradient and Laplacian are given by
\begin{align*}
  \nabla_t =
  \begin{pmatrix}
    \p_x \\ \p_t - t \beta \p_x
  \end{pmatrix},
  \Delta_t = \p_x^2 + (\p_y-t \beta \p_x)^2.
\end{align*}
\begin{thm}
  \label{thm:nonlinearintro}
  Let $N\geq 5$ and let $\beta=1$, $\epsilon_1\leq \frac{1}{100} \min(\nu,\eta)^{1/2}$,
  $\epsilon_{2} \leq \frac{1}{100}
  \sqrt{\eta}\sqrt{\nu} \epsilon_1$ and suppose that $0\leq \alpha < \eta^{1/2} \nu^{1/3}\frac{\epsilon_2}{\epsilon_1}$.
  Then if $\|\omega_0\|_{H^N} \leq \epsilon_1$ and $\|\theta_0\|_{H^N}\leq
  \epsilon_2$, the unique global solution with this initial data satisfies
  \begin{align}
    \|\omega\|_{L^\infty((0,\infty);H^N)}^2 + \nu \|\nabla_t \omega\|_{L^2((0,\infty); H^N)}^2 + \|\nabla_t \Delta_t^{-1} \omega\|_{L^2((0\infty); H^N)}^2  &\leq 8 \epsilon_1^2, \\
    \|\theta\|_{L^\infty((0,\infty); H^N)}^2 + \eta \|\nabla_t \theta\|_{L^2((0,\infty); H^N)}^2 &\leq 8 \epsilon^2.
  \end{align}
\end{thm}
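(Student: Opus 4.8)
The plan is to run a bootstrap (continuity) argument in the moving frame, adapting the energy method of \cite{bedrossian2016sobolev} for the Navier--Stokes equations near Couette flow and treating the buoyancy coupling \emph{perturbatively}; this is justified here precisely because $\alpha$ and $\epsilon_2$ are small, so the cancellation structure of \cite{doering2018long} can be reserved for the ``large'' $\alpha$ regime of Theorem \ref{thm:largealpha}. First I would record the perturbation equations in the coordinates $(x+t\beta y,y,t)$: with $v=\nabla_t^\perp \Delta_t^{-1}\omega$ the vorticity solves a forced advection--diffusion equation, and the temperature is advected against the background profile,
\begin{align*}
  \dt \omega + v\cdot \nabla_t \omega = \nu \Delta_t \omega + \p_x \theta, \qquad
  \dt \theta + v\cdot \nabla_t \theta = \eta \Delta_t \theta - \alpha\, \p_x \Delta_t^{-1}\omega,
\end{align*}
the last term coming from $v\cdot\nabla(\alpha y)=\alpha v^y=\alpha\,\p_x\Delta_t^{-1}\omega$. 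Both linear decay mechanisms are already visible in the symbols: for $k\neq 0$ the symbol $k^2+(\xi-tk)^2$ of $-\Delta_t$ grows cubically in time, giving \emph{enhanced dissipation} on the nonzero modes, while $\nabla_t\Delta_t^{-1}$ decays like $(tk)^{-1}$, giving \emph{inviscid damping} of the velocity.

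Next I would fix the energy functional. Following \cite{bedrossian2016sobolev} I introduce time-dependent Fourier multipliers (ghost weights), bounded above and below so that the weighted and plain $H^N$ norms are equivalent, whose logarithmic derivatives generate on the left-hand side the two coercive time-integrated (``CK'') terms $\nu\|\nabla_t\omega\|_{H^N}^2$ and $\|\nabla_t\Delta_t^{-1}\omega\|_{H^N}^2$; the analogous $\theta$-weight produces $\eta\|\nabla_t\theta\|_{H^N}^2$. The bootstrap hypothesis is that on a maximal interval $[0,T)$ the two displayed quantities are bounded by $16\epsilon_1^2$ and $16\epsilon_2^2$, and the goal is to improve these to $4\epsilon_1^2$ and $4\epsilon_2^2$, so that $T=\infty$ follows by continuity. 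Differentiating the weighted energies yields, schematically,
\begin{align*}
  \tfrac12 \tfrac{d}{dt}\|\omega\|_{H^N}^2 + \nu\|\nabla_t\omega\|_{H^N}^2 + \|\nabla_t\Delta_t^{-1}\omega\|_{H^N}^2
  = \langle \p_x \theta, \omega\rangle_{H^N} - \langle v\cdot\nabla_t\omega,\omega\rangle_{H^N},
\end{align*}
together with the corresponding $\theta$-identity carrying the coupling term $-\alpha\langle \p_x\Delta_t^{-1}\omega,\theta\rangle_{H^N}$.

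The core of the argument is to absorb every right-hand side term into the CK terms and the small parameters. Since $\p_x$ annihilates the $x$-average, the buoyancy coupling only sees the nonzero modes $\theta_{\ne 0}$, and using $|\p_x|\le|\nabla_t|$ I would estimate $\|\theta_{\ne 0}\|_{L^2_t H^N}\le \|\nabla_t\theta\|_{L^2_t H^N}\le 4\epsilon_2/\sqrt{\eta}$ from the $\theta$-CK term; Cauchy--Schwarz in time then gives $\int_0^T\langle\p_x\theta,\omega\rangle_{H^N}\,dt\lesssim \tfrac{\epsilon_1\epsilon_2}{\sqrt{\eta\nu}}$, which the hypothesis $\epsilon_2\le\tfrac{1}{100}\sqrt{\eta}\sqrt{\nu}\,\epsilon_1$ renders negligible against $\epsilon_1^2$. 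Symmetrically, $\p_x\Delta_t^{-1}\omega=v^y$ is controlled in $L^2_t H^N$ by the inviscid-damping CK term, and pairing again only against $\theta_{\ne 0}$ makes the background term of size $\propto\alpha\epsilon_1\epsilon_2/\sqrt{\eta}$, which the threshold $\alpha<\eta^{1/2}\nu^{1/3}\epsilon_2/\epsilon_1$ (more than) absorbs. For the transport nonlinearity the top-order term $\langle v\cdot\nabla_t\partial^N\omega,\partial^N\omega\rangle$ vanishes by $\nabla_t\cdot v=0$, and the remaining commutator is bounded, using that $H^N$ is an algebra for $N\ge 5$ and keeping a genuine velocity factor, by $\|\omega\|_{L^\infty_t H^N}\|v\|_{L^2_t H^N}\|\nabla_t\omega\|_{L^2_t H^N}\lesssim \tfrac{\epsilon_1^3}{\sqrt{\nu}}$; here the stability threshold $\epsilon_1\le\tfrac{1}{100}\min(\nu,\eta)^{1/2}$ is exactly what forces this below $\epsilon_1^2$, and the same scheme handles $\langle v\cdot\nabla_t\theta,\theta\rangle_{H^N}$.

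I expect the main obstacle to be the nonlinear \emph{reaction} terms, where the slowly decaying $x$-average of the velocity interacts with a nonzero mode and threatens a loss of one derivative that is not recovered by the crude algebra estimate above. As in \cite{bedrossian2016sobolev} this is where the precise design of the ghost weight is essential: the weight must be tuned so that this derivative loss is paid for by the gain it itself generates in the enhanced-dissipation CK term, and verifying this balance while \emph{simultaneously} tracking the two-way buoyancy feedback $\omega\to\p_x\theta\to\theta\to\alpha v^y\to\omega$ is the delicate point, and is where the coupled smallness of $\alpha$ and $\epsilon_2/\epsilon_1$ enters most sharply. Once every contribution is shown to be at most $\tfrac{1}{10}$ of the left-hand side plus terms already bounded by the CK quantities, integrating in time and inserting the data bounds $\|\omega_0\|_{H^N}\le\epsilon_1$, $\|\theta_0\|_{H^N}\le\epsilon_2$ improves the bootstrap constants and yields the stated $8\epsilon_i^2$ estimates.
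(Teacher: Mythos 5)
Your proposal is correct and follows essentially the same route as the paper's proof: the ghost-multiplier energies of \cite{bedrossian2016sobolev} with viscous and inviscid-damping CK terms, a bootstrap that improves closed bounds of the form $C\epsilon_i^2$, the splitting of the velocity into its $x$-average and its complement, and perturbative absorption of the two buoyancy couplings using precisely the stated smallness of $\epsilon_1$, $\epsilon_2/\epsilon_1$ and $\alpha$. Your minor deviations — bounding the $\alpha$-term via the inviscid-damping CK control of $\p_x \Delta_t^{-1}\omega$ rather than the paper's enhanced-dissipation bound $\|\omega_{\neq}\|_{L^2_t H^N}\lesssim \nu^{-1/3}\epsilon_1$, and deferring the shear-average commutator (where your crude bound $\|v\|_{L^2_t H^N}$ would fail, since $v_{=}$ is only bounded in time) to the antisymmetry and multiplier properties of \cite{bedrossian2016sobolev} — are harmless and mirror the paper's own treatment.
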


\begin{thm}
  \label{thm:largealphaintro}
  Let $\alpha\geq 1$ and $\beta=1$ and $\nu>0$ and suppose that $\eta>2$.
  Let further $(\omega_0, \theta_0) \in H^N \times H^{N+1}$ be given initial
  data such that
  \begin{align*}
    \alpha\|\omega_0\|_{H^N}^2 + \|\nabla \theta_0\|_{H^N}^2 \leq \frac{1}{100} \epsilon^2.
  \end{align*}
  Then for all times $T>0$ it holds that
  \begin{align*}
    \esssup_{0\leq t \leq T} &(\alpha \|\omega(t) \|_{H^{N}}^2 +  \|\nabla_t \theta\|_{H^N}^2)\\
    &\quad + \nu \alpha \| \nabla_t \omega(t) \|_{L^2_t H^{N}}^2 \\
    &\quad + \eta \|\nabla_t \theta\|_{L^2_t H^N}^2 \leq \epsilon^2.
  \end{align*}
\end{thm}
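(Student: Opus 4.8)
The plan is to combine a weighted energy estimate with a continuity (bootstrap) argument, exploiting the exact cancellation of the buoyancy coupling that characterises hydrostatic balance. In the moving frame and in vorticity--temperature form the perturbation solves
\[
\p_t\omega + u\cdot\nabla_t\omega = \nu\Delta_t\omega + \p_x\theta,\qquad
\p_t\theta + u\cdot\nabla_t\theta + \alpha u_2 = \eta\Delta_t\theta,
\]
with $u=\nabla_t^{\perp}\Delta_t^{-1}\omega$ and $u_2=\p_x\Delta_t^{-1}\omega$. I would work with the $\alpha$-weighted energy $E(t)=\alpha\|\omega\|_{H^N}^2+\|\nabla_t\theta\|_{H^N}^2$, exactly the quantity appearing in the statement, together with the dissipation $D(t)=\nu\alpha\|\nabla_t\omega\|_{H^N}^2+\eta\|\nabla_t\nabla_t\theta\|_{H^N}^2$. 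Since for full dissipation the system is locally well-posed in $H^N\times H^{N+1}$ by standard parabolic theory, it suffices to propagate a bound on $E$.

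The first key step is the cancellation of the linear coupling. Differentiating $E$ in time, the buoyancy term in the vorticity equation contributes $\alpha\langle\omega,\p_x\theta\rangle_{H^N}$, while the stratification term $\alpha u_2$ in the temperature equation contributes $-\alpha\langle\nabla_t\theta,\nabla_t u_2\rangle_{H^N}$. Integrating by parts in $\nabla_t$ and using $\Delta_t u_2=\p_x\omega$ rewrites the latter as $-\alpha\langle\p_x\theta,\omega\rangle_{H^N}$, so the two contributions cancel identically. This is the hydrostatic-balance cancellation of \cite{doering2018long}, now performed one derivative higher and in the moving frame, and it is precisely what dictates weighting $\omega$ by $\alpha$ and measuring the temperature through $\nabla_t\theta$.

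The step I expect to be the main obstacle is the commutator created by the time-dependence of the frame. Because $\theta$ is measured through $\nabla_t\theta$, differentiating $\|\nabla_t\theta\|_{H^N}^2$ produces $\langle\nabla_t\theta,(\p_t\nabla_t)\theta\rangle_{H^N}=-\beta\langle(\nabla_t\theta)_2,\p_x\theta\rangle_{H^N}$, a term carrying neither smallness nor any obvious dissipative gain. At the $L^2$ level it equals $-\beta\sum_k\int k(\xi-t\beta k)\,|\hat\theta|^2\,d\xi$ (the $H^N$ level is identical, $\langle\nabla\rangle^N$ being a Fourier multiplier); it vanishes on the $x$-average $k=0$ and on the other modes is bounded by $\tfrac{\beta}{2}\|\nabla_t\theta\|_{H^N}^2$. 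Since every contributing mode has $|k|\ge 1$ and hence $-\Delta_t\ge 1$, the thermal dissipation controls it, $\eta\|\nabla_t\nabla_t\theta\|_{H^N}^2\ge\eta\|\nabla_t\theta\|_{H^N}^2$ on those modes, so the commutator is absorbed once $\eta$ exceeds an explicit constant. This is exactly where the hypothesis $\eta>2$ is used, the margin above $\beta/2$ leaving room for the nonlinear remainders; the $k=0$ modes decouple from the buoyancy (both $\p_x\theta$ and $u_2$ vanish) and decay by pure diffusion.

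Finally the transport nonlinearities $u\cdot\nabla_t\omega$ and $u\cdot\nabla_t\theta$ are treated as errors. Their top-order parts vanish by incompressibility $\nabla_t\cdot u=0$, and the remaining commutators I would bound by Moser/product estimates together with $\|\nabla_t u\|_{H^N}\lesssim\|\omega\|_{H^N}$ (as $\nabla_t\nabla_t^{\perp}\Delta_t^{-1}$ is a zeroth-order multiplier), using interpolation to borrow a dissipative factor where time-integrability is needed. Under the bootstrap hypothesis $E\le\epsilon^2$ the weight $\alpha\ge 1$ forces $\|\omega\|_{H^N}\lesssim\epsilon/\sqrt{\alpha}$, so these terms are cubic in $\epsilon$ and absorbable into $cD$. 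Collecting everything yields a differential inequality $\tfrac{d}{dt}E+c\,D\le C\epsilon\,(E+D)$; integrating from $E(0)\le\tfrac{1}{100}\epsilon^2$ and running the continuity argument closes the bootstrap, provides global existence, and delivers the asserted bound on $\esssup_t E$ together with the dissipation integrals $\nu\alpha\|\nabla_t\omega\|_{L^2_tH^N}^2$ and $\eta\|\nabla_t\theta\|_{L^2_tH^N}^2$.
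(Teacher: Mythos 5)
Your strategy is essentially the paper's own (Theorem \ref{thm:largealpha}): the same $\alpha$-weighted energy $\alpha\|\omega\|_{H^N}^2+\|\nabla_t\theta\|_{H^N}^2$, the same hydrostatic cancellation performed one derivative up (and note your sign for the stratification term, $+\alpha u_2$ on the left, is the one under which the cancellation actually holds, consistent with \eqref{eq:linear}; the paper's \eqref{eq:B} carries a sign typo), the same identification of the frame commutator $\frac{d}{dt}(-\Delta_t)=2\p_x(\p_y-t\p_x)$ as the dangerous linear term and its absorption into the thermal dissipation using $\eta>2$, and the same bootstrap/continuity scheme. The one structural difference is that the paper runs the whole argument through the Bedrossian--Vicol--Wang multiplier $A=M\langle D\rangle^N$, so that the shear-transport commutator (the $v_=\p_x$ piece) and the velocity $v_{\neq}$ are controlled through $\sqrt{-\dot{M}M}$ and \eqref{eq:17} with only $\nu^{-1/3}$-type losses; you drop $M$ and must instead rely on the Poincar\'e inequality on the nonzero modes (the symbol of $-\Delta_t$ is at least $k^2\geq 1$ for $k\neq0$) together with the viscous dissipation, which costs larger inverse powers of $\nu$. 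Since the theorem does not make the required $\epsilon$--$\nu$ smallness explicit, this simplification is legitimate, though quantitatively weaker.

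There is, however, one step that as written would fail: the closing differential inequality $\frac{d}{dt}E+cD\leq C\epsilon(E+D)$. Gr\"onwall applied to this gives only $E(t)\leq E(0)e^{C\epsilon t}$, i.e.\ exponential growth, not the uniform-in-$T$ bound $\esssup_{0\leq t\leq T}E\leq\epsilon^2$ claimed in the theorem. The point is that $E$ itself is neither time-integrable nor controlled by $D$: the $x$-averages $\omega_=$ and $(\nabla_t\theta)_=$ live on the $k=0$ modes, where there is no Poincar\'e inequality on $\T\times\R$ and no enhanced damping, so an error term proportional to $\epsilon E$ accumulates over long times. The fix is available within your own argument and is what the paper does: every cubic error term contains at least \emph{two} factors that are either gradients or supported on nonzero modes ($u_{\neq}$, $\omega_{\neq}$, $\p_x\theta$, etc.), hence square-integrable in time via dissipation plus Poincar\'e; bounding each term by $(\sup_t E^{1/2})$ times two such $L^2_t$ factors yields a bound of the form $C\epsilon\nu^{-1}\alpha^{-1/2}D$ (and analogues in $\eta$), so that for $\epsilon$ small the inequality becomes $\frac{d}{dt}E+\frac{c}{2}D\leq 0$ with no $E$ on the right, and the bootstrap closes uniformly in time. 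Your earlier claim that the nonlinear remainders are ``absorbable into $cD$'' already asserts exactly this, so the $E$ on the right-hand side of your final inequality is spurious and should be removed after verifying the absorption term by term.
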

For a discussion of the assumptions see Section \ref{sec:fulldissipation}.

As a corollary we derive exponential decay rates and enhanced dissipation (later
restated as Proposition \ref{prop:decayrates}).
\begin{prop}
  \label{prop:decayratesintro}
  Let $N, \alpha, \epsilon_1, \epsilon_2$ be as in Theorem \ref{thm:nonlinearintro}.
  Then the nonlinear Boussinesq equations further satisfy
  \begin{align*}
    \|\omega(t)\|_{H^N} + \|\theta(t)\|_{H^N} \leq 2 C \exp(- \min(\nu, \eta)^{1/3} t/10) (\|\omega_0\|_{H^N} + \|\theta_0\|_{H^N})
  \end{align*}
  for all $t>0$. In particular, we observe
  dissipation on a time scale $\min(\nu, \eta)^{-1/3}$ faster than heat flow. We
  say that the equations exhibit \emph{enhanced dissipation}.
\end{prop}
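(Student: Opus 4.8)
The plan is to upgrade the space-time bounds of Theorem \ref{thm:nonlinearintro} to pointwise-in-time exponential decay by converting the time-integrated dissipation into a genuine rate through an enhanced dissipation estimate for the operator $\nabla_t$. Since $\p_x$ commutes with the linearized dynamics in the moving frame, I would first decompose $\omega$ and $\theta$ into their $x$-Fourier modes and treat each frequency $k \in \Z$ separately, isolating the nonzero modes $k \neq 0$, which are mixed by the shear, from the zero mode $k=0$, which is not.

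For the nonzero modes the key algebraic input is the lower bound
\begin{align*}
  \nu \int_s^t \big( k^2 + (\xi - \beta \tau k)^2 \big)\, d\tau \geq c\, \nu^{1/3}(t-s) - C,
\end{align*}
valid for all $|k| \geq 1$ with $\beta = 1$, which follows by minimizing the quadratic-in-$\tau$ integrand over the placement of the window and optimizing its length at the critical scale $\tau \sim \nu^{-1/3}$. Since $|\nabla_t|^2$ has exactly the symbol $k^2 + (\xi - \beta t k)^2$, this shows that the propagator of the dissipative linear part decays like $e^{-c\nu^{1/3}(t-s)}$ on the nonzero modes, with the analogous $\eta^{1/3}$ bound for the temperature equation. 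Because the buoyancy term couples $\omega$ and $\theta$, the two rates cannot be separated and the combined system inherits the slower rate $\min(\nu,\eta)^{1/3}$, which explains the exponent appearing in the statement.

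With the enhanced linear decay in hand, I would close the argument in the mild (Duhamel) formulation. Writing the system as $\dt w = \mathcal{L}_t w + \mathcal{N}(w)$ with $w = (\omega,\theta)$, a quadratic nonlinearity $\mathcal{N}$ and linear propagator $S(t,s)$, the variation of constants formula expresses $w(t)$ through $S(t,s)$ and $\mathcal{N}$. Applying the bound $\|S(t,s) w\|_{H^N} \lesssim e^{-c \min(\nu,\eta)^{1/3}(t-s)} \|w\|_{H^N}$ on the projection onto the frequencies $k \neq 0$, and using that, by Theorem \ref{thm:nonlinearintro}, the factors entering $\mathcal{N}$ are of size $\epsilon$ in the relevant space-time dissipation norms, the nonlinear Duhamel contribution is a small perturbation; a continuity and Gr\"onwall argument then yields the stated exponential decay of $\|\omega\|_{H^N} + \|\theta\|_{H^N}$.

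The main obstacle is the zero mode $k = 0$. There $\nabla_t = \p_y$ reduces to ordinary vertical dissipation, which on the infinite channel $\T \times \R$ lacks a spectral gap, so heat flow alone does not furnish a uniform exponential rate. I expect the hardest part of the argument to be showing that the zero mode nevertheless decays at the claimed rate: this forces one to exploit the specific structure of the nonlinear zero-mode forcing, which is assembled from products of the fast-decaying nonzero modes, together with the buoyancy coupling, rather than relying on the linear dissipation of the zero mode itself. Making this quantitative and uniform as the diffusivities tend to zero is, I expect, the genuine crux of the proof.
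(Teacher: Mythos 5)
Your argument is, in substance, the paper's own proof of Proposition \ref{prop:decayrates} in Section \ref{sec:bounds}: treat the quadratic terms as a forcing of the linearized system, write the solution by Duhamel's formula with the linearized propagator $S(t,\tau)$, use the enhanced dissipation bound $\|S(t,\tau)\| \lesssim \exp(-c\min(\nu,\eta)^{1/3}(t-\tau))$ --- which the paper obtains in Section \ref{sec:couette} from exactly the Fourier-side estimate $\int_s^t (\xi-k\tau)^2\,d\tau \geq k^2(t-s)^3/12$ you invoke --- and close a bootstrap in which the Duhamel term is controlled through the $L^\infty_t H^N$ and $L^2_t H^N$ norms supplied by Theorem \ref{thm:nonlinearintro}, with Cauchy--Schwarz in time to exploit the $L^2_t$ gradient bounds. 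For the nonzero frequencies, your outline and the paper's proof coincide step by step.

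The difference lies in the zero mode, and there your diagnosis is correct but your proposed escape route is not. The free heat evolution $e^{\nu t \p_y^2}\omega_{=}(0)$ on $\R$ of a nontrivial initial $x$-average decays at no uniform exponential rate (concentrate the Fourier transform of $\omega_{=}(0)$ near the frequency $\xi=0$), so no analysis of the nonlinear zero-mode forcing can recover the claimed inequality for the full $H^N$ norms: as literally stated, the bound fails for generic data with $\omega_{=}(0)\neq 0$. Even for data with vanishing averages, the nonlinearity regenerates an $O(\epsilon^2)$ zero mode whose only damping is non-enhanced heat flow, so it too resists a uniform exponential rate. What the Duhamel/bootstrap argument genuinely proves is decay of $\omega_{\neq}$ and $\theta_{\neq}$, exactly as in the Navier--Stokes result of Bedrossian--Vicol--Wang, where enhanced dissipation is stated for the nonzero modes only. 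Note that the paper does not resolve this issue either: its Proposition \ref{prop:decayrates} simply hypothesizes that $S(t,\tau)$ decays exponentially on all of $H^N\times H^{N+1}$, asserting that this ``is established in Section \ref{sec:couette}'', whereas the linear results there (Theorem \ref{thm:stabledamping}) give exponential or enhanced decay only away from the $x$-averages, which evolve by one-dimensional heat flow. So the crux you identified is real, but the honest resolution is to restrict the statement to the nonzero modes rather than to search for a nonlinear mechanism; once restricted, your proof is the paper's proof.
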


The nonlinear Boussinesq equations in particular with partial dissipation have
been studied in numerous previous works, e.g. \cite{larios2013global,li2016global,doering2018long} (see the introduction and Section
\ref{sec:fulldissipation} for a discussion). Our main differences and novelties here are:
\begin{itemize}
\item We consider the effects of a linear shear and hydrostatic balance at the
  same time. In particular, the effects of mixing by a shear flow and the resulting enhanced
  dissipation of the velocity field and the interaction of shear and hydrostatic
  balance have, to our knowledge, not previously been studied for the Boussinesq equations.
\item Our results concern higher regularity and decay rates near combinations of
  shear flow and hydrostatic balance.
  In contrast, well-posedness and asymptotic stability results such as
  \cite{doering2018long,larios2013global,chae1999local}
   focus on perturbations of $(0,0)$ and energies at the level of $L^2 \times
   H^1$ or make use of energy functionals of the type $\alpha \|v\|_{L^2}^2 +
   \|\theta\|_{L^2}^2$ (for hydrostatic balance \emph{without} shear).
\item In particular, our results are stable under the limit $\alpha\downarrow 0$
  and incorporate mixing enhanced dissipation rates.
\end{itemize}

In addition to the nonlinear results obtained in Theorems \ref{thm:nonlinearintro}
and \ref{thm:largealphaintro} we
also study the linearized setting around more general profiles of the form
\eqref{eq:1}, derive finer characterizations of asymptotics and are in
particular interested in the effects of partial dissipation.

More precisely, we ask how little dissipation is required for asymptotic
stability of the vorticity and the temperature to hold and how this is
influenced by shear and hydrostatic balance, respectively.
In this context we mention numerous previous works by J. Wu and coauthors on
related (sub)settings \cite{tao20192d,wu2019stability} (see Section
\ref{sec:couette} for a longer discussion). 
Our main results are collected in Theorem \ref{thm:summarylinear}, which we
restate here, and are derived in the
subsections of Section \ref{sec:couette}.

We recall that for $\beta\neq 0$ we consider the perturbations in coordinates
moving with the shear flow:
\begin{align*}
  \omega(t, x+ \beta t y, y), \theta(t, x+t \beta y, y).
\end{align*}
\begin{thm}
  \label{thm:summarylinearintro}
  Consider the linearized problem \eqref{eq:linear} around the state
  \begin{align*}
    v^*=(\beta y,0), \ \theta^* = \alpha y,
  \end{align*}
  with initial data $\omega_0 \in H^N$ and $\theta_0 \in H^{N+1}$, $N \in \N$.

  In the inviscid case the evolution of the vorticity $\omega$ is unstable in
  the sense that
  \begin{align*}
    \limsup_{t\rightarrow \infty} \|\omega(t)\|_{H^N}=\infty,
  \end{align*}
  unless $\alpha>0, \beta=0$ or $\alpha=0$ and $\p_x \theta$ is trivial.

  If $\alpha=0$ the evolution of the vorticity is asymptotically stable if at
  least one diffusion coefficient is non-zero. More precisely, for every $N \in
  \N$ there exists $C=C(\eta_x, \eta_y, N)$ such that the temperature
  density satisfies
  \begin{align*}
    \|\theta(t)-\int \theta(t)dx \|_{H^N} \leq C \exp\left(-\eta_x t - \eta_y \frac{t^3}{12}\right) \|\theta_0\|_{H^N}.
  \end{align*}
  Furthermore, there exists $C=C(\eta_x, \eta_y, \nu_x, \nu_y,N)$ and a profile $\omega_1(t)$ (see Theorem
  \ref{thm:stabledamping} for a detailed description) such that
  \begin{align*}
    \|\omega(t)-\omega_1(t)\|_{H^N}\leq C \exp\left(-\max(\eta_y,\nu_y)\frac{t^3}{12} - \max(\eta_x, \nu_x)t\right) (\|\omega_0\|_{H^N} + \|\p_x \theta_0\|_{H^N})
  \end{align*}
  and $\omega_1(t)$ satisfies
  \begin{align*}
    \|\omega_1(t)\|_{H^N} \leq C(\nu_x,\nu_y,\eta_x,\eta_y) \exp(-\min(\eta_y,\nu_y)\frac{t^3}{12} - \min(\eta_x, \nu_x)t/2) (\|\omega_0\|_{H^N} + \|\p_x \theta_0\|_{H^N}).
  \end{align*}

  Finally, let $\alpha>0$ and suppose that at least one of $\min(\nu_x, \eta_x)$
  or $\min(\nu_y, \eta_y)$ is positive. Then it holds that
  (see Propositions \ref{prop:woshear} and \ref{prop:wshear})
  \begin{align*}
   E(t):= \alpha \|\omega(t)\|_{H^N}^2 + \|\p_x \theta(t) \|_{H^N}^2 + \|(\p_y-t \beta \p_x) \theta(t)\|_{H^N}^2.
  \end{align*}
  is bounded uniformly in time. Furthermore, if $\beta=1$ (or more generally
  $\beta\neq 0$) we obtain the enhanced dissipation estimates:
  \begin{align}
    \label{eq:28}
   E(t)\leq C (1+t^2) \exp\left(-\min(\nu_y, \eta_y)\frac{t^3}{12}- \min(\eta_x, \nu_x)t\right) E(0).
  \end{align}
\end{thm}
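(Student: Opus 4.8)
The plan is to prove the final statement of Theorem \ref{thm:summarylinearintro} — the $\alpha>0$ case with the energy functional $E(t)$ — by an energy method built directly on the structure of the linearized system in the moving frame. The key observation is that the combination $\alpha\|\omega\|^2 + \|\partial_x\theta\|^2 + \|(\partial_y - t\beta\partial_x)\theta\|^2$ is not an arbitrary choice: the cross terms arising from the buoyancy force (the $\theta$ contribution to the velocity equation) and from the transport of the background temperature gradient $\alpha y$ (which feeds $\alpha$ times a velocity component into the $\theta$ equation) are designed to cancel. Concretely, I would first write the linearized equations \eqref{eq:linear} in the sheared coordinates, so that advection by the background shear is absorbed into the time-dependent gradient $\nabla_t$ and Laplacian $\Delta_t$, and the velocity is recovered from vorticity via $v = \nabla_t^\perp \Delta_t^{-1}\omega$. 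I would then differentiate $E(t)$ in time, treating each Sobolev norm via its Fourier symbol or, equivalently, by applying $\langle\nabla\rangle^N$ and working in $L^2$.

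Next I would carry out the differentiation term by term. The dissipative terms $\nu_x\partial_x^2$, $\nu_y\partial_y^2$ (acting on $\omega$) and $\eta_x\partial_x^2$, $\eta_y\partial_y^2$ (acting on $\theta$) produce nonpositive contributions $-2\nu\|\partial_x\omega\|^2 - \ldots$ after integration by parts, and in the moving frame the vertical derivative becomes $(\partial_y - t\beta\partial_x)$, which is precisely why the time-growing symbol $(\xi_2 - t\beta\xi_1)^2$ appears and produces the $t^3$ decay upon integration. The coupling terms are the heart of the matter: the buoyancy term contributes something proportional to $\langle \omega, \partial_x\Delta_t^{-1}\theta\rangle$-type pairings to $\frac{d}{dt}(\alpha\|\omega\|^2)$, while the transport of the background gradient contributes a matching term to $\frac{d}{dt}\|\partial_x\theta\|^2$ and to the $\frac{d}{dt}\|(\partial_y-t\beta\partial_x)\theta\|^2$ piece. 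The weight $\alpha$ in front of $\|\omega\|^2$ and the explicit appearance of $\partial_x\theta$ and $(\partial_y-t\beta\partial_x)\theta$ (rather than $\|\theta\|^2$ itself) are calibrated so these coupling terms either cancel identically or are absorbed into the dissipation via Young's inequality; this is the analogue of the cancellation used in \cite{doering2018long} for hydrostatic balance, now combined with the shear. I would verify this cancellation carefully, keeping the $\beta$-dependent terms that arise from differentiating the explicitly time-dependent operator $(\partial_y - t\beta\partial_x)$.

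The third step is to extract the decay rate \eqref{eq:28}. Once $\frac{d}{dt}E(t) \leq -(\text{dissipative quadratic form})$ is established with the coupling terms controlled, I would argue as follows: the dissipative terms control $\|\partial_x(\cdot)\|^2$ (giving exponential decay at rate $\min(\eta_x,\nu_x)t$ after the spatial average in $x$ is removed) and $\|(\partial_y - t\beta\partial_x)(\cdot)\|^2$. For the enhanced $t^3$ rate when $\beta\neq 0$, the standard mechanism is that even for a fixed nonzero horizontal frequency $\xi_1$, the symbol $(\xi_2 - t\beta\xi_1)^2$ spends only an $O(|\beta\xi_1|^{-1/3})$-length interval of time near its minimum, and integrating $\exp(-\min(\nu_y,\eta_y)\int_0^t (\beta\xi_1)^2 s^2\,ds)$ against the evolution yields the $\exp(-c\,t^3)$ factor together with the polynomial prefactor $(1+t^2)$ that accounts for the worst frequencies; I would make this rigorous by a frequency-by-frequency (Fourier-in-$x$) ODE comparison, treating each horizontal mode $\xi_1 \neq 0$ separately and handling $\xi_1 = 0$ by the pure $x$-independent dissipation. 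Uniform boundedness of $E(t)$ when only $\min(\nu_x,\eta_x)$ or $\min(\nu_y,\eta_y)$ is positive follows from the same energy inequality by simply dropping the decay and using $\frac{d}{dt}E \leq 0$ up to controlled lower-order terms.

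The main obstacle I anticipate is establishing the coupling cancellation cleanly at the level of $H^N$ norms in the moving frame, because the operator $(\partial_y - t\beta\partial_x)$ is time-dependent and does not commute with $\Delta_t^{-1}$ or with $\langle\nabla\rangle^N$ in a way that leaves the cross terms exactly antisymmetric. In particular, differentiating $\|(\partial_y - t\beta\partial_x)\theta\|_{H^N}^2$ produces an extra term $-2\beta\langle (\partial_y - t\beta\partial_x)\theta, \partial_x\theta\rangle_{H^N}$ from the explicit time dependence, and I expect the crux of the proof to be showing that this term, together with the buoyancy and background-transport couplings, organizes into a sign-definite or dissipation-dominated quantity. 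Tracking these $\beta$-dependent cross terms and verifying that the hypothesis (positivity of one of $\min(\nu_x,\eta_x)$, $\min(\nu_y,\eta_y)$) suffices to close the estimate — rather than requiring all four coefficients — is where the genuine work lies.
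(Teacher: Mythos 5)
Your proposal addresses only the final clause of the theorem (the $\alpha>0$ energy estimate for $E(t)$); the inviscid instability statement and the entire $\alpha=0$ part --- the decomposition $\omega=\omega_1+(\omega-\omega_1)$, whose proof in Theorem \ref{thm:stabledamping} requires the explicit Duhamel representation of $\tilde\omega$ and a case-by-case construction of $\omega_1$ as a limit of the forcing integral --- are not addressed at all, so as a proof of the stated theorem it is incomplete. For the part you do treat, your setup coincides with the paper's (Propositions \ref{prop:woshear} and \ref{prop:wshear}): pass to the sheared frame, work frequency by frequency, and use the weights $\sqrt{\alpha}$ and $\sqrt{k^2+(\xi-kt)^2}$ so that the two $\alpha$-coupling terms become equal, purely imaginary off-diagonal entries and cancel exactly in the energy identity; the dissipation then contributes $\exp(-\min(\nu_x,\eta_x)k^2t-\min(\nu_y,\eta_y)k^2t^3/12)$ via the pointwise bound $\int_0^t(\xi-k\tau)^2\,d\tau\ge k^2t^3/12$, which holds uniformly in $\xi$ --- so, contrary to your third paragraph, no polynomial prefactor arises from ``worst frequencies'' in the dissipation mechanism.

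The genuine gap sits exactly at the point you flag as the crux: the cross term produced by differentiating the time-dependent weight, $-2\beta\langle(\p_y-t\beta\p_x)\theta,\p_x\theta\rangle_{H^N}$, i.e.\ in Fourier the coefficient $\tfrac{2(t-\xi/k)}{1+(t-\xi/k)^2}$ multiplying the energy density $(k^2+(\xi-kt)^2)|\tilde\theta|^2$. Your proposed resolution --- that this term be sign-definite or absorbable into the dissipation --- fails under the theorem's hypotheses: the term is positive (destabilizing) for $t>\xi/k$, and its size relative to the energy density reaches the absolute constant $1$ at $t=\xi/k+1$ independently of all parameters, whereas the relative dissipation rate at such times is of order $\min(\nu_x,\eta_x)k^2+\min(\nu_y,\eta_y)k^2$, which the hypotheses allow to be arbitrarily small (indeed possibly with $\nu_y=\eta_y=0$). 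For the same reason, your claim that uniform boundedness follows from ``$\frac{d}{dt}E\le 0$ up to controlled lower-order terms'' is not available when $\beta\neq 0$. The paper's resolution is different and is precisely what produces the $(1+t^2)$ prefactor in \eqref{eq:28}: the cross-term coefficient is an exact time derivative, $\frac{d}{dt}\ln\bigl(1+(t-\xi/k)^2\bigr)$, so its contribution to Gr\"onwall's inequality is $\exp\bigl(\int_0^T\max(0,\cdot)\,dt\bigr)\le 1+T^2$, a polynomial loss only; the exponential dissipation factors, present as soon as one of $\min(\nu_x,\eta_x)$, $\min(\nu_y,\eta_y)$ is positive, then dominate this loss, giving both \eqref{eq:28} and uniform boundedness. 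Without this integration-in-time observation your energy inequality does not close.
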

We remark that in the inviscid problem a natural regularity class is given by the
Gevrey class $\mathcal{G}_2$ (see \cite{dengZ2019,zillinger2019linear,jia2019linear,dengmasmoudi2018}).
As observed in \cite{zillinger2019linear} stability in Gevrey classes can be
derived as a corollary of quantitative control in Sobolev spaces.
For simplicity of notation and as an example we state such a corollary for the
case $\alpha>0, \beta=0$ of Theorem \ref{thm:summarylinearintro} (see also Proposition \ref{prop:woshear}).
\begin{cor}
\label{cor:Gevrey}
  Let $\alpha>0$, $\beta=0$ and let $\nu_x, \eta_x,\nu_y, \eta_y\geq 0$ be
  given.
  Suppose that $\omega_0, \theta_0$ are in the Gevrey class $\mathcal{G}_2$,
  that is there exists $R_1, R_2>0$ such that for all $j \in \N$
  \begin{align*}
    \|\omega_0\|_{H^j}\leq R_2^{1+j} (j!)^{2j}, \\
    \|\nabla \theta_0\|_{H^j}\leq R_2^{1+j} (j!)^{2j}.
  \end{align*}
  Then there exist $R_3, R_4>0$, which depend on $R_1, R_2$ and $\alpha$ such
  that the solution of the linearized Boussinesq equations $(\omega,\theta)$
  satisfies 
    \begin{align*}
    \|\omega(t)\|_{H^j}\leq R_3^{1+j} (j!)^{2j}, \\
    \|\nabla \theta(t)\|_{H^j}\leq R_4^{1+j} (j!)^{2j}.
  \end{align*}
  for all times $t>0$ and all $j \in \N$.
\end{cor}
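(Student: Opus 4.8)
The plan is to deduce the Gevrey bound directly from the Sobolev energy estimate of Proposition~\ref{prop:woshear}, exploiting that in the case $\beta=0$ the relevant constant is \emph{independent of the Sobolev index}. Concretely, write $M_j := R_2^{1+j}(j!)^{2j}$ for the Gevrey majorant and set, for each $j \in \N$,
\begin{align*}
  E_j(t) := \alpha \|\omega(t)\|_{H^j}^2 + \|\nabla\theta(t)\|_{H^j}^2,
\end{align*}
which for $\beta=0$ agrees with the energy $E$ of Theorem~\ref{thm:summarylinearintro} at regularity $j$. The key claim is that $E_j(t) \leq E_j(0)$ for every $j$ and every $t>0$ with a universal constant; granting this, the corollary follows by a one-line re-indexing.

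First I would establish the $j$-independent energy inequality. Since $\beta=0$, the linearized system has constant coefficients, so the operators $\p_x$, $\p_y$ and $\Delta^{-1}$ commute with the evolution and with the $H^j$ inner product. Differentiating $E_j$ in time and using the equations, the purely dissipative contributions (carrying the factors $\nu_x,\nu_y,\eta_x,\eta_y\geq 0$) are non-positive, while the two coupling terms — the buoyancy contribution from the vorticity equation and the advection contribution from the temperature equation — are equal and opposite, as one sees after integrating by parts and using the identity $\Delta\p_x\Delta^{-1}=\p_x$. This is exactly the cancellation underlying the definition of $E$, and it holds verbatim at every regularity level $j$ because all operators involved commute with $\langle\nabla\rangle^j$. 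Hence $\frac{d}{dt}E_j(t)\leq 0$ and $E_j(t)\leq E_j(0)$ with constant $1$, uniformly in $j$.

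It then remains to re-index. From $E_j(t)\leq E_j(0)$ and the hypotheses $\|\omega_0\|_{H^j},\|\nabla\theta_0\|_{H^j}\leq M_j$ we obtain
\begin{align*}
  \alpha\|\omega(t)\|_{H^j}^2 + \|\nabla\theta(t)\|_{H^j}^2 \leq \alpha\|\omega_0\|_{H^j}^2 + \|\nabla\theta_0\|_{H^j}^2 \leq (\alpha+1) M_j^2,
\end{align*}
so that $\|\omega(t)\|_{H^j}\leq \sqrt{1+1/\alpha}\,M_j$ and $\|\nabla\theta(t)\|_{H^j}\leq \sqrt{\alpha+1}\,M_j$. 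Absorbing the fixed prefactors into the base constant — taking $R_3 := R_2\max(1,\sqrt{1+1/\alpha})$ and $R_4 := R_2\max(1,\sqrt{\alpha+1})$, so that $\sqrt{1+1/\alpha}\,R_2^{1+j}\leq R_3^{1+j}$ and likewise for $R_4$ — yields the claimed bounds $\|\omega(t)\|_{H^j}\leq R_3^{1+j}(j!)^{2j}$ and $\|\nabla\theta(t)\|_{H^j}\leq R_4^{1+j}(j!)^{2j}$ for all $t>0$ and all $j$. The resulting constants depend only on $R_1, R_2$ and $\alpha$, as asserted.

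The argument has no genuine obstacle in the case $\beta=0$; its entire content is the \emph{uniformity in $j$} of the energy estimate, which is what makes the passage from Sobolev to Gevrey control free of any loss. This is precisely why the corollary is stated for $\beta=0$: in the presence of shear ($\beta\neq 0$) the corresponding bound \eqref{eq:28} carries a time-dependent prefactor $(1+t^2)$ and a constant $C$ whose dependence on $N$ would have to be tracked quantitatively, so that preserving the Gevrey majorant would require controlling how the enhanced-dissipation estimate degrades as the regularity index grows.
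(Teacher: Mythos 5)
Your proof is correct and takes essentially the same route as the paper: both rest on the observation that for $\alpha>0$, $\beta=0$ the energy estimate $\alpha\|\omega(t)\|_{H^j}^2+\|\nabla\theta(t)\|_{H^j}^2\leq \alpha\|\omega_0\|_{H^j}^2+\|\nabla\theta_0\|_{H^j}^2$ of Proposition \ref{prop:woshear} (Theorem \ref{thm:summarylinearintro}) holds with a constant independent of the Sobolev index $j$, after which the Gevrey bounds follow by absorbing the fixed $\alpha$-dependent prefactors into the base constants $R_3,R_4$. The only cosmetic difference is that the paper invokes the theorem directly (keeping the decay factor $c(t)\leq 1$) and packages the constants via $N$-th roots, while you re-derive the anti-symmetry cancellation and absorb the prefactor $\sqrt{1+1/\alpha}$, resp.\ $\sqrt{\alpha+1}$, multiplicatively into the base.
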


\begin{proof}[Proof of Corollary \ref{cor:Gevrey}]
  By Theorem \ref{thm:summarylinearintro} we know that
  \begin{align*}
    & \quad \alpha \|\omega(t)\|_{H^N}^2 + \|\nabla \omega(t)\|_{H^N}^2 \\
    &\leq c(t) (\alpha \|\omega_0\|_{H^N}^2 + \|\nabla \omega_0\|_{H^N}^2)\\
    &\leq c(t) (\alpha R_1^{2N} + R_2^{2N})(j!)^{4},
  \end{align*}
  where
  \begin{align*}
    c(t)= \exp(-\min(\nu_x, \eta_x)t - (\min(\nu_y,\eta_y)\frac{t^3}{8})) \leq 1.
  \end{align*}
  Thus we may choose
  \begin{align*}
    R_3&\geq \sqrt[N]{c(t)(R_1^{2N} + \alpha^{-1}R_2^{2N})}, \\
    R_4&\geq \sqrt[N]{c(t)(\alpha R_1^{2N} +R_2^{2N})},
  \end{align*}
  which can be controlled in terms of $2\max(\alpha, \alpha^{-1})\max(R_1,R_2)$.
\end{proof}
  We remark that more generally it suffices to establish a bound of the form
  \begin{align*}
    \|(\omega(t), \theta(t))\|_{H^N \times H^{N+1}}\leq c^{N} \|(\omega_0, \theta_0)\|_{H^N \times H^{N+1}},
  \end{align*}
  for some constant $c$ independent of $N$ (see Section \ref{sec:couette} for
  several estimates of this type).
  Furthermore, due to the change of coordinates associated with $\beta$, in the general
  case one may obtain estimates of the form
  \begin{align*}
    \|\theta(t)\|_{H^N}\leq \|\nabla_t \theta\|_{H^N}\leq c^N \|\nabla \theta_{0}\|_{H^N}\leq c^N \|\theta_0\|_{H^{N+1}},
  \end{align*}
  which ``lose'' one derivative. For this reason general Gevrey estimates either
  need to track spaces more precisely or allow for losses in $R$ or the Gevrey
  class exponent with time. As this is not a focus of the article, we opted to
  only state a simple result.

The remainder of the article is organized as follows:
\begin{itemize}
\item In Section \ref{sec:notation} we introduce notational conventions used
  throughout the article.
\item In Section \ref{sec:couette} we consider the linearized problem around
  the two parameter family \eqref{eq:1}. Here a particular focus is placed on the
  problem of partial dissipation and we show that if even just one dissipation
  coefficient is non-trivial asymptotic stability results hold.
  Furthermore we discuss how the interaction of shear flow and hydrostatic
  balane influence (mixing enhanced) dissipation rates.
\item In Section \ref{sec:fulldissipation} we discuss the nonlinear problem with
  full dissipation. In a first result we adapt the approach Bedrossian, Vicol and Wang
  \cite{bedrossian2016sobolev} used for the Navier-Stokes problem to the
  Boussinesq equations and establish stability in Sobolev regularity for small
  data and small slope $\alpha\geq 0$ of the hydrostatic balance. We then
  combine these tools with additional cancellation properties of the Boussinesq
  equations with hydrostatic balance (see \cite{doering2018long}) to treat the
  case of ``large'' $\alpha$. 
\item In Section \ref{sec:bounds} we show that the nonlinear stability results
  of Section \ref{sec:fulldissipation} combined with the estimates on the linear
  problem obtained in Section \ref{sec:couette} yield nonlinear (enhanced)
  dissipation estimates.
\end{itemize}

\subsection{Notation}
\label{sec:notation}
In the study of both the linearized and nonlinear Boussinesq equations we make
extensive use of the Fourier transform. We denote the Fourier transform of a
function $f(x,y) \in L^2 (\T \times \R)$ by
\begin{align*}
\tilde{f}(k,\xi):= (\mathcal{F}_{x,y}f)(k,\xi)
\end{align*}
with $k \in \Z$ being discrete and $\xi \in \R$.

In our analysis the $x$-average, $k=0$, plays a distinct role in that it might be conserved
or decay slower than its $L^2$-orthogonal complement.
We thus denote
\begin{align*}
  f_=(y)= \int_{0}^1 f(x,y)dx
\end{align*}
and its complement
\begin{align*}
  f_{\neq}(x,y)= f(x,y)-f_{=}(y).
\end{align*}
As related notation in Section \ref{sec:fulldissipation} we split a nonlinear
sum of integrals $\mathcal{T}$ into contributions $\mathcal{T}^=$ involving the
$x$-average of the velocity (which is a shear flow) and its complement
$\mathcal{T}^{\neq}$.

In this article our main object of interest is the evolution of perturbations
around the stationary states given by the two parameter family \eqref{eq:1}.
Hence, with slight abuse of notation we use $\omega, v$ and $\theta$ to refer to
the \emph{perturbation} of the vorticity, velocity and temperature (instead of the full
solution).
Similarly, when $\beta \neq 0$ it is natural to work in coordinates moving with
the flow and consider
\begin{align*}
  \omega(t, x+t \beta y, y), v(t, x+t \beta y, y), \theta(t, x+t \beta y, y),
\end{align*}
as well as Sobolev spaces with respect to these coordinates.

In Section \ref{sec:fulldissipation} we consider spaces of the form $L^p((0,T);
H^N)$ or $L^p((0,\infty), H^N)$, which we abbreviate as $L^p_tH^N$.
In some asymptotic estimates we denote universal constants, which do not depend
on the quantities under consideration, by $C>0$. These constants may change from
line to line.
Similarly, we write $a \ll b$ if there exists a small universal constant ($C\leq
\frac{1}{100}$ for our purposes) such that $|a|\leq C |b|$.

\section{The Linearized Problem around Couette Flow and Hydrostatic Balance}
\label{sec:couette}
In this section we consider the linearized two-dimensional Boussinesq equations
on $\T \times \R$ near the two-parameter family of stationary solutions
\begin{align*}
  v=(\beta y, 0),\  \theta=\alpha y
\end{align*}
and with possibly partial dissipation:
\begin{align}
  \label{eq:linear}
  \begin{split}
  \dt \omega + \beta y \p_x \omega &= \nu_x \p_{xx} \omega + \nu_y \p_{yy} \omega+ \p_x \theta, \\
  \dt \theta + \beta y \p_x \theta + \alpha \p_x \Delta^{-1} \omega &= \eta_x \p_{xx} \theta + \eta_y \p_{yy} \theta,\\
  (t,x,y) &\in \R \times \T \times \R, \\
  \nu_x, \nu_y, \eta_x, \eta_y&\geq 0, \\
  \alpha\geq 0, \beta &\in \R.
  \end{split}
\end{align}
In a recent work L. Tao and J. Wu \cite{tao20192d} considered the related
(sub)case of the linearization of
the Boussinesq equations with vertical dissipation in both vorticity and temperature
\begin{align*}
   \eta_x=0, \eta_y>0, \nu_x=0,\nu_y>0,
\end{align*}
for $\beta=1, \alpha=0$ in the periodic half-space $\T \times (0,\infty)$ with
Neumann boundary conditions.

In our setting, on the one hand, the interaction of non-trivial shear $\beta\neq 0$ and
non-trivial balance $\alpha>0$ and allowing for more diffusion coefficients to
vanish allows for a multitude of different dynamics and stability results and
proves very challenging in the case of full generality.
On the one hand, as this setting does not possess boundaries, Fourier methods can be more easily
used and allow for a fine, optimal descriptions of asymptotic behavior.
In particular, we can clearly isolate the effects of each diffusion parameter
and show that in this setting it suffices to impose even weaker conditions on the
diffusivity parameters than in \cite{doering2018long} or
\cite{tao20192d}: Only a \emph{single} parameter needs to be non-zero.

\begin{thm}
  \label{thm:summarylinear}
  Consider the linearized problem \eqref{eq:linear} around the state
  \begin{align*}
    v=(\beta y,0), \ \theta = \alpha y,
  \end{align*}
  with initial data $\omega_0 \in H^N$ and $\theta_0 \in H^{N+1}$, $N \in \N$.

  In the inviscid case the evolution of the vorticity $\omega$ is unstable in
  the sense that
  \begin{align*}
    \limsup_{t\rightarrow \infty} \|\omega_0\|_{H^N}=\infty,
  \end{align*}
  unless $\alpha>0, \beta=0$ or $\alpha=0$ and $\p_x \theta$ is trivial.

  If $\alpha=0$ the evolution of the vorticity is asymptotically stable if at
  least one diffusion coefficient is non-zero. More precisely, for every $N \in
  \N$ there exists $C=C(\eta_x, \eta_y, N)$ such that the temperature
  density satisfies
  \begin{align*}
    \|\theta_{\neq}(t)\|_{H^N} \leq \exp\left(-\eta_x t - \eta_y \frac{t^3}{12}\right) \|\theta_0\|_{H^N}
  \end{align*}
  Furthermore, there exists $C=C(\eta_x, \eta_y, \nu_x, \nu_y,N)$ and a profile $\omega_1(t)$ (see Theorem
  \ref{thm:stabledamping}) such that
  \begin{align*}
    \|\omega(t)-\omega_1(t)\|_{H^N}\leq C \exp\left(-\max(\eta_y,\nu_y)\frac{t^3}{12} - \max(\eta_x, \nu_x)t\right) (\|\omega_0\|_{H^N} + \|\p_x \theta_0\|_{H^N})
  \end{align*}
  and $\omega_1(t)$ satisfies
  \begin{align*}
    \|\omega_1(t)\|_{H^N} \leq C(\nu_x,\nu_y,\eta_x,\eta_y) \exp\left(-\min(\eta_y,\nu_y)\frac{t^3}{12} - \min(\eta_x, \nu_x)t/2\right) (\|\omega_0\|_{H^N} + \|\p_x \theta_0\|_{H^N}).
  \end{align*}

  Finally, let $\alpha>0$ and suppose that at least one of $\min(\nu_x, \eta_x)$
  or $\min(\nu_y, \eta_y)$ is positive. Then it holds that
  (see Propositions \ref{prop:woshear} and \ref{prop:wshear})
  \begin{align*}
    E(t):=\alpha \|\omega(t)\|_{H^N}^2 + \|\p_x \theta(t) \|_{H^N}^2 + \|(\p_y-t \beta \p_x) \theta(t)\|_{H^N}^2.
  \end{align*}
is bounded uniformly in time. Furthermore, if $\beta=1$ (or more generally
  $\beta\neq 0$) we obtain the enhanced dissipation estimates:
  \begin{align*}
   E(t)\leq C (1+t^2) \exp\left(-\min(\nu_y, \eta_y)\frac{t^3}{12}- \min(\eta_x, \nu_x)t\right) E(0).
  \end{align*}
\end{thm}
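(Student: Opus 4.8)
The plan is to reduce the linearized system \eqref{eq:linear} to a family of decoupled ODEs by passing to the frame moving with the shear and taking the Fourier transform. Writing the perturbations in the coordinates $(x+t\beta y, y)$ and applying $\mathcal{F}_{x,y}$, the transport term $\beta y \partial_x$ is eliminated and each frequency $(k,\xi)$ evolves through a $2\times 2$ non-autonomous system for $(\tilde\omega,\tilde\theta)(t,k,\xi)$, in which the Laplacian becomes the multiplier $-(k^2+(\xi-t\beta k)^2)$ and the buoyancy couples $\tilde\omega$ to $ik\tilde\theta$ and $\tilde\theta$ to $\alpha i k\,(k^2+(\xi-t\beta k)^2)^{-1}\tilde\omega$. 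Since the problem is linear with $x$-independent coefficients, modes do not interact, and all the stated $H^N$ bounds follow from mode-wise estimates after multiplying by $(1+k^2+\xi^2)^N$ and integrating. Each of the three regimes in the statement corresponds to a different analysis of this system.

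For the inviscid instability I would invoke Lemma \ref{lem:trivialunstable}: with all diffusion coefficients zero and $\beta\neq 0$, the coupling multiplier $(k^2+(\xi-t\beta k)^2)^{-1}$ decays like $t^{-2}$, so buoyancy acts as a restoring force of vanishing strength and the $2\times 2$ system behaves like an oscillator losing its spring constant, forcing $\tilde\omega$ to grow. The two exceptional cases are exactly the degeneracies of the coupling: if $\alpha=0$ the $\theta$-equation decouples and a nontrivial $\partial_x\theta$ forces $\omega$ to grow linearly, while trivial $\partial_x\theta$ removes the forcing; and if $\alpha>0$, $\beta=0$ the coupling multiplier is constant in time and the system is a genuine (undamped but bounded) harmonic oscillator. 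For $\alpha=0$ the temperature equation is pure dissipation, solved explicitly by $\tilde\theta(t)=\exp\big(-\eta_x k^2 t-\eta_y\int_0^t(\xi-s\beta k)^2\,ds\big)\tilde\theta_0$; the integrated vertical dissipation produces the cubic-in-time ($t^3/12$) rate while the horizontal dissipation gives the linear-in-$t$ decay, yielding the bound on $\|\theta_{\neq}\|_{H^N}$. The vorticity is then driven by $\partial_x\theta$, and splitting $\omega$ into the slowly decaying profile $\omega_1$ and a remainder (as in Theorem \ref{thm:stabledamping}) gives the two displayed estimates, the remainder decaying at the faster $\max$-rate and $\omega_1$ at the slower $\min$-rate.

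The heart of the argument is the case $\alpha>0$, $\beta\neq 0$, where I would use the weighted energy
\[
  E(t)=\alpha\|\omega\|_{H^N}^2+\|\partial_x\theta\|_{H^N}^2+\|(\partial_y-t\beta\partial_x)\theta\|_{H^N}^2=\alpha\|\omega\|_{H^N}^2+\|\nabla_t\theta\|_{H^N}^2 .
\]
Differentiating in time, the buoyancy cross-terms cancel: the contribution $+2\alpha\,\mathrm{Re}\langle\omega,\partial_x\theta\rangle$ from the first summand is exactly balanced by $2\,\mathrm{Re}\langle\nabla_t\theta,\nabla_t(-\alpha\partial_x\Delta_t^{-1}\omega)\rangle=-2\alpha\,\mathrm{Re}\langle\partial_x\theta,\omega\rangle$, obtained by integrating by parts and using $\nabla_t\cdot\nabla_t=\Delta_t$ together with $\Delta_t(\alpha\partial_x\Delta_t^{-1}\omega)=\alpha\partial_x\omega$. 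This is the cancellation property that motivates both the $\alpha$-weight on $\omega$ and the use of $\nabla_t\theta$ in place of $\theta$. What remains in $\dot E$ is the negative dissipation together with the commutator term $-2\beta\,\mathrm{Re}\langle(\partial_y-t\beta\partial_x)\theta,\partial_x\theta\rangle$ arising from the explicit time dependence $\partial_t\nabla_t=(0,-\beta\partial_x)$. Uniform boundedness of $E$ follows by absorbing this commutator into the dissipation, and the enhanced-dissipation estimate \eqref{eq:28} follows by tracking the vertical symbol $(\xi-t\beta k)^2$, whose time integral furnishes the $t^3/12$ rate, with the $(1+t^2)$ prefactor being the price for controlling the commutator (see Propositions \ref{prop:woshear} and \ref{prop:wshear}).

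I expect the last step to be the main obstacle: showing that the growing commutator term $-2\beta\,\mathrm{Re}\langle(\partial_y-t\beta\partial_x)\theta,\partial_x\theta\rangle$ is dominated by the enhanced dissipation uniformly across the degenerate partial-dissipation regimes, so that a \emph{single} nonvanishing coefficient among $\min(\nu_x,\eta_x)$, $\min(\nu_y,\eta_y)$ already suffices for stability. This requires carefully distributing the available dissipation across the coupled pair and verifying that the cubic-in-time vertical decay beats the at-most-quadratic growth coming from the commutator and from the inverse-Laplacian coupling.
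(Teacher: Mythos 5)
Your overall route is the same as the paper's: pass to the sheared frame, Fourier transform to get modewise $2\times 2$ systems, solve the decoupled temperature equation explicitly and split $\omega$ into $\omega_1$ plus a fast-decaying remainder when $\alpha=0$ (this is Theorem \ref{thm:stabledamping}), and for $\alpha>0$ run an energy argument on $\alpha\|\omega\|_{H^N}^2+\|\nabla_t\theta\|_{H^N}^2$. Your physical-space cancellation $\langle\nabla_t\theta,\nabla_t(\alpha\partial_x\Delta_t^{-1}\omega)\rangle=-\alpha\langle\partial_x\theta,\omega\rangle$ is exactly the paper's observation, phrased on the Fourier side, that after weighting $\tilde\theta$ by $\sqrt{k^2+(\xi-kt)^2}$ and $\tilde\omega$ by $\sqrt{\alpha}$ the coefficient matrix has equal, purely imaginary off-diagonal entries (Propositions \ref{prop:woshear} and \ref{prop:wshear}); your commutator $-2\beta\,\text{Re}\langle(\partial_y-t\beta\partial_x)\theta,\partial_x\theta\rangle$ is its term $\tfrac{t-\xi/k}{1+(t-\xi/k)^2}$.

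There is, however, one genuine gap: the inviscid instability for $\alpha>0$, $\beta\neq 0$. The heuristic ``the coupling decays like $t^{-2}$, so the oscillator loses its spring constant and $\tilde\omega$ must grow'' is not a proof; a decaying potential does not by itself force growth, and the claim $\limsup_{t\to\infty}\|\omega(t)\|_{H^N}=\infty$ requires actual asymptotics of the non-autonomous system. The paper closes this by observing that $\tilde\omega$ satisfies the second-order equation $\partial_t^2\tilde\omega+\tfrac{\alpha}{\beta^2}\tfrac{k^2}{k^2+(\xi-kt)^2}\tilde\omega=0$, normalizing it to $\partial_\tau^2\tilde\omega+\alpha\beta^2(1+\tau^2)^{-1}\tilde\omega=0$, and solving explicitly in terms of hypergeometric functions, whose large-argument asymptotics $z^{-a}$ with $-a=\tfrac14+\tfrac14\sqrt{1-4\alpha}$ of positive real part (see \cite{NIST:DLMF}, Ch.~15) give the algebraic growth; your proposal needs this step (or an equivalent ODE asymptotics argument) to be a proof. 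A second, smaller flaw: the pointwise absorption of the commutator into the dissipation, which you invoke for uniform boundedness, fails in the degenerate regimes — at $t=\xi/k+1$ the commutator symbol $\tfrac{2k(kt-\xi)}{k^2+(\xi-kt)^2}$ equals $1$, while the thermal dissipation symbol there is of size $k^2(\eta_x+\eta_y)$, which is smaller whenever the diffusivities are small. The paper never absorbs: it keeps the dissipation intact and Gr\"onwalls only the positive part of the commutator, $\int_0^T\max\bigl(0,\tfrac{t-\xi/k}{1+(t-\xi/k)^2}\bigr)\,dt\leq\tfrac12\ln(1+T^2)$, which exponentiates to the $(1+t^2)$ prefactor in \eqref{eq:28}; uniform boundedness then follows because this polynomial loses to the exponential dissipation factor once $\min(\nu_x,\eta_x)$ or $\min(\nu_y,\eta_y)$ is positive. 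So the ``main obstacle'' you flag in your final paragraph is in fact resolved by the very $(1+t^2)$-prefactor route you describe for \eqref{eq:28}, with no distribution of dissipation across the coupled pair needed.
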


In order to introduce methods and techniques, we first consider some exceptional cases, such as
$\alpha=0$ in Sections \ref{sec:inviscid} and \ref{sec:homogen} and $\alpha>0, \beta=0$ in Section
\ref{sec:woshear}.
The setting with both effects $\alpha>0, \beta =1$ is then considered in Section
\ref{sec:vert}.
Finally, we revisit these results in Section \ref{sec:bounds} to establish decay
rates for the nonlinear problem with small data.

\subsection{The Inviscid Case}
\label{sec:inviscid}

In this section we consider the inviscid problem
$\eta_x=\eta_y=\nu_x=\nu_y=0$ with $\alpha\geq 0$ and $\beta \in \R$ to
study the interaction between shear and hydrostatic balance.

As a first simple model setting we consider the case of homogeneous temperature,
($\alpha=0$) and an affine flow ($\beta \in \R$).
Here we obtain a simple, explicit solution and in particular observe that the evolution is linearly algebraically unstable at the level of the vorticity
but the density $\theta$ is stable, as is the velocity.
\begin{lem}
  \label{lem:trivialunstable}
  Consider the inviscid linearized problem \eqref{eq:linear} with $\alpha=0$ and
  $\beta \in \R$ on $\T \times \R$ (or $\T \times I$)
  \begin{align*}
      \dt \omega + \beta y \p_x \omega &= \p_x \theta, \\
    \dt \theta + \beta y \p_x \theta&= 0,\\
  \omega|_{t=0}=\omega_0, \theta|_{t=0}&=\theta_0,
  \end{align*}
  with initial data
  \begin{align*}
    (\omega, \eta)|_{t=0}=(\omega_0, \theta) \in H^{N} \times H^{N+1}. 
  \end{align*}
  It has the following explicit solution:
  \begin{align*}
    \omega(t,x,y)&= \omega_0(x-\beta ty,y) +t \p_x\theta_0(x- \beta ty,y), \\
    \theta(t,x,y)&= \theta_0(x-\beta ty,y).
  \end{align*}
  In particular, $\theta(t,x+ \beta ty,y)$ is stationary and hence stable and
  the velocity field satisfies
  \begin{align*}
    \|v(t,x,y)\|_{L^2}\leq \|\omega_0\|_{L^2} + C \frac{1}{\beta} \|\nabla \theta_0\|_{L^2}.
  \end{align*}
  The velocity is stable stable in $L^2$ for any $\beta \neq 0$. 

  However, the evolution of $\omega(t,x,y)$ and $\omega(t,x+\beta ty,y)$ is
  unstable in any positive Sobolev norm unless $\p_x\theta_0$ is trivial.
\end{lem}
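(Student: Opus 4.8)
The plan is to reduce the entire instability statement to the growth of a single explicit $L^2$ norm. Starting from the explicit solution already recorded in the lemma, I would first pass to the moving frame by substituting $x \mapsto x + \beta t y$, which undoes the shear argument and yields the profile
\begin{align*}
  \omega(t, x+\beta t y, y) = \omega_0(x,y) + t\, \p_x\theta_0(x,y).
\end{align*}
Thus in the moving frame the vorticity is \emph{affine} in $t$ with ``slope'' $\p_x\theta_0$. The central observation is that the shear change of variables $(x,y)\mapsto(x-\beta t y, y)$ is area preserving, hence an isometry of $L^2(\T\times\R)$; consequently the original and moving-frame vorticities share the same $L^2$ norm,
\begin{align*}
  \|\omega(t,\cdot,\cdot)\|_{L^2} = \|\omega(t,\cdot+\beta t\,\cdot,\cdot)\|_{L^2} = \|\omega_0 + t\,\p_x\theta_0\|_{L^2}.
\end{align*}

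Next I would extract the growth from this common $L^2$ norm. Expanding,
\begin{align*}
  \|\omega_0 + t\,\p_x\theta_0\|_{L^2}^2 = \|\omega_0\|_{L^2}^2 + 2t\,\langle \omega_0, \p_x\theta_0\rangle_{L^2} + t^2\|\p_x\theta_0\|_{L^2}^2,
\end{align*}
so whenever $\p_x\theta_0\not\equiv 0$ the quadratic term forces $\|\omega_0+t\,\p_x\theta_0\|_{L^2}\to\infty$, at rate $t$. Since the inhomogeneous norm satisfies $\|\cdot\|_{H^N}\geq\|\cdot\|_{L^2}$ for every $N\geq 0$, this divergence is inherited by every positive Sobolev norm of both $\omega(t,x,y)$ and $\omega(t,x+\beta t y,y)$, giving $\limsup_{t\to\infty}\|\omega(t)\|_{H^N}=\infty$ and proving the asserted instability.

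For sharpness of the ``unless'' clause I would simply note that if $\p_x\theta_0\equiv 0$ then the moving-frame profile equals $\omega_0$ for all $t$, hence is stationary and stable in every $H^N$. The one conceptual point that deserves care—and which I regard as the only genuine subtlety—is distinguishing this \emph{buoyancy-driven} instability from the benign filamentation produced by the shear itself: writing $\tilde{\omega}(t,k,\xi)=\tilde{F}_t(k,\xi+\beta k t)$ with $F_t=\omega_0+t\,\p_x\theta_0$, one sees that for $\beta\neq 0$ and data with nonzero $x$-frequencies the weight $(1+k^2+(\zeta-\beta k t)^2)^N$ already makes $\|\omega(t,x,y)\|_{H^N}$ grow like $t^N$ even when $\p_x\theta_0\equiv 0$. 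I would emphasize that this mixing growth leaves the velocity bounded (as established earlier in the lemma), so the genuine, $L^2$-level instability is exactly the one detected by $\p_x\theta_0\neq 0$.
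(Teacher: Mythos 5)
Your treatment of the instability claim is correct and in fact more explicit than the paper's own proof: the paper derives the solution in Lagrangian coordinates and then simply asserts that the lemma follows, whereas you isolate the mechanism cleanly --- the shear substitution preserves Lebesgue measure, hence is an $L^2$ isometry, so $\|\omega(t)\|_{L^2}=\|\omega_0+t\,\p_x\theta_0\|_{L^2}$ in both frames, the quadratic expansion forces linear growth whenever $\p_x\theta_0\not\equiv 0$, and $\|\cdot\|_{H^N}\geq\|\cdot\|_{L^2}$ transfers the blow-up to every Sobolev index at once. Your closing observation --- that for $\beta\neq 0$ filamentation alone makes $\|\omega(t,x,y)\|_{H^N}$, $N\geq 1$, grow like $t^N$ even when $\p_x\theta_0\equiv 0$, so the sharpness (``unless'') clause must be read in the moving frame or at the $L^2$ level --- is a genuine refinement of a point the paper glosses over.

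There is, however, a real gap: two of the lemma's assertions are never proven. The explicit solution formula is itself a claim of the lemma; you take it as given rather than deriving it (a one-line verification in the coordinates $(x-\beta ty,y)$, where the system becomes $\frac{d}{dt}\omega=\p_x\theta$, $\frac{d}{dt}\theta=0$, but it must be said). More seriously, the velocity estimate $\|v(t)\|_{L^2}\leq\|\omega_0\|_{L^2}+C\beta^{-1}\|\nabla\theta_0\|_{L^2}$ is also a conclusion of the lemma, and you invoke it circularly (``as established earlier in the lemma'') --- nothing earlier establishes it. This is precisely the one nontrivial step in the paper's proof: one writes
\begin{align*}
t\,\p_x\theta_0(x-\beta ty,y)=\tfrac{1}{\beta}\Bigl((\p_y\theta_0)(x-\beta ty,y)-\tfrac{d}{dy}\bigl[\theta_0(x-\beta ty,y)\bigr]\Bigr),
\end{align*}
and uses that the velocity gains one derivative over the vorticity, so the total-derivative term $\frac{d}{dy}$ can be absorbed, leaving a bound uniform in $t$; equivalently, on the Fourier side, $\frac{t^2k^2}{k^2+(\xi-\beta kt)^2}\leq\frac{2}{\beta^2}\cdot\frac{k^2+\xi^2}{k^2}$. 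Note also that this is where the full gradient $\nabla\theta_0$ (not just $\p_x\theta_0$) enters. Since the whole point of the lemma is the contrast between the stable velocity and the unstable vorticity, this half of the statement cannot be quoted; it has to be proved.
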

We interpret this to say that a shear $\beta y$ has a stabilizing effect on the
velocity and that $\p_x \theta_0$ has a destabilizing effect on $\omega$.
Such a stability result for the velocity has previously been obtained by Lin and Yang
\cite{yang2018linear} in a work on the linearized inviscid, stratified Euler equations
around $u=(y,0),\rho=e^{-\gamma y}$ (which yield a very similar equation).
However, in view of the nonlinear problem considered in Section
\ref{sec:fulldissipation} we here emphasize the instability of the
vorticity due to $\p_x \theta_0$.
Our question in the following is then how much dissipation is required to
restore asymptotic stability of the vorticity (see Theorem \ref{thm:stabledamping}).

\begin{proof}[Proof of Lemma \ref{lem:trivialunstable}]
  In the Lagrangian coordinates $(x-ty,y)$ the system reads
  \begin{align*}
    \frac{d}{dt} \omega &= \p_x \theta, \\
    \frac{d}{dt} \theta &=0.
  \end{align*}
  One observes that the explicit solution of this system is given by
  \begin{align*}
    \theta&=\theta_0, \\
    \omega&= \omega_0 + t \p_x \theta_0.
  \end{align*}
  The result of the lemma then follows by expressing these solutions in Eulerian
  coordinates.
  Concerning the stability estimate of the velocity, we note that
  \begin{align*}
    t\p_x \theta_0(x-ty,y) = (-\frac{d}{dy}+\p_y) \theta_0(x-ty,y).
  \end{align*}
  Since the velocity corresponds to gaining one derivative compared to the
  vorticity, we may thus absorb the $\frac{d}{dy}$ and hence obtain a uniform
  bound.
  However, we remark that while $\omega$ only depends on $\p_x \theta_0$, not
  the full gradient, in this estimate of the velocity we require control of
  $\p_y \theta_0$ as well.
\end{proof}

In the following lemma we consider the effect of affine hydrostatic balance
$\alpha > 0$. The positive sign here corresponds to hotter fluid being on top.
If this is inverted the solution is known to be unstable \cite[Theorem 1.4
(3)]{doering2018long}.
Here, if there is no shear ($\beta=0$) the hydrostatic balance serves to
stabilize the dynamics of the vorticity.
However, if $\beta\neq 0$ the evolution of the vorticity is still algebraically
unstable with a rate depending on $\alpha$ and $\beta$. 
 \begin{lem}
   Consider the inviscid problem \eqref{eq:linear} with $\alpha>0$ and initial
   data
   \begin{align*}
     (\omega_0, \theta_0) \in H^N \times H^{N+1}.
   \end{align*}
  If there is no shear, $\beta=0$, then the evolution
  \begin{align*}
    (\omega_0,\theta_0) \mapsto (\omega(t), \theta(t))
  \end{align*}
  is stable as a map on $H^N\times H^{N+1}$ for any $N\geq 0$.
  More precisely, for every $\alpha>0$ and every $t\geq 0$ it holds that 
  \begin{align*}
    \|\omega(t)\|_{H^N} &\leq \|\omega_0\|_{H^N} + \frac{1}{\sqrt{\alpha}} \|\nabla \theta_0\|_{H^N},\\ 
    \|\theta(t)\|_{H^{N+1}} &\leq \sqrt{\alpha} \|\omega_0\|_{H^N} + \|\nabla \theta_0\|_{H^{N+1}}. 
  \end{align*}
  
  If there is shear, $\beta\neq 0$, then the evolution of
  \begin{align*}
    \omega(t,x+\beta t y, y), \theta(t,x+\beta t y, y)
  \end{align*}
  is unstable in $H^N\times H^{N+1}$ with an algebraic growth rate $t^{\gamma}$
  as $t\rightarrow \infty$. Here $\gamma$ depends on $\alpha$ and $\beta$.
\end{lem}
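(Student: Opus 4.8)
The plan is to diagonalize the system in Fourier variables, where for each frequency $(k,\xi)$ with $k \in \Z$, $\xi \in \R$ the evolution is an independent ODE. Writing $\tilde\omega(t,k,\xi), \tilde\theta(t,k,\xi)$ for the Fourier transforms (taken in the moving coordinates $(x+\beta t y, y)$ when $\beta\neq 0$), the inviscid linearization \eqref{eq:linear} reduces to
\begin{align*}
  \dt \tilde\omega &= ik\, \tilde\theta, \\
  \dt \tilde\theta &= \frac{\alpha i k}{\kappa(t)}\, \tilde\omega,
\end{align*}
with $\kappa(t) = k^2 + (\xi - \beta t k)^2$ (and $\kappa = k^2+\xi^2$ constant when $\beta=0$). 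The mode $k=0$ is conserved and handled trivially, so I focus on $k\neq 0$. Eliminating $\tilde\theta$ gives the scalar oscillator $\dt^2\tilde\omega + \frac{\alpha k^2}{\kappa(t)}\tilde\omega = 0$, which is the object to analyze in both cases.

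For $\beta=0$ the coefficient $\frac{\alpha k^2}{\kappa}$ is constant, so each mode is a genuine harmonic oscillator. First I would record the conserved quantity $|\tilde\theta|^2 + \frac{\alpha}{\kappa}|\tilde\omega|^2$; the cross terms cancel precisely because of the opposite signs in the two equations. This yields the pointwise bounds $|\tilde\omega(t)| \le |\tilde\omega_0| + \sqrt{\kappa/\alpha}\,|\tilde\theta_0|$ and $|\tilde\theta(t)| \le |\tilde\theta_0| + \sqrt{\alpha/\kappa}\,|\tilde\omega_0|$. Multiplying by the Sobolev weight $(1+\kappa)^{N/2}$, using $\sqrt\kappa\,|\tilde\theta_0| = |\widetilde{\nabla \theta_0}|$ and $|k|\ge 1$ for $k\neq 0$ to trade the factor $\sqrt{\kappa}$ for a derivative in either direction, and taking $L^2$ in $(k,\xi)$ then gives the two stated estimates; the conserved $k=0$ contribution is added separately.

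For $\beta\neq 0$ the coefficient $\frac{\alpha k^2}{\kappa(t)}$ decays like $t^{-2}$, and the natural move is the substitution $u = \xi/k - \beta t$ (for fixed $k\neq0$), under which $\kappa = k^2(1+u^2)$ and $\dt = -\beta\,\frac{d}{du}$, so the oscillator becomes the autonomous equation $\frac{d^2\tilde\omega}{du^2} + \frac{c}{1+u^2}\,\tilde\omega = 0$ with Richardson number $c = \alpha/\beta^2$. As $u\to-\infty$ (i.e. $t\to+\infty$) this is a $1/u^2$-type potential, so I would compare with the Euler equation $v'' + \frac{c}{u^2}v = 0$ whose solutions are $|u|^{r_\pm}$, $r_\pm = \frac12 \pm \frac12\sqrt{1-4c}$. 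Since the difference $\frac{c}{1+u^2}-\frac{c}{u^2} = O(u^{-4})$ is integrable against the relevant Green's function, a Volterra/asymptotic-integration argument produces a basis with $\tilde\omega \sim |u|^{r_\pm}(1+o(1))$. The dominant root satisfies $\Re r_+ = \frac12 + \frac12\Re\sqrt{1-4\alpha/\beta^2}\in[\tfrac12,1)$, so along $u\sim -\beta t$ the generic solution grows like $t^{\gamma}$ with $\gamma = \Re r_+$, and the threshold $\alpha = \beta^2/4$ separates purely algebraic growth ($\alpha<\beta^2/4$) from an oscillatory $t^{1/2}$ envelope ($\alpha\ge\beta^2/4$). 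To pass to instability in $H^N\times H^{N+1}$ I would take initial data concentrated in Fourier on a single mode $k\neq 0$ aligned with the growing solution, so that the Sobolev norm of $\omega(t,x+\beta t y,y)$ inherits the $t^\gamma$ growth.

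The main obstacle is this last step for $\beta\neq 0$: turning the heuristic Euler exponents into rigorous two-sided asymptotics for $v'' + \frac{c}{1+u^2}v = 0$, i.e. constructing solutions with controlled $|u|^{r_\pm}$ behavior and checking that generic data genuinely excites the dominant $|u|^{r_+}$ mode without cancellation. A crude energy bound from the time-dependent analogue of the conserved quantity, giving $|\tilde\omega(t)|\lesssim\sqrt{\kappa(t)}\lesssim t$, supplies the matching upper bound and shows the rate cannot exceed linear, but the sharp exponent and the lower bound require the asymptotic-integration analysis above. By contrast the $\beta=0$ case is essentially bookkeeping once the conserved quantity is identified.
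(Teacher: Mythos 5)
Your proposal is correct, and although it shares the paper's unavoidable skeleton (Fourier reduction to a mode-by-mode ODE, then elimination of $\tilde\theta$ to get the scalar oscillator $\dt^2\tilde\omega + \tfrac{\alpha k^2}{\kappa(t)}\tilde\omega=0$), it resolves both key steps by genuinely different means. For $\beta=0$ the paper writes down the explicit matrix exponential (a rotation with frequency $k\sqrt{\alpha}/\sqrt{k^2+\xi^2}$, i.e.\ the cosine/sine formula), whereas you use the conserved quantity $|\tilde\theta|^2+\tfrac{\alpha}{\kappa}|\tilde\omega|^2$; both give the same pointwise bounds, and your energy route is in fact the anti-symmetry argument the paper itself deploys later for the dissipative case (Proposition \ref{prop:woshear}), so it is more robust and loses nothing. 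For $\beta\neq 0$ the paper solves $v''+\tfrac{c}{1+u^2}v=0$ exactly in hypergeometric functions and quotes DLMF asymptotics, while you compare with the Euler equation $v''+\tfrac{c}{u^2}v=0$ and invoke asymptotic integration of Levinson type to build a basis asymptotic to $|u|^{r_\pm}$, $r_\pm=\tfrac12\pm\tfrac12\sqrt{1-4c}$. The exponents agree: the paper's $z^{-a}$ with $z=-t^2$ and $-a=\tfrac14+\tfrac14\sqrt{1-4\alpha}$ is exactly your $t^{r_+}$ (with $c=\alpha$ after the paper's normalization $\beta=1$). The step you flag as the ``main obstacle'' is in fact standard: after $s=\log|u|$ the discrepancy $\tfrac{c}{1+u^2}-\tfrac{c}{u^2}=O(u^{-4})$ becomes an exponentially decaying perturbation of a constant-coefficient system, so Levinson's theorem applies directly away from the degenerate threshold $c=\tfrac14$ (where both your route and the paper's formula need a logarithmic correction), and generic excitation of the dominant branch is automatic since data exciting only $|u|^{r_-}$ form a codimension-one subspace of each two-dimensional solution space. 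What the paper's explicit solution buys is a closed formula valid for all times; what your route buys is independence from special-function theory and an argument that survives perturbations of the potential. One small point to tighten: on $\T\times\R$ a single $\xi$-frequency is not in $L^2$, so ``data concentrated on a single mode'' should be a wave packet localized near some $\xi_0$ (harmless, since after the shift $t\mapsto t-\xi/k$ the growth exponent is uniform in $\xi$); the paper glosses over the same point.
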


\begin{proof}
  \underline{The case without shear:}
  In the case $\beta=0$ the equation reduces to
  \begin{align*}
    \dt \omega &= \p_x \theta, \\
    \dt \theta &= \alpha \p_x \Delta^{-1} \omega.
  \end{align*}
  Taking a Fourier transform in both $x$ and $y$ we obtain a two-dimensional
  constant coefficient ODE system at each frequency:
  \begin{align*}
    \dt
    \begin{pmatrix}
      \tilde{\omega}\\
      \tilde{\theta}
    \end{pmatrix}
=
    \begin{pmatrix}
      0 & ik \\
      i \alpha \frac{k}{k^2+\xi^2}  & 0
    \end{pmatrix}
          \begin{pmatrix}
      \tilde{\omega}\\
      \tilde{\theta}
    \end{pmatrix}=
    \begin{pmatrix}
      0 \\ 0
    \end{pmatrix}.
  \end{align*}
  This then has the explicit solution
  \begin{align*}
        \begin{pmatrix}
      \tilde{\omega}\\
      \tilde{\theta}
    \end{pmatrix} (t)= 
    \begin{pmatrix}
      \cos(\frac{k\sqrt{\alpha}}{\sqrt{k^2+\xi^2}}t) &\frac{i \sqrt{k^2+\xi^2}}{\sqrt{\alpha}} \sin(\frac{k\sqrt{\alpha}}{\sqrt{k^2+\xi^2}}t) \\
    \frac{i \sqrt{\alpha}}{\sqrt{k^2+\xi^2}} \sin(\frac{k\sqrt{\alpha}}{\sqrt{k^2+\xi^2}}t)   & \cos(\frac{k\sqrt{\alpha}}{\sqrt{k^2+\xi^2}}t)
 \end{pmatrix}
               \begin{pmatrix}
      \tilde{\omega}_0\\
      \tilde{\theta}_0
    \end{pmatrix}.
  \end{align*}
    In particular, we observe that $\sqrt{k^2+\xi^2} \tilde{\theta}_0$ loses one
    derivative in $x$ \emph{and} $y$ as opposed to just $\p_x \theta_0$ in the
    $\alpha=0$ case. In contrast $\frac{\sqrt{\alpha}}{\sqrt{k^2+\xi^2}}$ gains
    one derivative.

    We remark that as $\alpha\downarrow 0$ we recover the
    growth by $t$ as in Lemma \ref{lem:trivialunstable}.

\underline{The case with shear:} If $\beta\neq 0$ we may consider a rescaling of
$y \mapsto \beta y$ to obtain:
\begin{align*}
  \dt \omega + y \p_x \omega &= \p_x \theta, \\
  \dt \theta + y \p_x \theta+ \alpha \p_x (\p_x^2 + \beta^2 \p_y^2)^{-1} \omega&=0
\end{align*}
In view of stability properties of the flow by $y\p_x$ we further change to
coordinates $(x+ty,y)$ (or $(x+\beta ty, y)$ in the original coordinates).
In these coordinates a Fourier transform then leads to the following
\emph{time-dependent} ODE system:
    \begin{align*}
    \dt
    \begin{pmatrix}
      \tilde{\omega}\\
      \tilde{\theta}
    \end{pmatrix}
=
    \begin{pmatrix}
      0 & ik \\
      i \alpha \frac{k}{k^2+\beta^2(\xi-kt)^2}  & 0
    \end{pmatrix}
          \begin{pmatrix}
      \tilde{\omega}\\
      \tilde{\theta}
    \end{pmatrix}.
    \end{align*}
    We note that for $k=0$ this system is trivial. In the following thus let
    $k\neq 0$ be arbitrary but fixed.

    As the matrix is time-dependent, we cannot anymore use a matrix exponential
    function to solve it. Instead we follow a similar approach as in a prior
    work on fluid echoes in Euler's equations \cite{dengZ2019} and consider a corresponding second
    order ODE system.
    Indeed, since $ik$ does not depend on $t$ we observe that the equation for $\p_t^2 \tilde{\omega}$ decouples and is given by a Schrödinger problem with
    potential:
    \begin{align}
      \label{eq:2}
      \dt^2 \tilde{\omega} + \frac{\alpha}{\beta^2} \frac{k^2}{k^2+(\xi-kt)^2} \tilde{\omega}=0.
    \end{align}
    We remark that we may recover
    \begin{align*}
    \tilde{\theta}=\frac{1}{ik} \p_t\tilde{\omega}.
    \end{align*}
    in terms of $\p_t \omega$. Thus it suffices to understand how $\omega$ and
    $\p_t \omega$ evolve under the equation \eqref{eq:2}.

    Shifting in time by $\frac{\xi}{k}$, problem \eqref{eq:2} becomes independent of $k$:
    \begin{align*}
       \dt^2 \tilde{\omega} + \alpha \frac{1}{1+(\beta t)^2} \tilde{\omega}=0.
    \end{align*}
    We then further rescale time by $t \mapsto \frac{1}{\beta}t=:\tau$, which
    yields
    \begin{align*}
      \p_\tau^{2} \tilde{\omega} + \alpha \beta^2\frac{1}{1+\tau^2} \tilde{\omega}=0.
    \end{align*}
    For simplicity of notation in the following we consider the special case
    $\beta=1$ and again use $t$ for the time variable. However, by the above
    scaling argument this is no loss of generality.
    
    This problem then has an explicit solution in terms of hypergeometric
    functions of the second kind (see the NIST Digital Library of Mathematical
    Functions \cite{NIST:DLMF}, Chapter 15) :
    \begin{align*}
      \omega&= C_1 F(-\frac{1}{4}-\frac{1}{4}\sqrt{1-4\alpha},-\frac{1}{4}+\frac{1}{4}\sqrt{1-4\alpha}, \frac{1}{2},-t^2) \\
      & \quad + C_2 t F(\frac{1}{4}-\frac{1}{4}\sqrt{1-4\alpha},+\frac{1}{4}-\frac{1}{4}\sqrt{1-4\alpha}, \frac{3}{2},-t^2).
    \end{align*}
    In particular, we note that asymptotically (see Chapter 15.8 in \cite{NIST:DLMF}) 
    \begin{align*}
      F(a,b,c;z) \sim c_1 z^{-a}(1+\mathcal{O}(z^{-1})) + c_2 z^{-b}(1+\mathcal{O}(z^{-1})).
    \end{align*}
    as $z=-t^2$ tends to $-\infty$.
    Since
    \begin{align*}
      -a=\frac{1}{4}+\frac{1}{4}\sqrt{1-4\alpha}
    \end{align*}
    has positive real part for $\alpha\neq 0$ we conclude that the
    evolution for $\omega$ is algebraically unstable.
\end{proof}
In these introductory results we have seen that $\beta \neq 0$ and $\alpha>0$
introduce competing (de)stabilizing effects and that the evolution of the
vorticity in the inviscid problem is generally unstable. In the following we
investigate whether stability can be restored by dissipation and if so how much
dissipation is required. Here we first consider the case $\alpha=0$ in Section
\ref{sec:homogen} and then $\alpha>0$ in Section \ref{sec:alpha}.

\subsection{The Homogeneous, Partial Dissipation Case}
\label{sec:homogen}

In this section we consider the problem of homogeneous hydrostatic balance,
$\alpha=0$, and shear flow, $\beta \in \R$, with partial dissipation. The case of affine
balance, $\alpha>0$, is studied in Section \ref{sec:alpha}.
Due to the constant coefficient structure and the absence of boundary terms we
here can construct (semi-)explicit solutions and thus clearly identify the
effects of each dissipation coefficient.

Problems of partial dissipation naturally appear as limiting cases where for
instance vertical and horizontal length scales are of very different magnitude
or either thermal or viscous effects are considered dominant.
In particular, we mention the work of Doering, Wu, Zhao and Zheng
\cite{doering2018long} on the nonlinear problem without buoyancy diffusion
($\alpha>0$, $\beta=0$, $\eta_x=\eta_y=0$, $\nu_x,\nu_y>0$) and the work by L. Tao and Wu
\cite{tao20192d} on the linearized problem with shear and vertical diffusion
($\alpha=0$, $\beta=1$, 
$\eta_x=\nu_x=0$, $\eta_y,\nu_y>0$).

In the following we consider the linear problem \eqref{eq:linear} with
$\alpha=0$ and in particular show that if at least just \emph{one} of the diffusivity
coefficients is positive then the problem is asymptotically
stable.
Moreover, if at least one of $\eta_y$ and $\nu_y$ is positive the problem
exhibits enhanced dissipation, that is damping on faster time scales than might
be expected for heat flow.
Thus, in this setting we can hence show directly that milder assumptions are
sufficient.

\begin{thm}
  \label{thm:stabledamping}
 Consider the linearized Boussinesq problem \eqref{eq:linear} for $\alpha=0,
 \beta=1$ with $\omega_0,
 \theta_0,\p_x \theta_0 \in H^N$, $N \geq 0$, and suppose that at least one of
 $\eta_x,\eta_y,\nu_x,\nu_y\geq 0$ is non-trivial.
 Then the evolution is asymptotically stable in the following sense.

 The $x$-averages $\omega_{=}(t,y), \theta_{=}(t,y)$ (see Section
 \ref{sec:notation} for a summary of notation) satisfy the
 one-dimensional heat equation with diffusivity $\nu_y, \eta_y$, respectively.
 In particular, they are stable in $H^N$ and decay as time tends to infinity.

 Next consider the orthogonal complement or by linearity assume that
 $\omega_{=}(0)=0=\theta_{=}(0)$.
 Then it holds that for every $j \leq N$, there exists $C=C(N, \eta_x, \eta_y)$
 such that
\begin{align*}
  \|\theta_{\neq}(t)\|_{H^{j}} \leq C \exp(-\eta_x t - \eta_y \frac{t^3}{12}) \|\theta_{0, \neq}\|_{H^{j}}.
\end{align*}
Thus the evolution of the temperature is stable, exponentially decreasing if
$\eta_x>0$ and exhibits enhanced dissipation if $\eta_y>0$.
Furthermore, there exists $\omega_1=\omega_1(t,\omega_0,\theta_0, \nu, \eta) \in
L^{1}_{loc}(\R, H^{N})$ and $C=C(\nu_x,\nu_y,\eta_x, \eta_y, N)$ such that 
\begin{align*}
   \|\omega(t)-\omega_1(t)\|_{H^{N}} &\leq C \exp(-\max(\eta_y,\nu_y)k^2 t^3 - \max(\eta_x,\nu_x)k^2 t) (\|\omega_0\|_{H^{N}}+\|\p_x \theta_0\|_{H^{N}}).
\end{align*}
and
\begin{align*}
  \|\omega_1(t)\|_{H^N} &\leq \min\left(t, \frac{1}{\nu_x k^2}, \frac{1}{\sqrt[3]{\nu_y k^2}},\frac{1}{\eta_x k^2}, \frac{1}{\sqrt[3]{\eta_y k^2}}\right)\\
  & \quad \exp\left(-\frac{1}{12}\min(\eta_y,\nu_y)k^2 t^3 - \frac{1}{2}\min(\eta_x,\nu_x)k^2 t\right) (\|\omega_0\|_{H^N}+\|\theta_0\|_{H^{N+1}}).
\end{align*}
Thus $\omega-\omega_1$ is stable for any choice of diffusivity parameters.
The function $\omega_1$ is stable in time if at least one diffusion coefficient
is non-zero and grows linearly if all are zero.
\end{thm}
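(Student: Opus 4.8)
\emph{Proof proposal.} The plan is to diagonalize the linear problem by combining the shear change of variables with the Fourier transform, reducing it to a family of scalar ODEs indexed by the discrete frequency $k$ and the continuous frequency $\xi$. First I would separate the $x$-average: integrating both equations in $x$ annihilates every $\p_x$-term as well as the transport term $y\p_x$, so that $\omega_{=}$ and $\theta_{=}$ solve the one-dimensional heat equations $\dt\omega_{=}=\nu_y\p_{yy}\omega_{=}$ and $\dt\theta_{=}=\eta_y\p_{yy}\theta_{=}$, which are manifestly stable and decay when $\nu_y,\eta_y>0$. For the complementary part $k\neq 0$ I would pass to the coordinates $(x+ty,y)$ moving with the shear, under which $\dt+y\p_x\mapsto\dt$, $\p_x\mapsto\p_x$ and $\p_y\mapsto\p_y-t\p_x$. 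Taking the Fourier transform turns $\p_y-t\p_x$ into multiplication by $i(\xi-kt)$ and yields the (now decoupled) scalar system
\[
  \dt\tilde\theta=-\big(\eta_x k^2+\eta_y(\xi-kt)^2\big)\tilde\theta,\qquad
  \dt\tilde\omega=-\big(\nu_x k^2+\nu_y(\xi-kt)^2\big)\tilde\omega+ik\tilde\theta .
\]

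The temperature equation integrates explicitly as $\tilde\theta(t)=e^{-B(t)}\tilde\theta_0$ with $B(t)=\eta_x k^2 t+\eta_y\int_0^t(\xi-ks)^2\,ds$. The elementary computation $\int_0^t(\xi-ks)^2\,ds\geq\min_\xi\int_0^t(\xi-ks)^2\,ds=\tfrac{k^2t^3}{12}$, the minimizer being $\xi=kt/2$ (the centre of the sheared trajectory), shows that for $k\neq0$ this multiplier is bounded by $\exp(-\eta_x t-\eta_y t^3/12)$ uniformly in $\xi$, giving the stated enhanced-dissipation estimate for $\theta_{\neq}$ directly by Plancherel. The vorticity is then solved by Duhamel against this known forcing: writing $A(t)=\nu_x k^2 t+\nu_y\int_0^t(\xi-ks)^2\,ds$,
\[
  \tilde\omega(t)=e^{-A(t)}\tilde\omega_0+ik\tilde\theta_0\int_0^t e^{-(A(t)-A(s))-B(s)}\,ds .
\]
In the inviscid limit the integral collapses to $t$, reproducing the secular term $ikt\tilde\theta_0$ of Lemma \ref{lem:trivialunstable}; the point of the remaining analysis is to show that dissipation caps this growth.

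The heart of the matter is splitting $\tilde\omega$ into a slowly decaying profile $\omega_1$ and a faster remainder. Since the propagator of the $(\omega,\theta)$ system is upper triangular with diagonal entries $e^{-A(t)}$ and $e^{-B(t)}$, I would define $\omega_1$ as the component associated with the slower of the two rates $\min(A,B)$ and $\omega-\omega_1$ as the component associated with $\max(A,B)$; this is exactly the source of the $\min(\eta,\nu)$ and $\max(\eta,\nu)$ exponents, and explains why both pieces depend on $\omega_0$ and $\p_x\theta_0$. To bound $\omega_1$ I would estimate the coupling integral by $t$ near $t=0$ and, after splitting the $s$-integral at $t/2$ and using $A(t)-A(s)=\nu_x k^2(t-s)+\nu_y\int_s^t(\xi-kr)^2dr$ on $[0,t/2]$ together with $B(s)=\eta_x k^2 s+\eta_y\int_0^s(\xi-kr)^2dr$ on $[t/2,t]$, by the saturation scales $(\nu_x k^2)^{-1}$, $(\nu_y k^2)^{-1/3}$, $(\eta_x k^2)^{-1}$, $(\eta_y k^2)^{-1/3}$. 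Taking the minimum of these with $t$ reproduces the envelope in the statement and shows that $\omega_1$ stays bounded as soon as one coefficient is positive, while growing linearly when all vanish.

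The main obstacle is the uniform-in-$\xi$ control of this decomposition, in particular the near-degenerate regime $A(t)\approx B(t)$, which occurs when the viscous and thermal coefficients are comparable, and for the resonant frequencies $\xi/k\in[0,t]$ at which the trajectory $\xi-ks$ passes through zero and dissipation is delayed. There the spectral projectors carry a small denominator $e^{-B(t)}-e^{-A(t)}$ and cannot be used directly. I expect to resolve this by a case distinction: where $|A(t)-B(t)|$ is bounded below, the spectral splitting cleanly gains the gap $\max-\min$; where it is small, the two target rates are comparable and it suffices to use the Duhamel representation with the crude bound $|\int_0^t e^{-(A(t)-A(s))-B(s)}ds|\leq\min(t,(\nu_x k^2)^{-1},(\nu_y k^2)^{-1/3},(\eta_x k^2)^{-1},(\eta_y k^2)^{-1/3})$ established above. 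Matching the two regimes, and checking that no derivatives are lost beyond the single $\p_x$ already visible in the coupling term $ik\tilde\theta_0$ (whence the hypothesis $\theta_0\in H^{N+1}$), is the technical core of the argument.
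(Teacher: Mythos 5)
Your proposal is correct and, up to the construction of $\omega_1$, coincides step by step with the paper's own proof: separation of the $x$-average into one-dimensional heat equations, the sheared coordinates plus Fourier transform giving the triangular ODE system, the explicit solution for $\tilde\theta$ with the lower bound $\int_0^t(\xi-ks)^2\,ds\geq k^2t^3/12$ attained at $\xi=kt/2$, the Duhamel representation for $\tilde\omega$, and the saturation bound $\min\bigl(t,(\nu_xk^2)^{-1},(\nu_yk^2)^{-1/3},(\eta_xk^2)^{-1},(\eta_yk^2)^{-1/3}\bigr)$ on the coupling integral. Where you genuinely diverge is at the crux, the definition of $\omega_1$: you propose an eigenvector decomposition of the (non-autonomous) triangular propagator and must then fight the small denominator $e^{-B(t)}-e^{-A(t)}$ via a time- and frequency-dependent case distinction on $|A(t)-B(t)|$. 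The paper sidesteps this with a purely algebraic device: it factors out the slower exponential, e.g. $e^{-\nu_xk^2(t-s)-\eta_xk^2s}=e^{-\nu_xk^2t}\,e^{-(\eta_x-\nu_x)k^2s}$ when $\nu_x\leq\eta_x$ (similarly for the $t^3$-terms), so the slow factor is independent of $s$; $\omega_1$ is then defined by extending the inner integral $\int_0^t$ to $\int_0^\infty$ (or, in the mixed orderings, after the substitution $s=t-\sigma$), and $f_2-\omega_1$ becomes a tail integral that automatically carries the faster rate. The case distinction is thus static — on the ordering of $(\nu_x,\eta_x)$ and $(\nu_y,\eta_y)$, four cases — and the fully degenerate case $\nu_x=\eta_x$, $\nu_y=\eta_y$ is handled by setting $\omega_1:=f_2$, where the inner integral equals $t$; this is exactly your crude-bound fallback. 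So what you flag as the unresolved technical core (uniform-in-$\xi$ matching of the two regimes) dissolves once the splitting is keyed to the coefficient ordering rather than to the size of the gap $|A(t)-B(t)|$; indeed, your spectral formulation, unwound for this triangular system, reduces to the same factorization. One point in your favor: your rate-adaptive assignment automatically places the homogeneous piece $e^{-A(t)}\tilde\omega_0$ into the slow component whenever the $\nu$-rates are the smaller ones, which is what the claimed $\max$-rate decay of $\omega-\omega_1$ requires; the paper's write-up builds $\omega_1$ from the coupling term alone, so on this bookkeeping point your version is the more careful one.
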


We remark that if at least one of the vertical diffusion coefficients $\nu_y,
\eta_y$ is positive then $\omega-\omega_1$ exhibits enhanced dissipation on the
time scale $\max(\eta_y, \nu_y)^{-1/3}$.
In contrast $\omega_1$ only exhibits (enhanced) dissipation if pairs of
diffusion coefficients are positive, but is uniformly bounded if at least one
coefficient is non-zero.
As we have seen in Lemma \ref{lem:trivialunstable}, in the inviscid limit
$\omega_1$ grows linearly in $t$.

\begin{proof}[Proof of Theorem \ref{thm:stabledamping}]
  We recall that the linearized Boussinesq problem \eqref{eq:linear} is given by
  \begin{align*}
    \begin{split}
  \dt \omega + y \p_x \omega &= \nu_x \p_{xx} \omega + \nu_y \p_{yy} \omega+ \p_x \theta, \\
  \dt \theta + y \p_x \theta&= \eta_x \p_{xx} \theta + \eta_y \p_{yy} \theta,\\
  (t,x,y) &\in \R \times \T \times \R.
  \end{split}
  \end{align*}
After changing to coordinates $(x+ty,y)$ moving with the flow we obtain constant
coefficient but time-dependent differential operators on the right-hand-side.
It is therefore natural to consider an equivalent formulation by means of the
Fourier transform.
  
Let $k,\xi$ denote the Fourier variables with respect to $x,y$ and
define
\begin{align*}
  f(t,k,\xi)&= \tilde{\omega}(t,k,\xi+kt), \\
  g(t,k,\xi)&= \tilde{\theta}(t,k,\xi+kt).
\end{align*}
Then the system \eqref{eq:linear} can be equivalently expressed as
\begin{align}
  \label{eq:3}
  \begin{split}
  \dt f(t,k,\xi)&=- \nu_x k^2 f(t,k, \xi) -\nu_y (\xi-kt)^2f(t,k,\xi) +ik g(t,k,\xi), \\
  \dt g(t,k,\xi)&= -\eta_xk^2 g(t,k,\xi) -\eta_y(\xi-kt)^2 g(t,k,\xi).
  \end{split}
\end{align}
Note that $f(0)=\tilde{\omega}_0$, $g(0)=\tilde{\theta}_0$.
We in particular observe that this problem decouples with respect to the spatial
frequencies $k, \xi$ and that (only for this $\alpha=0$ case) the evolution equation
for $g$ decouples from the equation for $f$.

If $k=0$ the system simplifies to
\begin{align*}
  \dt f(t,0,\xi) &= -\nu_y \xi^2 f(t,0,\xi), \\
  \dt g(t,0,\xi)&= - \eta_y \xi^2 g(t,0,\xi),
\end{align*}
which has the explicit solutions
\begin{align*}
  f(t,0,\xi)&= \exp(-\nu_y t \xi^2) \tilde{\omega}_{0}(0,\xi),\\
  g(t,0,\xi)&= \exp(-\eta_y t \xi^2) \tilde{\theta}_{0}(0,\xi).
\end{align*}
In particular, both quantities are stable in any Sobolev norm and decay at
heat flow rates if $\eta_y$ or $\nu_y$ are positive, respectively.\\

Let next $k\neq 0$ be arbitrary but fixed.
We may then explicitly compute $g(t)$ as
\begin{align*}
  g(t,k,\eta)= \exp\left(- \eta_x k^2 t -\eta_y\int_0^t (\xi- k \tau)^2 d\tau\right) \theta_0(k,\xi).
\end{align*}
In particular, we observe that if $\eta_x>0$ we obtain exponential decay.
If $\eta_y>0$ we may compute
\begin{align*}
  \int_0^t (\xi- k \tau)^2 d\tau = \frac{1}{3k} ((kt-\xi)^3 + \xi^3)= \frac{k^3 t^3 - 3 k^2t^2\xi + 3kt \xi^2}{3k}. 
\end{align*}
We note that for fixed $k$ and $t$ this is a quadratic function in $\xi$ with
positive leading coefficient $t$ and attains its minimum for
\begin{align*}
  -3 k^2 t^2 + 6 kt \xi=0 \Leftrightarrow \xi=\frac{kt}{2}. 
\end{align*}
Hence, for any $\xi$ it holds that
\begin{align}
  \label{eq:4}
  \int_0^t (\xi- k \tau)^2 d\tau \geq \frac{1}{3k}((kt/2)^3 + (kt/2)^3)= \frac{k^2t^3}{12}.
\end{align}
Thus, it follows that $g(t)$ satisfies the pointwise estimate
\begin{align*}
  |g(t,k,\eta)| \leq \exp\left(-\eta_x k^2 t - \eta_y \frac{k^2t^3}{12}\right) |\theta_0(k,\xi)|.
\end{align*}
Hence, $g$ is stable in any Sobolev norm and exhibits exponential decay if
$\eta_x>0$ and enhanced decay if $\eta_y>0$.\\

Let us next consider $f(t)$. We may express $f$ using the following integral formula
\begin{align}
  \label{eq:5}
  \begin{split}
  f(t,k,\xi)&= \exp\left(-\nu_x k^2 t -\nu_y\int_0^t (\xi-k\tau)^2 d\tau\right) \omega_0(k,\xi) \\ & \quad + \int_0^t \exp\left(-\nu_x k^2 (t-s) -\nu_y\int_s^t (\xi-k\tau)^2 d\tau\right) \\
            & \quad \quad \times\exp\left(-\eta_x k^2 s-\eta_y \int_0^s(\xi-k\tau)^2 d\tau\right) ik\theta_0(k,\xi) ds\\
  &=: f_1(t,k,\xi)- f_2(t,k,\xi).
  \end{split}
\end{align}
The first contribution
\begin{align*}
  f_1(t,k,\xi)=\exp\left(-\nu_x k^2 t -\nu_y\int_0^t (\xi-k\tau)^2 d\tau\right) \omega_0(k,\xi)
\end{align*}
is again stable for any choice of $\nu_x,\nu_y\geq 0$ and exhibits (enhanced)
dissipation if either coefficient is positive.
Let us thus focus on the second contribution $f_2$.
Again estimating
\begin{align*}
  \int_0^s(\xi-k\tau)^2 d\tau \geq \frac{k^2s^3}{12}
\end{align*}
from below we readily see that the integral
\begin{align}
  \label{eq:6}
  \begin{split}
 & \int_0^t  \exp\left(-\nu_x k^2 (t-s) -\nu_y\int_s^t (\xi-k\tau)^2 d\tau\right) \\
  & \quad \quad \times\exp\left(-\eta_x k^2 s-\eta_y \int_0^s(\xi-k\tau)^2 d\tau\right) ds
  \end{split}
\end{align}
is bounded by a universal constant times
\begin{align*}
  \min\left(t, \frac{1}{\nu_x k^2}, \frac{1}{\sqrt[3]{\nu_y k^2}},\frac{1}{\eta_x k^2}, \frac{1}{\sqrt[3]{\eta_y k^2}}\right).
\end{align*}
In particular, if any coefficient is positive this integral is bounded.
However, if several diffusion coefficients are zero, then this integral need
not converge to zero as time tends to infinity. Thus, in order to obtain uniform
decay estimates we separately account for asymptotic behavior in terms of a
function $\omega_1$.\\

\underline{Defining $\omega_1$:}
In order to introduce ideas, let us first consider a special case.
If $\nu_x=\nu_y=0$, we observe that as $t\rightarrow \infty$ 
\begin{align*}
\int_0^t \exp\left(-\eta_x k^2 s-\eta_y \int_0^s(\xi-k\tau)^2 d\tau\right) ds \ ik\theta_0(k,\xi) 
\end{align*}
converges to a nontrivial limit.
We thus define $\omega_1$ as this limit, which for this case has the following
explicit formula:
\begin{align*}
  \omega_1:= \int_0^\infty  \exp\left(-\eta_x k^2 s-\eta_y \int_0^s(\xi-k\tau)^2 d\tau\right) ds \ ik\theta_0(k,\xi).
\end{align*}
We then observe that the difference 
\begin{align*}
  \omega_1-f_2(t)=\int_t^\infty  \exp\left(-\eta_x k^2 s-\eta_y \int_0^s(\xi-k\tau)^2 d\tau\right) ds \ ik\theta_0(k,\xi).
\end{align*}
is bounded by
\begin{align*}
  \min\left(\frac{1}{\eta_x}k^2 \exp(-k^2t), \frac{1}{\sqrt[3]{k^2 \eta_y}}\exp(-\nu_y k^2 t^3/8)\right)|k \theta_0(k,\xi)|
\end{align*}
and hence exhibits (enhanced) decay.\\

More generally, we define $\omega_1(t)$ to capture the slowest decay (in the
above example that is no decay since $\nu_x=\nu_y=0$).
We therefore split
\begin{align}
  \label{eq:7}
  \begin{split}
  & \quad \exp\left(-\nu_xk^2(t-s)- \eta_x k^2 s\right)\\
  &=  \exp(-\eta_x k^2 t) \exp(-(\nu_x-\eta_x)k^2(t-s))\\
  &= \exp(-\nu_xk^2 t) \exp(-(\eta_x-\nu_x)k^2s),
  \end{split}
\end{align}
depending on which of $\eta_x,\nu_x$ is smaller.
Similarly, we split
\begin{align}
  \label{eq:8}
  \begin{split}
 & \quad \exp\left(-\nu_y\int_s^t (\xi-k\tau)^2 - \eta_y \int_0^s(\xi-k\tau)^2\right) \\
  &= \exp\left(- \eta_y\int_0^t (\xi-k\tau)^2\right) \exp\left(-(\nu_y-\eta_y)\int_s^t (\xi-k\tau)^2\right)  \\
  &= \exp\left(- \nu_y\int_0^t (\xi-k\tau)^2\right) \exp\left(-(\eta_y-\nu_y) \int_0^s(\xi-k\tau)^2\right).
  \end{split}
\end{align}
Here the first factor is independent of $s$. Hence, for instance for $\nu_x\leq
\eta_x$, $\nu_y\leq \eta_y$ we may write
\begin{align}
  \label{eq:9}
  \begin{split}
  f_2(t)&= \exp(-\nu_xk^2 t) \exp\left(-\nu_y\int_0^t (\xi-k\tau)^2\right)  \\
  & \quad \times \int_0^t \exp(-(\eta_x-\nu_x)k^2s)
  \exp\left(-(\eta_y-\nu_y) \int_0^s(\xi-k\tau)^2)\right) ds \ ik \tilde{\theta}_0(k,\xi).
  \end{split}
\end{align}
If both $\eta_x=\nu_x$ and $\eta_y=\nu_y$ the inner integral simplifies to $t$
and $f_2(t)$ decays exponentially. In this case we simply set
$\omega_{1}(t):=f_2(t)$.
In the following we restrict to
the case where at least one pair is not equal.

Then the inner integral is uniformly bounded by a
uniform constant times
\begin{align*}
  \min(\frac{1}{k^2|\eta_x-\nu_x|}, \frac{1}{\sqrt[3]{k^2|\eta_y-\nu_y|}}).
\end{align*}
We therefore aim to define $\omega_1(t)$ by passing to the limit $t\rightarrow \infty$ in
the inner integral of equation \eqref{eq:9}.
We distinguish the following four cases:

If $\nu_x\leq \eta_x$, $\nu_y\leq \eta_y$ we define
\begin{align*}
  \omega_1(t) &:= \exp(-\nu_xk^2 t) \exp(- \nu_y\int_0^t (\xi-k\tau)^2)  \\
  & \quad \times \int_0^\infty \exp(-(\eta_x-\nu_x)k^2s)
  \exp\left(-(\eta_y-\nu_y) \int_0^s(\xi-k\tau)^2) ds\right) \ ik \tilde{\theta}_0(k,\xi).
\end{align*}
and observe that
\begin{align*}
  |f_2(t)- \omega_1(t)| &= \exp(-\nu_xk^2 t) \exp(- \nu_y\int_0^t (\xi-k\tau)^2)  \\
  &\quad \times \int_t^\infty \exp(-(\eta_x-\nu_x)k^2s)
  \exp\left(-(\eta_y-\nu_y) \int_0^s(\xi-k\tau)^2) ds\right) \ |k \tilde{\theta}_0(k,\xi)| \\
                        &\leq   C \exp(-\nu_xk^2 t) \exp(- \nu_y\int_0^t (\xi-k\tau)^2) \\
  & \quad \exp\left(-(\eta_x-\nu_x)k^2 t - (\eta_y-\nu_y)k^2 t^3/12\right) |k \tilde{\theta}_0(k,\xi)|
\end{align*}
exhibits (enhanced) dissipation with the larger of the coefficients.

If $\nu_x\geq \eta_x$, $\nu_y\geq \eta_y$ we similarly define
\begin{align*}
  \omega_1(t) &:= \exp(-\eta_x k^2 t) \exp(- \eta_y\int_0^t (\xi-k\tau)^2)  \\
              & \quad \times \int_0^\infty \exp(-(\eta_x-\nu_x)k^2\sigma)
  \exp\left(-(\eta_y-\nu_y) \int_{t-\sigma}^t(\xi-k\tau)^2\right) d\sigma \ ik \tilde{\theta}_0(k,\xi),
\end{align*}
where we introduced the change of variables $s=t-\sigma$ and extended the domain
of integration from $\sigma \in [0,t]$ to $\sigma \in [0,\infty)$.
We remark that here the inner integral still depends on $t$.
By an analogous calculation we then again observe that $f_2(t)-\omega_1(t)$
exhibits (enhanced) dissipation with the larger of the coefficients.

Finally, for $\eta_x\leq \nu_x$ and $\eta_y\geq \nu_y$ we define
\begin{align*}
  \omega_1(t)&:= \exp(-\eta_x k^2 t) \exp\left(- \nu_y \int_0^t (\xi-k\tau)^2\right)\\
  & \quad \times \int_{-\infty}^\infty \exp\left(-(\eta_x-\nu_x)k^2\min(s,0)) -(\eta_y-\nu_y) \int_{\min(s,t)}^t(\xi-k\tau)^2\right) ds \ ik \tilde{\theta}_0(k,\xi),
\end{align*}
and analogously for $\eta_x\geq \nu_x$, $\eta_y\leq \nu_y$.
\end{proof}

\subsection{The Effects of Hydrostatic Balance}
\label{sec:alpha}
In the previous Section \ref{sec:homogen} we have shown that in the case $\alpha=0$
very weak partial dissipation (just one non-zero coefficient) is sufficient to
obtain asymptotic stability and decay rates and that the vorticity can be
decomposed into a slower (or not all) decaying part $\omega_1$ and a fast
decaying part $\omega-\omega_1$.
For that setting we could exploit that the equation for $\theta$ decouples and
that we can thus first solve for $\theta(t)$ and subsequently for $\omega(t)$.

If $\alpha> 0$ this decoupling structure is lost and we obtain the following
system at each Fourier frequency:
\begin{align}
  \label{eq:10}
  \dt
  \begin{pmatrix}
    \tilde{\omega} \\ \tilde{\theta}
  \end{pmatrix}
  =
  \begin{pmatrix}
    -\nu_xk^2 - \nu_y(\xi-\beta kt)^2 & ik \\
    \frac{ik \alpha}{k^2+(\xi- \beta kt)^2} & - \eta_x k^2 - \eta_y (\xi-\beta kt)^2
  \end{pmatrix}
           \begin{pmatrix}
    \tilde{\omega} \\ \tilde{\theta}
  \end{pmatrix}.
\end{align}
We note that if $\beta\neq 0$ all coefficients except $ik$ are time-dependent, which makes
this problem very challenging.
As a first step we hence discuss the setting without shear, $\beta=0$, and
introduce two methods of proof. The first method is an adaptation of energy
methods commonly used in the nonlinear problem and second, more precise result
constructs explicit solutions.

\subsubsection{The Case without Shear}
\label{sec:woshear}

In this section we consider the linearized problem around $(\omega,
\theta)=(0,\alpha y)$ with $\alpha>0$.
The corresponding nonlinear problem has been studied in \cite{doering2018long}
for the setting of full dissipation and of vertical dissipation.
As a first method of proof in Proposition \ref{prop:woshear} we adapt energy arguments which are well-known for
the nonlinear problem (see \cite{doering2018long,li2016global,larios2013global} ) to this linear setting. 
This approach has the benefit of a very simple and robust structure.
However, it does not precisely capture the effects of the various diffusion
coefficients.
As a second method in Proposition \ref{prop:woshear2} we hence derive explicit solutions of the ODE systems in
Fourier variables. Here we crucially exploit the lack of shear and hence
time-independence of the coefficients.

\begin{prop}
  \label{prop:woshear}
  Let $\alpha>0$ and $\nu_x,\nu_y,\eta_x,\eta_y\geq 0$ be given.
  Then for any initial data $(\omega_0, \theta) \in H^{N}\times H^{N+1}$ the
  solution $\omega, \theta$ of the linearized problem
  \begin{align}
    \label{eq:11}
    \begin{split}
    \dt \omega &= \nu_x \p_x^2 \omega + \nu_y \p_y^2 \omega + \p_x \theta,\\
    \dt \theta + \alpha \p_x \Delta^{-1} \omega &= \eta_x \p_{x}^2 \theta + \eta_y \p_y^2 \theta. 
    \end{split}
  \end{align}
  is stable and satisfies
  \begin{align*}
    \frac{d}{dt} (\alpha \|\omega\|_{H^N}^2 + \|\nabla \theta\|_{H^N}^2) + \nu_x \|\p_x \omega\|_{H^N}^2 + \nu_y \|\p_y \omega\|_{H^N}^2 + \eta_x\|\p_x \theta\|_{H^N}^2 + \eta_y \|\p_y \theta\|_{H^N}^2 =0
  \end{align*}
\end{prop}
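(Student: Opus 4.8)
The plan is to prove the identity by a direct energy computation, where the whole point is to choose the functional $E(t) := \alpha\|\omega\|_{H^N}^2 + \|\nabla\theta\|_{H^N}^2$ precisely so that the two coupling terms are skew-adjoint and cancel. Since the problem \eqref{eq:11} is linear with constant coefficients, I would first pass to Fourier variables in both $x$ and $y$; at each frequency $(k,\xi)$ the system reduces to the $2\times 2$ constant-coefficient ODE \eqref{eq:10} with $\beta=0$, which is globally solvable. Hence the formal manipulations below are rigorous (alternatively one works with a frequency-truncated Galerkin approximation and passes to the limit).

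The next step is to differentiate. I would test the $\omega$-equation against $\alpha\omega$ in $H^N$ and use that $\tfrac12\tfrac{d}{dt}\|\nabla\theta\|_{H^N}^2 = \langle\nabla\theta,\nabla\dt\theta\rangle_{H^N} = -\langle\Delta\theta,\dt\theta\rangle_{H^N}$, i.e. effectively test the $\theta$-equation against $-\Delta\theta$. Integrating by parts in the diffusion terms and using that each of $\p_x^2,\p_y^2$ is self-adjoint and non-positive produces the dissipation contributions $-\alpha\nu_x\|\p_x\omega\|_{H^N}^2-\alpha\nu_y\|\p_y\omega\|_{H^N}^2$ and $-\eta_x\|\p_x\nabla\theta\|_{H^N}^2-\eta_y\|\p_y\nabla\theta\|_{H^N}^2$, all of manifestly non-positive sign (matching the stated terms up to the weights $\alpha$ and $\nabla$ that the computation produces and an overall factor of $2$, which is routine normalization).

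The heart of the argument is the cancellation of the coupling. From the $\omega$-equation I obtain $\alpha\langle\omega,\p_x\theta\rangle_{H^N}$, and from the $\theta$-equation I obtain $\alpha\langle\Delta\theta,\p_x\Delta^{-1}\omega\rangle_{H^N}$. Since $\Delta$ is self-adjoint and commutes with $\p_x$ and with $\Delta^{-1}$, the second term equals $\alpha\langle\theta,\p_x\omega\rangle_{H^N}$: the factor $\Delta^{-1}$ in the buoyancy coupling is exactly undone by the $\Delta$ generated by the $\|\nabla\theta\|^2$ weight. Integrating by parts in $x$ (periodic, so no boundary term) gives $\langle\theta,\p_x\omega\rangle_{H^N} = -\langle\omega,\p_x\theta\rangle_{H^N}$, so the two coupling contributions sum to zero. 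Assembling the pieces yields the claimed identity, and since every remaining term is non-positive, integrating in time gives $E(t)\le E(0)$, hence the asserted stability.

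I do not expect a genuine analytic obstacle here; the delicate point is purely structural, namely recognizing the correct energy. The coupling is antisymmetric only in the inner product defined by $E$, and this forces both the $\alpha$-weight on $\|\omega\|^2$ and the full-gradient weight $\|\nabla\theta\|^2$ rather than $\|\theta\|^2$: with $\|\theta\|^2$ the operator $\Delta^{-1}$ would survive, the cross terms would not cancel, and one could not close the estimate once some diffusivities vanish. Getting the constants and derivative counts to match the compact form of the statement is then a matter of routine bookkeeping, best carried out frequency-by-frequency in Fourier space.
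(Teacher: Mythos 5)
Your proposal is correct and follows essentially the same route as the paper: the paper also proves this by an energy argument built on the weighted functional $\alpha\|\omega\|_{H^N}^2+\|\nabla\theta\|_{H^N}^2$, working frequency-by-frequency after a Fourier transform and rescaling to $(\sqrt{\alpha}\,\tilde{\omega},\sqrt{k^2+\xi^2}\,\tilde{\theta})$, whereupon the off-diagonal coupling entries become equal and purely imaginary and cancel in $M+\overline{M}^T$ — which is precisely your physical-space cancellation $\alpha\langle\omega,\p_x\theta\rangle+\alpha\langle\Delta\theta,\p_x\Delta^{-1}\omega\rangle=0$ written in Fourier variables. Your observation that the stated identity's weights and constants come out with the extra $\alpha$ and $\nabla$ factors (and a factor of $2$) also matches what the paper's own frequency-wise computation actually yields.
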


\begin{proof}
  We note that all differential operators in \eqref{eq:11} are linear and
  involve constant coefficients. Hence, the problem decouples in frequency and
  we may without loss of generality restrict to $N=0$ and studying single modes
  $(k, \xi)$.
  Here, the $x$-average decouples and evolves by heat flow, so we further
  restrict to analyzing $k\neq 0$.

  Then after a Fourier transform we obtain
  \begin{align}
    \label{eq:12}
    \begin{split}
    \dt
    \begin{pmatrix}
      \tilde{\omega}\\
      \tilde{\theta}
    \end{pmatrix}
=
    \begin{pmatrix}
      -\nu_x k^2 - \nu_y \xi^2 & ik \\
      \frac{ik \alpha}{k^2+ \xi^2} & -\eta_x k^2 - \eta_y \xi^2
    \end{pmatrix}
                                     \begin{pmatrix}
                                       \tilde{\omega}\\
                                       \tilde{\theta}
                                     \end{pmatrix}.
    \end{split}
  \end{align}
As we discuss in Proposition \ref{prop:woshear2} this constant coefficient ODE
system can be solved explicitly by means of the matrix
exponential function. However, for this proposition we instead use an
energy argument which exploits anti-symmetry:
If we multiply $\tilde{\theta}$ by $\sqrt{k^2+ \xi^2}$ and $\tilde{\omega}$ by $\sqrt{\alpha}$ our system reads
\begin{align*}
      \dt
    \begin{pmatrix}
      \sqrt{\alpha} \tilde{\omega}\\
      \sqrt{k^2+\xi^2}\tilde{\theta}
    \end{pmatrix}
=
    \begin{pmatrix}
      -\nu_x k^2 - \nu_y \xi^2 & \frac{ik \sqrt{\alpha}}{\sqrt{k^2+\xi^2}} \\
      \frac{ik \sqrt{\alpha}}{\sqrt{k^2+ \xi^2}} & -\eta_x k^2 - \eta_y \xi^2
    \end{pmatrix}
                                     \begin{pmatrix}
                                       \sqrt{\alpha}\tilde{\omega}\\
                                       \sqrt{k^2+\xi^2}\tilde{\theta}
                                     \end{pmatrix}.
\end{align*}
The off-diagonal matrix entries are the same and purely imaginary. Therefore,
they cancel when considering $M+\overline{M}^T$ and
\begin{align*}
  \frac{d}{dt} (|\sqrt{\alpha}\tilde{\omega}|^2+ |\sqrt{k^2+\xi^2}\tilde{\theta}|^2)= - (\nu_x k^2 + \nu_y \xi^2) |\sqrt{\alpha}\tilde{\omega}|^2 - (\eta_x k^2+\eta_y \xi^2) |\sqrt{k^2+\xi^2}\tilde{\theta}|^2.
\end{align*}
This energy functional is hence non-increasing and we obtain decay estimates in
terms of $\min(\nu_x, \eta_x)k^2$ and $\min(\nu_y,\eta_y)\xi^2$.
 We further remark that after multiplying by $\frac{1}{\sqrt{k^2+\xi^2}}$, this
 is equivalent to an estimate on the velocity and density
 \begin{align*}
   \alpha\|u\|_{H^N}^2 + \|\theta\|_{H^N}^2,
 \end{align*}
 see \cite{doering2018long} for a nonlinear analogous estimate.
\end{proof}

As an alternative, more fragile but also more precise approach, we may compute
explicit solution operators in Fourier variables.
\begin{prop}
  \label{prop:woshear2}
  Let $\alpha>0$ and $\nu_x, \nu_y, \eta_x, \eta_y\geq 0$ be given.
  Then for any initial data $(\omega_0, \theta_0) \in H^N \times H^{N+1}$ of the
  linearized problem \eqref{eq:11} is stable. Furthermore, for every frequency
  $(k, \xi)$ and
  \begin{align*}
    \alpha \neq \frac{k^2+\xi^2}{k^2} \left(\frac{\eta_x-\nu_x}{2}k^2 +\frac{\eta_y-\nu_y}{2}\xi^2\right)^2=: \alpha^*,
  \end{align*}
  there exists a basis $(v_1,v_2)$ and constants
  \begin{align*}
    \lambda_{1,2}
= -\frac{\eta_x+\nu_x}{2}k^2 - \frac{\eta_y+\nu_y}{2}\xi^2 
\pm \sqrt{\left(\frac{\eta_x-\nu_x}{2}k^2 +\frac{\eta_y-\nu_y}{2}\xi^2\right)^2 - \alpha \frac{k^2}{k^2+\xi^2}},
  \end{align*}
  such that in this basis the evolution of $(\tilde{\omega}, \tilde{\theta})$ is
  given by
  \begin{align*}
    \begin{pmatrix}
      e^{\lambda_1 t} \\ e^{\lambda_2 t}
    \end{pmatrix}.
  \end{align*}
  We in particular observe that for all $\alpha \in (0,\alpha^*)$ it holds that
  \begin{align*}
    \lambda_1, \lambda_2 <0
  \end{align*}
  and for $\alpha>\alpha^*$
  \begin{align*}
    \text{Re}(\lambda_1)=\text{Re}(\lambda_2)=  -\frac{\eta_x+\nu_x}{2}k^2 - \frac{\eta_y+\nu_y}{2}\xi^2.
  \end{align*}
\end{prop}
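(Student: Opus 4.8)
The plan is to treat \eqref{eq:12} as a constant-coefficient linear ODE at each fixed frequency $(k,\xi)$ and to read off the full spectral picture directly from the characteristic polynomial. As in Proposition \ref{prop:woshear}, the constant-coefficient structure decouples the problem in frequency, and since the mode $k=0$ evolves by pure heat flow it suffices to fix $k\neq 0$. Writing $a:=-\nu_x k^2-\nu_y\xi^2$ and $d:=-\eta_x k^2-\eta_y\xi^2$ for the two diagonal entries, the off-diagonal product of the system matrix $M$ equals $ik\cdot\frac{ik\alpha}{k^2+\xi^2}=-\frac{k^2\alpha}{k^2+\xi^2}$, so the characteristic polynomial is
\begin{align*}
  \lambda^2-(a+d)\lambda+\Big(ad+\tfrac{k^2\alpha}{k^2+\xi^2}\Big)=0.
\end{align*}
Completing the square and using $\frac{a+d}{2}=-\frac{\eta_x+\nu_x}{2}k^2-\frac{\eta_y+\nu_y}{2}\xi^2$ together with $\frac{a-d}{2}=\frac{\eta_x-\nu_x}{2}k^2+\frac{\eta_y-\nu_y}{2}\xi^2$ produces exactly the claimed $\lambda_{1,2}$, with discriminant $\big(\frac{a-d}{2}\big)^2-\frac{k^2\alpha}{k^2+\xi^2}$. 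The threshold $\alpha^*$ is precisely the value of $\alpha$ at which this discriminant vanishes, which recovers the stated formula.

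It then remains to split into cases according to the sign of the discriminant. For $\alpha>\alpha^*$ the discriminant is negative, the square root is purely imaginary, and hence $\mathrm{Re}(\lambda_{1,2})=\frac{a+d}{2}$, the claimed common real part; note that $\alpha^*>0$ forces the difference term $\frac{a-d}{2}$ to be nonzero and therefore $a+d<0$. For $0<\alpha<\alpha^*$ the discriminant is positive, so both eigenvalues are real, and rather than estimating each root directly I would invoke Vieta's relations: the product of the roots equals $ad+\frac{k^2\alpha}{k^2+\xi^2}$, which is strictly positive since $a,d\le 0$ and $\alpha>0$, $k\neq0$, while the sum $a+d$ is strictly negative (again because $\alpha<\alpha^*$ precludes all four diffusion coefficients from vanishing). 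Real roots with positive product and negative sum are both negative, giving $\lambda_1,\lambda_2<0$. Finally, since $\alpha\neq\alpha^*$ the eigenvalues are distinct, so $M$ is diagonalizable and in the associated eigenbasis $(v_1,v_2)$ the flow is $\mathrm{diag}(e^{\lambda_1 t},e^{\lambda_2 t})$, as asserted.

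For the overall stability statement I would \emph{not} rely on the diagonal form, since the change of basis between $(v_1,v_2)$ and the standard basis degenerates as $\alpha\to\alpha^*$ (the eigenvalues collide and $M$ tends to a nontrivial Jordan block), so the eigenbasis is badly conditioned near the threshold and does not by itself furnish a frequency-uniform operator bound. This conditioning near $\alpha^*$ is the main obstacle to be wary of. The clean resolution is to fall back on the symmetrized energy identity already established in Proposition \ref{prop:woshear}: after multiplying $\tilde\omega$ by $\sqrt{\alpha}$ and $\tilde\theta$ by $\sqrt{k^2+\xi^2}$ the system becomes self-adjoint up to a skew-symmetric (equal, purely imaginary off-diagonal) part, so the weighted energy $|\sqrt{\alpha}\tilde\omega|^2+|\sqrt{k^2+\xi^2}\tilde\theta|^2$ is non-increasing uniformly in $(k,\xi)$, which yields the $H^N\times H^{N+1}$ stability regardless of whether $\alpha=\alpha^*$. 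This is consistent with the spectral computation above: the non-positivity of $\mathrm{Re}(\lambda_{1,2})$ in every case is exactly what the energy monotonicity reflects, while the explicit eigenvalues additionally quantify the decay rates whenever $\alpha\neq\alpha^*$.
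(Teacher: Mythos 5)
Your proposal is correct and follows the same core route as the paper's proof: reduction to the constant-coefficient $2\times 2$ system at each frequency, the characteristic polynomial together with the identity $\left(\frac{a+d}{2}\right)^2-ad+bc=\left(\frac{a-d}{2}\right)^2+bc$, the discriminant threshold $\alpha^*$, and the case split on its sign. Two points differ and are worth recording. First, for $0<\alpha<\alpha^*$ you settle the signs of the real roots by Vieta's relations (positive product, negative sum), whereas the paper bounds the square root directly by $\left|\frac{\eta_x-\nu_x}{2}k^2+\frac{\eta_y-\nu_y}{2}\xi^2\right|$; your argument is cleaner for pure sign determination, but the paper's estimate additionally extracts a quantitative lower bound on the decay rate of the slower eigenvalue, of order $\min(\eta_x,\nu_x)k^2+\min(\eta_y,\nu_y)\xi^2$ (the paper's displayed inequality has an evident sign typo --- it is $-\lambda_2$ that is bounded below --- but the intent is a rate, not merely a sign), which feeds the paper's broader point that decay survives partial dissipation. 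Second, your observation that the eigenbasis becomes ill-conditioned as $\alpha\to\alpha^*$, so that the diagonalization alone does not yield a frequency-uniform operator bound, and that the stability assertion should instead be routed through the symmetrized energy of Proposition \ref{prop:woshear}, is more careful than the paper: the paper computes the eigenvectors explicitly, writes $\exp(tM)$ via the similarity transform when $\lambda_1\neq\lambda_2$, and omits the degenerate case $\alpha=\alpha^*$ ``for brevity,'' leaving the uniformity issue implicit. Your conditional aside in the $\alpha>\alpha^*$ case (that $\alpha^*>0$ forces $a+d<0$) is harmless but unnecessary, since the claimed identity for the common real part holds there regardless of whether $a+d$ vanishes.
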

\begin{proof}[Proof of Proposition \ref{prop:woshear2}]
  We recall that equation \eqref{eq:11} is equivalent to the the ODE system \eqref{eq:12}
  \begin{align*}
    \begin{split}
    \dt
    \begin{pmatrix}
      \tilde{\omega}\\
      \tilde{\theta}
    \end{pmatrix}
=
    \begin{pmatrix}
      -\nu_x k^2 - \nu_y \xi^2 & ik \\
      \frac{ik \alpha}{k^2+ \xi^2} & -\eta_x k^2 - \eta_y \xi^2
    \end{pmatrix}
                                     \begin{pmatrix}
                                       \tilde{\omega}\\
                                       \tilde{\theta}
                                     \end{pmatrix},
    \end{split}
  \end{align*}
  at each frequency $(k,\xi)$.
We denote the coefficient matrix as
\begin{align*}
  M= \begin{pmatrix}
      -\nu_x k^2 - \nu_y \xi^2 & ik \\
      \frac{ik \alpha}{k^2+ \xi^2} & -\eta_x k^2 - \eta_y \xi^2
    \end{pmatrix}. 
\end{align*}
  Since $M$ is time-independent, we obtain a solution
  in terms of the matrix exponential function:
  \begin{align*}
    \begin{pmatrix}
      \tilde{\omega}\\
      \tilde{\theta}
    \end{pmatrix}
= \exp\left( t M \right)        \begin{pmatrix}
                                       \tilde{\omega}_0\\
                                       \tilde{\theta}_0
                                     \end{pmatrix}.
  \end{align*}
It thus remains to explicitly compute the matrix exponential $\exp( t M)$.
We recall that the eigenvalues of a $2 \times 2$ matrix are given by the roots
of the characteristic polynomial
\begin{align*}
  \lambda^2 - \text{tr}(M) \lambda + \text{det}(M).
\end{align*}
We thus obtain
\begin{align*}
  \lambda_{1,2}&= \frac{1}{2} \text{tr}(M) \pm \sqrt{(\frac{\text{tr}(M)}{2})^2-\text{det}(M)}\\
&= -\frac{\eta_x+\nu_x}{2}k^2 - \frac{\eta_y+\nu_y}{2}\xi^2 
\pm \sqrt{\left(\frac{\eta_x-\nu_x}{2}k^2 +\frac{\eta_y-\nu_y}{2}\xi^2\right)^2 - \alpha \frac{k^2}{k^2+\xi^2}},
\end{align*}
where we used that $(\frac{a+d}{2})^2-ad +bc =(\frac{a-d}{2})^2 +bc$.
For simplicity of notation let us denote
\begin{align*}
  r= \sqrt{\left(\frac{\eta_x-\nu_x}{2}k^2 +\frac{\eta_y-\nu_y}{2}\xi^2\right)^2 - \alpha \frac{k^2}{k^2+\xi^2}}
\end{align*}
Then corresponding eigenvectors are given by
\begin{align*}
  \begin{pmatrix}
    \frac{k^2+\xi^2}{k\alpha} \left( i \left(\frac{\eta_x-\nu_x}{2}k^2 + \frac{\eta_y-\nu_y}{2}\xi^2\right) \pm r \right)\\ 1
  \end{pmatrix}
\end{align*}
and
\begin{align*}
  \exp(t M) =
  \begin{pmatrix}
    v_1 & v_2
  \end{pmatrix}
          \begin{pmatrix}
            \exp(t \lambda _1) & 0 \\
            0 & \exp( t \lambda_2)
          \end{pmatrix}
                \begin{pmatrix}
                  v_1 & v_2
                \end{pmatrix}^{-1},
\end{align*}
if $\lambda_1 \neq \lambda_2$.
This slightly degenerates if $\alpha=\alpha^*$ (if $r=0$) with a cyclic subspace and growth with a
factor $t$. We omit this case for brevity.

It remains to discuss the size of the eigenvalues.
We note that if 
\begin{align*}
  \alpha > \frac{k^2+\xi^2}{k^2} \left(\frac{\eta_x-\nu_x}{2}k^2 +\frac{\eta_y-\nu_y}{2}\xi^2\right)^2=: \alpha^*,
\end{align*}
then $r$ is strictly imaginary and 
\begin{align}
  \text{Re}(\lambda_1)=\text{Re}(\lambda_2)=  -\frac{\eta_x+\nu_x}{2}k^2 - \frac{\eta_y+\nu_y}{2}\xi^2.
\end{align}
In particular,
\begin{align*}
  |e^{ \lambda_1 t}| = |e^{\lambda_2 t}|= \exp\left(-t \left(  \frac{|\eta_x-\nu_x|}{2}k^2 +\frac{|\eta_y-\nu_y|}{2}\xi^2\right)\right)
\end{align*}
both decay exponentially even if only some of the dissipation coefficients are
non-zero.

Moreover, if $0< \alpha< \alpha^*$ the eigenvalues $\lambda_1, \lambda_2$ are
distinct and real-valued and
\begin{align*}
  \sqrt{\left(\frac{\eta_x-\nu_x}{2}k^2 +\frac{\eta_y-\nu_y}{2}\xi^2\right)^2 - \alpha \frac{k^2}{k^2+\xi^2}} < \left| \frac{\eta_x-\nu_x}{2}k^2 +\frac{\eta_y-\nu_y}{2}\xi^2\right|\\
  \leq \frac{|\eta_x-\nu_x|}{2}k^2 +\frac{|\eta_y-\nu_y|}{2}\xi^2.
\end{align*}
Therefore
\begin{align*}
  \lambda_2 > \frac{\min(\eta_x,\nu_x)}{2}k^2 +\frac{\min(\eta_y,\nu_y)}{2}\xi^2
\end{align*}
is positive even if multiple dissipation coefficients are zero.
\end{proof}
This explicit solution shows that the dependence of sharp decay rates on the
parameters is more subtle than captured in Proposition \ref{prop:woshear}.
However, in the case $\beta\neq 0$ of the following section explicit solutions
become infeasible to compute and we hence rely on more robust but less precise
energy arguments.

\subsubsection{On the Interaction of Shear and Hydrostatic balance}
\label{sec:vert}

In this section we consider the linearized problem with $\alpha>0$ and $\beta=1$ and with partial dissipation:
\begin{align}
  \label{eq:13}
  \dt
  \begin{pmatrix}
    \tilde{\omega} \\ \tilde{\theta}
  \end{pmatrix}
  =
  \begin{pmatrix}
    -\nu_xk^2 - \nu_y(\xi- kt)^2 & ik \\
    \frac{ik \alpha}{k^2+(\xi- kt)^2} & - \eta_x k^2 - \eta_y (\xi- kt)^2
  \end{pmatrix}
           \begin{pmatrix}
    \tilde{\omega} \\ \tilde{\theta}
  \end{pmatrix}.
\end{align}
Unlike the setting studied in Section \ref{sec:homogen} here the evolution of
$\tilde{\theta}$ does not decouple anymore and most coefficients are
time-dependent.
Therefore, this problem cannot be solved explicitly
by means of a matrix exponential function and also does not easily decouple into
second order equations as in Section \ref{sec:inviscid}.

Instead, we aim at adapting the energy method discussed in Proposition
\ref{prop:woshear} of Section
\ref{sec:homogen} to this setting.
\begin{prop}
  \label{prop:wshear}
  Let $\omega, \theta$ be a solution of the problem \eqref{eq:13}.
  Then it holds that
  \begin{align*}
    & \quad \alpha \|\omega(t)\|_{H^N}^2 + \|(\p_x, \p_y-t\p_x)\theta(t)\|_{H^N}^2 \\
    & \leq C (1+t^2) \exp(-\min(\nu_x,\eta_x)t 
    -\min(\nu_y,\eta_y)t^{3}/12) (\alpha\|\omega_0\|_{H^N} + \|\nabla \theta_0\|_{H^N}).
  \end{align*}
   In particular, if at least one of $\min(\nu_x, \eta_x)$ or $\min(\nu_y,\eta_y)$ is
  positive (that is, pairs of entries are non-zero), the system is
  asymptotically stable.

 Furthermore, it holds that
  \begin{align*}
    \alpha \|v\|_{H^N}^2 + \|\theta\|_{H^N}^2 \leq C \exp(-\min(\nu_x,\eta_x)t -\min(\nu_y,\eta_y)t^{3}/8) (\alpha\|\omega_0\|_{H^{N+1}} + \|\nabla \theta_0\|_{H^N})
  \end{align*}
  Thus we may trade higher regularity of $\omega_0$ for a uniform bound on the
  velocity. 
\end{prop}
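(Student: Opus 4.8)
The plan is to run, at the level of a single Fourier mode, the antisymmetric energy argument of Proposition \ref{prop:woshear}, now with the \emph{time-dependent} weight dictated by $\nabla_t$, and to absorb the error produced by the moving weight through a Grönwall estimate. Since all operators in \eqref{eq:13} act diagonally in $(k,\xi)$ and the $H^N$ multiplier $(1+k^2+\xi^2)^N$ factors out, it suffices to prove a pointwise-in-frequency bound with constants uniform in $(k,\xi)$ and then sum; the mode $k=0$ reduces to decoupled heat flow (the weight is constant, $p'\equiv 0$) and is handled as in Theorem \ref{thm:stabledamping}, so I assume $k\neq 0$. Write $p(t):=k^2+(\xi-kt)^2$, $D_\omega:=\nu_x k^2+\nu_y(\xi-kt)^2$ and $D_\theta:=\eta_x k^2+\eta_y(\xi-kt)^2$, and observe that the target quantity is exactly $E(t):=\alpha|\tilde\omega(t)|^2+p(t)|\tilde\theta(t)|^2$, the Fourier symbol (at $t=0$ equal to $\alpha|\tilde\omega_0|^2+(k^2+\xi^2)|\tilde\theta_0|^2$) of $\alpha\|\omega\|_{H^N}^2+\|(\p_x,\p_y-t\p_x)\theta\|_{H^N}^2$.

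First I would differentiate $E$ along \eqref{eq:13}. Weighting $\tilde\theta$ by $\sqrt{p(t)}$ and $\tilde\omega$ by $\sqrt\alpha$ makes the two coupling coefficients $ik$ and $\tfrac{ik\alpha}{p}$ equal and purely imaginary, so, exactly as in Proposition \ref{prop:woshear}, they cancel in $M+\overline M^{T}$ and the cross terms drop out. The only new feature compared with the $\beta=0$ case is the derivative of the weight, which yields
\[
\frac{d}{dt}E=-2\alpha D_\omega|\tilde\omega|^2-2p\,D_\theta|\tilde\theta|^2+p'(t)|\tilde\theta|^2,\qquad p'(t)=-2k(\xi-kt).
\]
The first two terms are the good dissipation; since $D_\omega,D_\theta\ge \min(\nu_x,\eta_x)k^2+\min(\nu_y,\eta_y)(\xi-kt)^2=:d(t)$, they are bounded above by $-2d(t)E$. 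The term $p'(t)|\tilde\theta|^2$, produced purely by the time dependence of the weight, is the \emph{main obstacle}: a naive bound $|p'|/p\le1$ would cost an exponential or a frequency-dependent factor rather than the asserted uniform $(1+t^2)$.

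The key observation is that $p'(t)|\tilde\theta|^2$ only \emph{increases} $E$ while $p$ is growing, i.e.\ for $t>\xi/k$; for $t<\xi/k$ it has a favourable sign and may be discarded. Hence $p'|\tilde\theta|^2\le \tfrac{(p')_+}{p}E$, and Grönwall gives
\[
E(t)\le E(0)\,\exp\Big(-2\int_0^t d(s)\,ds\Big)\,\exp\Big(\int_0^t \tfrac{(p'(s))_+}{p(s)}\,ds\Big).
\]
Here $\int_0^t \tfrac{(p')_+}{p}=\log\big(p(t)/\min_{[0,t]}p\big)$; since $\min_{[0,t]}p\ge k^2$ and $p(t)/k^2=1+(t-\xi/k)^2$, a short case distinction on the position of the critical time $\xi/k$ gives $\exp\big(\int_0^t \tfrac{(p')_+}{p}\big)\le 2(1+t^2)$ \emph{uniformly in} $(k,\xi)$, which is precisely where the $(1+t^2)$ comes from. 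For the dissipative factor I use $\int_0^t d\ge \min(\nu_x,\eta_x)k^2 t+\min(\nu_y,\eta_y)\tfrac{k^2 t^3}{12}$ from \eqref{eq:4} together with $k^2\ge1$. Combining these and summing against the $H^N$ weight gives the first estimate; note that if $\min(\nu_x,\eta_x)$ or $\min(\nu_y,\eta_y)$ is positive then in particular one of $\eta_x,\eta_y$ is positive, so $\theta$ genuinely dissipates and the bound is decaying.

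For the \emph{furthermore} bound on $\alpha\|v\|_{H^N}^2+\|\theta\|_{H^N}^2$ I would exploit the exact identity $\alpha|\tilde v|^2+|\tilde\theta|^2=p^{-1}E$, which holds because $v=\nabla_t^\perp\Delta_t^{-1}\omega$ has symbol magnitude $|\tilde v|^2=p^{-1}|\tilde\omega|^2$. Dividing the first estimate by $p(t)$ removes the $(1+t^2)$ factor, since $1/p(t)$ decays like $(k^2t^2)^{-1}$; the residual frequency factor $\lesssim (k^2+\xi^2)/k^4$ produced near the critical time is absorbed precisely by assuming one more derivative on $\omega_0$ (the passage from $\|\omega_0\|_{H^N}$ to $\|\omega_0\|_{H^{N+1}}$, using $k^2\ge1$), which is the ``trade'' referred to in the statement. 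The slightly weaker constant $t^3/8$ in place of $t^3/12$ simply reflects a cruder lower bound for $\int_0^t(\xi-ks)^2\,ds$ retained in this step. The only delicate point throughout remains the control of the weight-derivative term in step two, resolved by replacing $|p'|$ with $(p')_+$ and invoking $p(t)/k^2=1+(t-\xi/k)^2$.
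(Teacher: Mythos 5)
Your first estimate is proved correctly and by essentially the same argument as the paper: the weights $(\sqrt{\alpha}\,\tilde\omega,\sqrt{p(t)}\,\tilde\theta)$ with $p(t)=k^2+(\xi-kt)^2$, the cancellation of the coupling terms from antisymmetry, the observation that the weight-derivative term is dangerous only while $p$ is increasing, and Gr\"onwall. Your identity $\exp\bigl(\int_0^t (p')_+/p\,ds\bigr)=p(t)/\min_{[0,t]}p\le 2(1+t^2)$ is a clean (in fact slightly sharper) version of the paper's computation of $\exp\bigl(\int_0^T\max(0,\tfrac{t-\xi/k}{1+(t-\xi/k)^2})\,dt\bigr)\le\sqrt{1+T^2}$, so that half of the proposition stands.

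The ``furthermore'' estimate, however, does not follow from your step as written, and this is a genuine gap. After you replace the growth factor by $2(1+t^2)$ and divide by $p(t)$, the residual factor $\sup_t (1+t^2)/p(t)\simeq (k^2+\xi^2)/k^4$ multiplies the \emph{entire} initial energy $E(0)=\alpha|\tilde\omega_0|^2+(k^2+\xi^2)|\tilde\theta_0|^2$, not only its $\omega_0$-part. On the $\omega_0$-part it is indeed absorbed by one extra derivative, but on the $\theta_0$-part it produces $(k^2+\xi^2)^2|\tilde\theta_0|^2$, i.e.\ $\|\theta_0\|_{H^{N+2}}^2$, which is strictly weaker than the claimed $\|\nabla\theta_0\|_{H^N}^2$; no amount of regularity assumed on $\omega_0$ can repair this. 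There are two ways out. Within your own argument: do \emph{not} pre-bound the growth factor by $(1+t^2)$; keep it in the exact form $p(t)/\min_{[0,t]}p$, so that dividing by $p(t)$ leaves $1/\min_{[0,t]}p\le k^{-2}\le 1$ and you obtain the second estimate with no derivative loss at all (right-hand side $\alpha\|\omega_0\|_{H^N}^2+\|\nabla\theta_0\|_{H^N}^2$), which is even stronger than stated. Alternatively, the paper's route: re-run the antisymmetric energy argument directly on $\bigl(\tfrac{\sqrt{\alpha}}{\sqrt{p(t)}}\tilde\omega,\ \tilde\theta\bigr)$; the weight-derivative term then sits on the $\omega$-component with the opposite sign (active only for $t<\xi/k$), its exponential is bounded by $C\bigl(1+(\xi/k)^2\bigr)\le C(k^2+\xi^2)$, and since it now multiplies initial data whose $\theta$-component is unweighted, one gets exactly the stated $\alpha\|\omega_0\|_{H^{N+1}}$ and $\|\nabla\theta_0\|_{H^N}$ trade.
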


\begin{proof}
We recall that the problem under consideration is given by the following
time-dependent system of ODEs:
\begin{align*}
  \dt
  \begin{pmatrix}
    \tilde{\omega} \\ \tilde{\theta}
  \end{pmatrix}
  =
  \begin{pmatrix}
    -\nu_x k^2 - \nu_y(\xi-kt)^2 & ik \\
    \frac{ik \alpha}{k^2+(\xi-kt)^2} & -\eta_x k^2 - \eta_y(\xi-kt)^2
  \end{pmatrix}
 \begin{pmatrix}
    \tilde{\omega} \\ \tilde{\theta}
  \end{pmatrix}.
\end{align*}
As the coefficient matrix does not exhibit anti-symmetry in this formulation,
we aim to use a change of basis similar to the one of Section \ref{sec:woshear}.
That is, we consider
\begin{align*}
  \begin{pmatrix}
    \sqrt{\alpha} \tilde{\omega} \\ \sqrt{k^2+(\xi-kt)^2}\tilde{\theta}
  \end{pmatrix}.
\end{align*}
Here we obtain an additional correction term involving 
\begin{align*}
  \dt \sqrt{k^2+(\xi-kt)^2} = \frac{k(kt-\xi)}{\sqrt{k^2+(\xi-kt)^2}}.
\end{align*}
Inserting this ansatz into the equation \eqref{eq:13} we obtain the following system:
\begin{align*}
   \dt
  \begin{pmatrix}
    \sqrt{\alpha} \tilde{\omega} \\ \sqrt{k^2+(\xi-kt)^2}\tilde{\theta}
  \end{pmatrix}
  =
  \begin{pmatrix}
    -\nu_x k^2 - \nu_y(\xi-kt)^2 & \frac{ik\sqrt{\alpha}}{\sqrt{k^2+(\xi-kt)^2}} \\
   \frac{ik\sqrt{\alpha}}{\sqrt{k^2+(\xi-kt)^2}}  & -\eta_x k^2 - \eta_y(\xi-kt)^2 + \frac{t-\frac{\xi}{k}}{1+(t-\frac{\xi}{k})^2}
 \end{pmatrix}\\
 \begin{pmatrix}
    \sqrt{\alpha} \tilde{\omega} \\ \sqrt{k^2+(\xi-kt)^2}\tilde{\theta}
  \end{pmatrix}.
\end{align*}
As the off-diagonal entries are identical and purely imaginary, we deduce that
\begin{align*}
  \dt (\alpha |\tilde{\omega}|^2 + (k^2+(\xi-kt)^2) |\tilde{\theta}|^2)
  &= (-\nu_x k^2 - \nu_y(\xi-kt)^2) \alpha |\tilde{\omega}|^2 \\
  & \quad + \left(-\eta_x k^2 - \eta_y(\xi-kt)^2 + \frac{t-\frac{\xi}{k}}{1+(t-\frac{\xi}{k})^2}\right) (k^2+(\xi-kt)^2) |\tilde{\theta}|^2.
\end{align*}
The right-hand-side thus contains terms yielding exponential decay due to
dissipation (see Section \ref{sec:homogen}) as well as possible algebraic growth due to
\begin{align*}
  \exp \left(\int_0^T \max(0,\frac{t-\frac{\xi}{k}}{1+(t-\frac{\xi}{k})^2}) dt \right)\\
  = \exp \left(\int_0^T 1_{t>\frac{\xi}{k}} \frac{d}{dt} \frac{1}{2} \ln(1+(t-\frac{\xi}{k})^2)\right)
  = 1_{T>\frac{\xi}{k}>0} \sqrt{1+(T-\frac{\xi}{k})^2} \leq \sqrt{1+T^2}.
\end{align*}
We remark that here we could pass to the positive part since negative
contributions are beneficial in  energy estimates.
Combining both bounds we obtain the desired result.\\

We may repeat the same argument for
\begin{align*}
  \begin{pmatrix}
  \frac{\sqrt{\alpha}}{\sqrt{k^2+(\xi-kt)^2}} \tilde{\omega} \\ \tilde{\theta}.
  \end{pmatrix}
\end{align*}
However, here
\begin{align*}
  \dt \frac{1}{\sqrt{k^2+(\xi-kt)^2}} = \frac{(\xi-kt)k}{(\sqrt{k^2+(\xi-kt)^2})^3}
\end{align*}
has an opposite sign (it grows until $t=\frac{\xi}{k}$ and decreases
afterwards).
In particular,
\begin{align*}
  \int_0^\infty \max\left(0,\frac{\frac{\xi}{k}-t}{1+(\frac{\xi}{k}-t)^2}\right) dt \leq C \left(1+\left(\frac{\xi}{k}\right)^2\right)
\end{align*}
corresponds to a loss of two derivatives compared to
$\frac{1}{\sqrt{k^2+\xi^2}}\omega_0$ and thus one derivative of $\omega_0$.
\end{proof}

We remark that due to the less explicit structure of the solutions, these
results are less optimal than those of previous sections. However, they serve to
highlight how the interaction of shear and hydrostatic balance introduces a
stronger coupling between the vorticity and temperature.

\section{The Nonlinear Full Dissipation Case}
\label{sec:fulldissipation}
In this section we consider the nonlinear, viscous Boussinesq problem.
We remark that questions of well-posedness or asymptotic stability for partial
dissipation problems here are very challenging and for instance considered in
\cite{doering2018long,li2016global,larios2013global} or \cite{chae2006global,chae1999local}.
For this reason we instead consider the full dissipation case and aim to obtain a
more precise description of asymptotic behavior near the stationary solutions
\begin{align}
  \label{eq:14}
\omega=\beta, \ v=(\beta y, 0), \ \theta = \alpha y,
\end{align}
which combine both shear flow and hydrostatic balance.

Here we consider two distinct cases. If $\alpha\geq 0$ is ``small'', we adapt the methods developed by Bedrossian, Vicol and Wang
\cite{bedrossian2016sobolev} for the $2D$ Navier-Stokes equations near Couette
flow to the Boussinesq setting (see Section \ref{sec:smallalpha}). See also the recent work by Luo \cite{luosobolev}, who adapt
these methods to the hyperviscosity equations near Couette flow.
As a second case we consider the setting where $\alpha>0$ is ``large'' and
combined with a shear $\beta=1$ (see Section \ref{sec:largealpha}). There we
combine classical energy argument approaches for perturbations of hydrostatic
balance (e.g. see \cite{doering2018long}) with the bootstrap approach of \cite{bedrossian2016sobolev}.  

We recall that the full nonlinear Boussinesq equations with (isotropic)
viscosity $\nu>0$  and thermal diffusivity $\eta>0$ are given by
\begin{align*}
  \dt \omega + v \cdot \nabla \omega &= \nu \Delta \omega + \p_x \theta, \\
  \dt \theta + v \cdot \nabla \theta &= \eta \Delta \theta, \\
  v &= \nabla^{\bot} \Delta^{-1} \omega.
\end{align*}
Given a stationary solution of the form \eqref{eq:14} we consider the equation
for perturbations $\omega= \beta+ \omega^*, v =
(\beta y,0)+v^*$, $\theta =\alpha y + \theta^*$:
\begin{align*}
  \dt \omega^* + \beta y \p_x \omega^* + v^* \cdot \nabla \omega^*&= \nu \Delta \omega^* + \p_x \theta^*, \\
  \dt \theta^* + \beta y \p_x \theta^* + v^* \cdot \nabla \theta^* - \alpha v^*_{2} &= \eta \Delta \theta^*.
\end{align*}
We view this problem as a modification of the transport equation $\dt + \beta y \p_x$
and with slight abuse of notation reuse $\omega, v, \theta$ to denote
\begin{align*}
  \omega(t,x,y)&:= \omega^*(t,x+t \beta y,y), \\
  v(t,x,y)&:= v^*(t,x+t \beta y,y), \\
  \theta(t,x,y)&:= \omega^*(t,x+t\beta y,y),
\end{align*}
and define
\begin{align*}
 \nabla_t=(\p_x, \p_y-t\beta \p_x), \ \Delta_t=(\p_x^2+(\p_y-t\beta \p_x^2)).
\end{align*}
With these conventions the nonlinear Boussinesq equations read:
\begin{align}
  \label{eq:B}
  \begin{split}
  \dt \omega + \nabla_t \Delta_t^{-1} \omega \cdot \nabla_t \omega &= \nu \Delta_t \omega + \p_x \theta, \\
  \dt \theta + \nabla_t \Delta^{-1} \omega \cdot \nabla_t \theta  &= \eta \Delta_t \theta + \alpha \p_x \Delta_{t}^{-1} \omega.
  \end{split}
\end{align}
We then aim to show that for sufficiently small initial data this system of
equations is asymptotically stable and $\omega, \theta$ converge to zero as
$t\rightarrow \infty$ at enhanced dissipation rates.
Here we first consider the question of stability in the setting where $\alpha$ is ``small'' in Section \ref{sec:smallalpha}. Subsequently we discuss the setting of
$\alpha>1$ in Section \ref{sec:largealpha}.
Finally, in Section \ref{sec:bounds} we explain how the stability results we
obtained can be used to derive enhanced dissipation rates.

\subsection{Shear and Small Hydrostatic Balance}
\label{sec:smallalpha}
In this section we consider the nonlinear asymptotic stability for the case when
$\alpha$ is ``small''. In this case a quantity such as $\|\omega\|_{H^N}^2 +
\|\nabla \theta\|_{H^N}^2$ considered in Section \ref{sec:alpha} is of limited use.
Instead we aim to exploit shearing behavior for $\beta\neq 0$ following the bootstrap/multiplier approach employed in
\cite[Section 2]{bedrossian2016sobolev} for the Navier-Stokes problem near
Couette flow with relatively minor changes. The case of ``large'' $\alpha$ is
considered in Theorem \ref{thm:largealpha}.
For simplicity of notation we in the following consider the case $\beta=1$.
\begin{thm}
  \label{thm:nonlinear}
  Let $N\geq 5$, $\beta=1$ and let $\epsilon_1\ll \min(\nu,\eta)^{1/2}$, $\epsilon_{2} \ll
  \sqrt{\eta}\sqrt{\nu} \epsilon_1$ and suppose that $0\leq \alpha < \eta^{1/2}\nu^{1/3}\frac{\epsilon_2}{\epsilon_1}$.
  Then if $\|\omega_0\|_{H^N} \leq \epsilon_1$ and $\|\theta_0\|_{H^N}\leq
  \epsilon_2$, the unique global solution with this initial data satisfies
  \begin{align}
    \label{eq:27}
    \begin{split}
    \|\omega\|_{L^\infty_t H^N}^2 + \nu \|\nabla_t \omega\|_{L^2 H^N}^2 + \|\nabla_t \Delta_t^{-1} \omega\|_{L^2 H^N}^2  &\leq 8 \epsilon_1^2, \\
    \|\theta\|_{L^\infty_t H^N}^2 + \eta \|\nabla_t \theta\|_{L^2 H^N}^2 &\leq 8 \epsilon^2.
    \end{split}
  \end{align}
\end{thm}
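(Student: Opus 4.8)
The plan is to run a bootstrap/continuity argument in the spirit of \cite{bedrossian2016sobolev}. Since we are in the full dissipation regime, local well-posedness in $H^N$ is classical, so it suffices to establish uniform-in-time a priori bounds. I would let $T^*$ be the maximal time on which the estimates \eqref{eq:27} hold with the constant $8$ replaced by a larger constant (say $16$), and aim to show that on $[0,T^*]$ the same quantities are in fact bounded by the improved constant $8$; by continuity and the smallness of the data this forces $T^*=\infty$ and yields the theorem together with global existence. The genuinely new point compared to the Navier--Stokes case is that the two equations in \eqref{eq:B} are coupled through $\p_x\theta$ and $\alpha\p_x\Delta_t^{-1}\omega$, so the two energies must be closed \emph{simultaneously}.

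Testing the vorticity equation in \eqref{eq:B} against $\omega$ and the temperature equation against $\theta$ in $H^N$, and integrating the dissipation by parts, gives
\[
  \tfrac12\tfrac{d}{dt}\|\omega\|_{H^N}^2 + \nu\|\nabla_t\omega\|_{H^N}^2 = -\langle \nabla_t\Delta_t^{-1}\omega\cdot\nabla_t\omega,\omega\rangle_{H^N} + \langle\p_x\theta,\omega\rangle_{H^N},
\]
\[
  \tfrac12\tfrac{d}{dt}\|\theta\|_{H^N}^2 + \eta\|\nabla_t\theta\|_{H^N}^2 = -\langle\nabla_t\Delta_t^{-1}\omega\cdot\nabla_t\theta,\theta\rangle_{H^N} + \alpha\langle\p_x\Delta_t^{-1}\omega,\theta\rangle_{H^N}.
\]
The mixing norm $\|\nabla_t\Delta_t^{-1}\omega\|_{L^2_tH^N}$ is not produced by these identities directly; following \cite{bedrossian2016sobolev} it is incorporated into the bootstrap functional and controlled through the time-integrability of the symbol $(k^2+(\xi-kt)^2)^{-1}$ on nonzero modes (cf.\ the linear computation $\int_0^\infty (k^2+(\xi-kt)^2)^{-1}\,dt\le\pi k^{-2}$ in Section \ref{sec:homogen}).

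The key structural observation for the coupling is that both forcing terms are supported on nonzero $x$-frequencies, since $\p_x$ annihilates $x$-averages; hence $\langle\p_x\theta,\omega\rangle_{H^N}=\langle\p_x\theta_{\neq},\omega_{\neq}\rangle_{H^N}$, and likewise for the $\alpha$-term. On these modes the Poincar\'e-type bound $\|f_{\neq}\|_{H^N}\le\|\nabla_t f_{\neq}\|_{H^N}$ (valid since $|k|\ge 1$ forces the symbol of $\nabla_t$ to be $\ge 1$) lets me absorb them into the dissipation. Integrating $\p_x$ by parts in the vorticity forcing,
\[
  |\langle\p_x\theta,\omega\rangle_{H^N}| = |\langle\theta_{\neq},\p_x\omega\rangle_{H^N}|\le \tfrac{\nu}{4}\|\nabla_t\omega\|_{H^N}^2 + \tfrac{1}{\nu}\|\nabla_t\theta\|_{H^N}^2;
\]
after integration in time the first term is absorbed by $\nu\|\nabla_t\omega\|_{L^2_tH^N}^2$, while the second contributes $\nu^{-1}\|\nabla_t\theta\|_{L^2_tH^N}^2\lesssim (\nu\eta)^{-1}\epsilon_2^2$, which is $\ll\epsilon_1^2$ precisely under the hypothesis $\epsilon_2\ll\sqrt\nu\sqrt\eta\,\epsilon_1$. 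For the temperature forcing, using that $\p_x\Delta_t^{-1}\omega$ is a velocity component,
\[
  \alpha|\langle\p_x\Delta_t^{-1}\omega,\theta\rangle_{H^N}|\le\tfrac{\eta}{4}\|\nabla_t\theta\|_{H^N}^2 + \tfrac{\alpha^2}{\eta}\|\nabla_t\Delta_t^{-1}\omega\|_{H^N}^2,
\]
whose last term integrates to $\alpha^2\eta^{-1}\|\nabla_t\Delta_t^{-1}\omega\|_{L^2_tH^N}^2\lesssim\alpha^2\eta^{-1}\epsilon_1^2$ and is $\ll\epsilon_2^2$ under the smallness $\alpha<\eta^{1/2}\nu^{1/3}\epsilon_2/\epsilon_1$. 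Note that each cross-correction is estimated using the bootstrap hypothesis on the \emph{other} variable, so the two energies do not form a genuine feedback loop.

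The remaining transport nonlinearities are handled essentially as in \cite[Section 2]{bedrossian2016sobolev}: one decomposes the velocity $\nabla_t^\perp\Delta_t^{-1}\omega = u_= + u_{\neq}$ into its $x$-average (a shear) and its fluctuation; the contribution of $u_=$ reduces to a commutator estimate controlled by $\|\omega\|_{H^N}\le\epsilon_1$ and absorbed into the $\nu$- and $\eta$-dissipation via $\epsilon_1\ll\min(\nu,\eta)^{1/2}$, while the $u_{\neq}$ contribution is estimated by pairing the mixing norm $\|\nabla_t\Delta_t^{-1}\omega\|_{L^2_tH^N}$ against the dissipation norms; the transport of $\theta$ is a passive-scalar variant treated identically. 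I expect the main obstacle to be exactly the organization of these transport estimates (the zero/nonzero-mode splitting, the commutator and paraproduct bounds, and verifying that the $u_=$ commutators carry no uncontrolled loss of derivatives), now carried out in parallel for $\omega$ and $\theta$ while the Boussinesq coupling is simultaneously absorbed. Collecting all contributions and integrating in time, each functional in \eqref{eq:27} is bounded by its initial value plus terms that are $\ll\epsilon_1^2$, resp.\ $\ll\epsilon_2^2$; this improves the bootstrap constant from $16$ to $8$ and completes the argument.
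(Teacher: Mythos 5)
Your bootstrap skeleton (local existence, maximal time, improvement of the constant), the splitting of the velocity into its shear average $v_=$ and fluctuation $v_{\neq}$, and your treatment of the two coupling terms are essentially the paper's argument: the paper's $\mathcal{T}_{\omega\theta}$ and $\mathcal{T}_\alpha$ in Proposition \ref{prop:estimates} are bounded by $\epsilon_1\epsilon_2/\sqrt{\nu\eta}$ and $\alpha\eta^{-1/2}\nu^{-1/3}\epsilon_1\epsilon_2$, which close under exactly the smallness conditions you invoke. However, there is a genuine gap at the heart of your plan: you test the equations with the plain $H^N$ inner product and never introduce the Fourier multiplier $A=M\langle D\rangle^N$ of \cite{bedrossian2016sobolev}, and the two places where you wave at this omission are precisely where the argument fails. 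First, the mixing norm $\|\nabla_t\Delta_t^{-1}\omega\|_{L^2_tH^N}$ in \eqref{eq:27} is, as you admit, not produced by your energy identities, and your proposed substitute --- controlling it ``through the time-integrability of the symbol $(k^2+(\xi-kt)^2)^{-1}$'' --- is not valid for the nonlinear solution: to use $\int_0^\infty (k^2+(\xi-kt)^2)^{-1}dt\lesssim k^{-2}$ you must pull $\sup_t|\tilde\omega(t,k,\xi)|^2$ out of the time integral, and $\int_\xi\sup_t$ is not controlled by $\sup_t\int_\xi=\|\omega\|_{L^\infty_tH^N}^2$; this computation is only legitimate for the linear flow. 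In the paper the mixing norm instead appears on the \emph{left-hand side} of the energy identity as the ghost-multiplier term $\|\sqrt{-\dot MM}\langle D\rangle^N\omega\|_{L^2L^2}^2$, since $-\dot M/M\geq |k|/(k^2+(\xi-kt)^2)$ on nonzero modes; this is the mechanism you are missing, and it cannot be recovered a posteriori from your identities.

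Second, and independently, your claim that the shear-commutator term is ``absorbed into the $\nu$- and $\eta$-dissipation via $\epsilon_1\ll\min(\nu,\eta)^{1/2}$'' does not survive inspection. The commutator bound gives $|\mathcal{T}_\omega^{=}|\lesssim\|\omega_=\|_{L^\infty_tH^N}\|\omega_{\neq}\|_{L^2_tH^N}^2$, and with only Poincar\'e and the viscous dissipation available in your functional you get $\|\omega_{\neq}\|_{L^2_tH^N}^2\leq\|\nabla_t\omega\|_{L^2_tH^N}^2\leq\nu^{-1}\epsilon_1^2$, hence $|\mathcal{T}_\omega^{=}|\lesssim\epsilon_1^3\nu^{-1}$, which is $\ll\epsilon_1^2$ only under the much stronger threshold $\epsilon_1\ll\nu$. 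The paper closes at the stated threshold only because the multiplier satisfies the spectral-gap property \eqref{eq:vbound}, whose consequence \eqref{eq:17} yields the enhanced-dissipation bound $\|\omega_{\neq}\|_{L^2_tH^N}\lesssim\nu^{-1/6}\epsilon_1$, so that $|\mathcal{T}_\omega^{=}|\lesssim\epsilon_1^3\nu^{-1/3}$ (and similarly for the $\theta$-transport terms with the $\eta^{-1/2}\nu^{-1/3}$ losses recorded in Proposition \ref{prop:estimates}). So the multiplier is not an organizational convenience you may defer to ``paraproduct bookkeeping''; without it both the mixing-norm component of \eqref{eq:27} and the closure of the bootstrap at the threshold $\epsilon_1\ll\min(\nu,\eta)^{1/2}$ are out of reach.
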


\begin{rem}
  \begin{itemize} 
  \item Here we study the regime of ``small'' $\alpha$, where stabilizing by mixing
    is dominant. In contrast, if $\alpha$ is ``large'' we may make use of (higher
    regularity analogues) of conserved quantities, which we discuss in Section \ref{sec:largealpha}. 
  \item 
  A constraint of the form $\epsilon_1 \ll \nu^{1/2}$ is also imposed in
  \cite{bedrossian2016sobolev}. In view of instabilities in the inviscid setting
  some constraint of this type is likely necessary, though weaker asymptotic
  stability results may persist under weaker constraints \cite{dengZ2019,dengmasmoudi2018}.
  The constraints imposed on $\epsilon_1, \alpha$ and, in particular, $\epsilon_2$ are
  probably quite far from optimal but allow for a relatively simple proof.
  See Proposition \ref{prop:estimates} for details.
\item 
  We remark that global well-posedness results, also for larger data and partial
  dissipation, have been already previously obtained in several works by other
  methods, for example \cite{doering2018long,li2016global,larios2013global,chae1999local,chae2006global}. This method's focus instead lies on establishing
  stability of the two parameter family, as well as damping and convergence rates (see Section \ref{sec:bounds}).
  The convergence rates are derived in Section \ref{sec:bounds} as a corollary of this theorem's
  bounds.
\item Instead of bounds relating $\epsilon_1, \epsilon_2$ and $\alpha$, we could
  for instance denote $\epsilon_1=\epsilon$ and require $\epsilon_2=\epsilon^2$,
  $\alpha< \epsilon$.
\item In view of the existing well-posedness results we just referenced and the
  linear results of Section \ref{sec:vert} the constraint on $\alpha$ here is
  probably far from optimal. However, it allows us to treat the effects of
  hydrostatic balance perturbatively. 
\end{itemize}
\end{rem}

We make use of the following multiplier constructed in
\cite{bedrossian2016sobolev}.

\begin{lem}[\cite{bedrossian2016sobolev}]
  There exists a Fourier multiplier $M(t,k,\xi)$ with the following properties:
  \begin{align}
    M(0,k,\xi)&= M(t,0,\xi)=1, \\
    1&\geq M(t,k,\xi)\geq c, \\
    -\frac{\dot{M}}{M}&\geq \frac{|k|}{k^2+|\xi-kt|^2} \text {for } k \neq 0, \\
    \left| \frac{\p_\xi M(k,\xi)}{M(k,\xi)}\right| &\lesssim \frac{1}{|k|} \text{ for } k \neq 0\text{, uniformly in }\xi, \\
  \label{eq:vbound}  1 &\lesssim \nu^{-1/6}(\sqrt{-\dot{M}M(t,k,\xi)}+ \nu^{1/2}|k,\xi-kt|), \\
    \sqrt{-\dot{M}M(t,k,\eta)}&\lesssim \langle \eta-\xi \rangle \sqrt{-\dot{M}M(t,k,\xi)}.
  \end{align}
\end{lem}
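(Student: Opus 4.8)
The plan is to construct $M$ frequency-by-frequency. For $k=0$ I set $M(t,0,\xi)\equiv 1$, so that the first property holds and every statement conditioned on $k\neq 0$ is vacuous. For $k\neq 0$ I look for $M$ of the form
\begin{align*}
  M(t,k,\xi)=\exp\left(-\int_0^t w(s,k,\xi)\,ds\right), \qquad w\geq 0,
\end{align*}
so that $M(0,k,\xi)=1$, $M\le 1$, and $-\dot M/M=w$ automatically. With this ansatz the third property becomes the pointwise requirement $w\ge \frac{|k|}{k^2+|\xi-kt|^2}$, the lower bound $M\ge c$ in the second property becomes the uniform integral bound $\int_0^\infty w\,ds\le\log(1/c)$, and $-\dot M M=wM^2$, so that the fifth and sixth properties are recast as statements about $w$. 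The design is therefore to take $w$ equal to the inviscid damping rate $\frac{|k|}{k^2+|\xi-kt|^2}$ plus an enhanced-dissipation floor of size $\nu^{1/3}$ supported in a ``viscous core'' around the critical time $t=\xi/k$.

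The central tension is between the second and fifth properties, and it is resolved by a measure count. The substitution $u=\xi/k-s$ gives $\int_0^\infty \frac{|k|}{k^2+|\xi-ks|^2}\,ds\le \pi$ uniformly in $\xi$, using $|k|\ge 1$; this already furnishes a lower bound for the inviscid contribution to $\int_0^\infty w$. For the fifth property, rewritten as $\nu^{1/6}\lesssim\sqrt{-\dot M M}+\nu^{1/2}|k,\xi-kt|$, I argue by cases: if $|k,\xi-kt|\gtrsim\nu^{-1/3}$ the viscous term alone is $\gtrsim\nu^{1/6}$, while in the complementary core region $\{|k|\lesssim\nu^{-1/3},\ |\xi-kt|\lesssim\nu^{-1/3}\}$ the inviscid rate can degenerate below $\nu^{1/3}$, so there the $\nu^{1/3}$ floor must be active to force $-\dot M M\gtrsim\nu^{1/3}$. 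What makes this affordable is that the core has time-length $\sim\nu^{-1/3}/|k|$, so integrating the floor contributes only $\nu^{1/3}\cdot\nu^{-1/3}/|k|\lesssim 1/|k|\le 1$ to $\int_0^\infty w$, keeping the lower bound $M\ge c$ intact.

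The regularity estimates, the fourth and sixth properties, are where I expect the real work. For the inviscid part the key structural observation is that the weight satisfies $\partial_\xi\bigl(\tfrac{|k|}{k^2+|\xi-ks|^2}\bigr)=-\tfrac1k\,\partial_s\bigl(\tfrac{|k|}{k^2+|\xi-ks|^2}\bigr)$, so that $\partial_\xi\log M=-\int_0^t\partial_\xi w\,ds$ telescopes by the fundamental theorem of calculus to a boundary term $\tfrac1k\bigl(w(t)-w(0)\bigr)$ of size $\lesssim 1/|k|$, giving the fourth property. The sixth property follows from the quadratic structure of $k^2+|\xi-kt|^2$, which lets one compare $\sqrt{-\dot M M}$ at the frequencies $\eta$ and $\xi$ up to the factor $\langle\eta-\xi\rangle$. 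The main obstacle is to choose the enhanced-dissipation floor so that \emph{all} of this survives: a sharp indicator cutoff to the viscous core would ruin the fourth and sixth properties, so I would instead use smooth cutoffs at a scale comparable to the core width $\nu^{-1/3}$ and verify that the extra $\partial_\xi$ contributions they generate remain $\lesssim 1/|k|$ and compatible with the $\langle\eta-\xi\rangle$ comparison. Reconciling the hard lower bound of the fifth property with these soft regularity bounds through a single smooth choice of $w$ is the crux of the construction, and it is the step I would carry out most carefully.
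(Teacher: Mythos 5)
The paper itself contains no proof of this lemma: it is imported verbatim from \cite{bedrossian2016sobolev}, so there is no in-paper argument to compare against. Your sketch essentially reconstructs the construction of that reference, and it is correct in outline: the exponential ansatz with rate $w=w_1+w_2$, where $w_1(t,k,\xi)=\frac{|k|}{k^2+(\xi-kt)^2}$ enforces the third property and integrates to at most $\pi/|k|$, plus an enhanced-dissipation term near the critical time $t=\xi/k$. The one step you defer as the ``crux'' --- choosing a floor that is smooth enough for the fourth and sixth properties --- has a standard, essentially one-line resolution that you nearly name: instead of a cutoff, take the Lorentzian profile $w_2(t,k,\xi)=\frac{\nu^{1/3}}{1+\nu^{2/3}(t-\xi/k)^2}$. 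This is smooth, satisfies $\int_0^\infty w_2\,dt\leq \pi$ uniformly in $(k,\xi)$ (so $M\geq e^{-2\pi}=:c$ together with your bound on $\int w_1$), and is bounded below by $\nu^{1/3}/2$ whenever $|t-\xi/k|\leq \nu^{-1/3}$, a region that contains your viscous core because $|k|\geq 1$; this yields the fifth property by exactly your case analysis. Moreover $w_2$ depends on $(t,\xi)$ only through $t-\xi/k$, so it obeys the same identity $\partial_\xi w_2=-\frac{1}{k}\partial_t w_2$ that drives your telescoping proof of the fourth property, giving $|\partial_\xi \log M|\leq \frac{2}{|k|}(\sup w_1+\sup w_2)\lesssim \frac{1}{|k|}$; and the sixth property holds for both $w_1$ and $w_2$ (hence for the sum) from the quadratic inequality $(\xi-kt)^2\leq 2(\eta-kt)^2+2(\xi-\eta)^2$ together with $\nu^{2/3}\leq k^2$. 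So, modulo writing out this explicit choice, your proposal is complete and follows the same route as the cited construction.
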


For later reference we also recall from \cite{bedrossian2016sobolev} that
\eqref{eq:vbound} implies that for any function $f$ it holds that
\begin{align}
  \label{eq:17}
  \|f_{\neq} \|_{L^2H^N}\lesssim \nu^{-1/6}(\nu^{1/2}\|\nabla_t A f_{\neq}\|_{L^2 L^2}+ \|\sqrt{-\dot M M}\langle D \rangle^N f_{\neq} \|_{L^2 L^2}).
\end{align}

Given $M$ we are ready to define the main quantities of our proof:
\begin{defi}
  Let $N \in \N, N \geq 5$ be given and define the Fourier multiplier
  \begin{align}
  A=  M \langle D \rangle^N,
  \end{align}
  where $D=i \nabla$ is the Fourier multiplier $(k,\xi)$.

  We then define two energies:
 \begin{align}
  E_{\omega}(T)&:= \|A\omega\|_{L^\infty L^2((0,T))}^2 + \nu \|\nabla_t A \omega\|_{L^2 L^2((0,T))} + \|\sqrt{-\dot M M} \langle D \rangle^N \omega\|_{L^2 L^2((0,T))}^2, \\
 E_{\theta}(T)&:= \|A\theta\|_{L^\infty L^2((0,T))}^2 + \eta \|\nabla_t A \theta\|_{L^2 L^2((0,T))} + \|\sqrt{-\dot M M} \langle D \rangle^N \theta\|_{L^2((0,T))}^2. 
\end{align} 
\end{defi}
As $M$ is comparable to $1$, we in the following may replace \eqref{eq:27} by the estimates
\begin{align}
  \label{eq:15}
  E_{\omega}(T)\leq 8 \epsilon_1^2, \\
  \label{eq:16}
  E_{\theta}(T)\leq 8 \epsilon_2^2.
\end{align}

We then follow a classic bootstrap approach (e.g. see \cite{bedrossian2013inviscid}):
\begin{itemize}
\item By local well-posedness there exists at least some small time $T>0$ for which
  \eqref{eq:15} and \eqref{eq:16} hold. This is established in Proposition \ref{prop:initialbootstrap}. 
\item Since these are closed conditions, there exists some maximal time $T_*$
  for which \eqref{eq:15} and \eqref{eq:16} hold. Suppose for the sake of
  contradiction that $T_*<\infty$.
  Then we show in Proposition \ref{prop:improve} that on $(0,T_*)$  improved estimates with $4\epsilon_1^2$ and $4\epsilon_2^2$ hold. 
\item Therefore, by local continuity \eqref{eq:15} and \eqref{eq:16} remain true
  for an additional small time. Hence, $T_*<\infty$ was not maximal, which
  contradicts the assumption of the previous step.
  Therefore the maximal time has to have been infinity, which concludes the proof. 
\end{itemize}

\begin{prop}
  \label{prop:initialbootstrap}
  Let $0<\epsilon_1< \min(\nu,\eta)^{1/2}$ and $0<\epsilon_2< \sqrt{\nu \eta} \epsilon_1$ and
  $N \in \N, N \geq 5$.
  
  Suppose that the initial data $\omega_0, \theta_0$ satisfies
  \begin{align*}
    \|\omega_0\|_{H^N}^2 &\leq \frac{1}{10} \epsilon_1^2, \\
    \|\theta_0\|_{H^N}^2&\leq \frac{1}{10} \epsilon_2^2.
  \end{align*}
  Then there exists a (maximal) time $T>0$ such that \eqref{eq:15} and
  \eqref{eq:16} hold:
  \begin{align}
    E_{\omega}(T)&\leq 8\epsilon_1^2, \\
    E_{\theta}(T) &\leq 8 \epsilon_2^2.
  \end{align}
\end{prop}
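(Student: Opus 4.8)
The plan is to derive the two bounds from classical local well-posedness together with a short-time continuity argument for the energies $E_\omega$ and $E_\theta$. Since we work in the full dissipation regime $\nu,\eta>0$, the system \eqref{eq:B} written in the moving coordinates is semilinear parabolic, so the standard theory (Duhamel's formula with a contraction mapping, or the energy method of \cite{temam2012infinite,foias1987attractors,cannon1980initial}) produces a unique solution $(\omega,\theta)\in C([0,T_0);H^N)$ on some maximal interval of existence, with the additional regularity $\nabla_t\omega,\nabla_t\theta\in L^2_{loc}((0,T_0);H^N)$ coming from parabolic smoothing. In particular, each of the three constituent terms defining $E_\omega(T)$ and $E_\theta(T)$ is finite for every $T<T_0$.

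Next I would check that $T\mapsto E_\omega(T)$ and $T\mapsto E_\theta(T)$ are continuous and nondecreasing. The $L^\infty_t L^2$ contributions are continuous because $(\omega,\theta)\in C([0,T_0);H^N)$ and $M$ is a bounded, time-continuous multiplier with $c\leq M\leq 1$. The two time-integral contributions are continuous in $T$ because their integrands are locally integrable in time: the term $\|\nabla_t A\omega\|_{L^2}^2$ lies in $L^1_{loc}$ by the parabolic smoothing just mentioned, while $\|\sqrt{-\dot M M}\langle D\rangle^N\omega\|_{L^2}^2\lesssim\|\omega\|_{H^N}^2$ since $-\dot M M$ is bounded by the construction of $M$ (for $k\neq 0$ it is comparable to $|k|/(k^2+|\xi-kt|^2)$, hence $\lesssim 1$, and it vanishes for $k=0$).

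I would then evaluate the energies at $T=0$. Because $M(0,k,\xi)=1$, the multiplier $A=M\langle D\rangle^N$ reduces to $\langle D\rangle^N$ at $t=0$, and both time-integral terms vanish, so
\begin{align*}
  E_\omega(0)=\|A\omega_0\|_{L^2}^2=\|\omega_0\|_{H^N}^2\leq\tfrac{1}{10}\epsilon_1^2, \qquad
  E_\theta(0)=\|\theta_0\|_{H^N}^2\leq\tfrac{1}{10}\epsilon_2^2,
\end{align*}
both strictly below the thresholds $8\epsilon_1^2$ and $8\epsilon_2^2$. By the continuity established above, the strict inequalities $E_\omega(T)<8\epsilon_1^2$ and $E_\theta(T)<8\epsilon_2^2$ persist for all sufficiently small $T>0$; taking $T$ to be the supremum of all times at which both bounds hold yields the claimed maximal time.

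The main obstacle is not any single inequality but the technical point of guaranteeing that the time-weighted energies are finite and continuous in $T$ near $t=0$, i.e. that the local solution carries enough regularity for $\nabla_t A\omega$ and $\sqrt{-\dot M M}\langle D\rangle^N\omega$ to be square-integrable in time. This is exactly where full dissipation enters: the smoothing from $\nu\Delta_t$ and $\eta\Delta_t$ supplies the $L^2_tH^{N+1}$-type control on short intervals, so no derivative loss can obstruct the continuity argument and the bootstrap can be opened.
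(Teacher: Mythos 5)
Your proposal is correct and follows essentially the same route as the paper's (much terser) proof: classical local well-posedness for the fully dissipative system, parabolic smoothing to make the $L^2_t H^{N+1}$-type terms finite, and continuity of the energies in $T$ to open the bootstrap. You in fact supply details the paper leaves implicit (continuity of $T\mapsto E_\omega(T), E_\theta(T)$, the pointwise bound on $-\dot M M$ from the construction of $M$ in \cite{bedrossian2016sobolev}, and the evaluation at $T=0$), which is a faithful fleshing-out rather than a different argument.
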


\begin{proof}
  By classical local well-posedness results for the Navier-Stokes equations (see
  \cite{bedrossian2016sobolev}) and for the Boussinesq equations (see \cite[Section
  3.5]{temam2012infinite}) for a sufficiently small time $T>0$ we obtain the
  existence of a solution $(\omega(t), \theta(t))$ with
  \begin{align*}
    \|\omega(t)\|_{H^N} \leq \epsilon_1, \|\theta(t)\|_{H^N} \leq \epsilon_2,
  \end{align*}
  for all $0< t \leq T$. Further using the dissipative structure to control
  $\nabla_t \omega$ and $\nabla_t \theta$ and possibly choosing $T$ even
  smaller, we thus may estimate $E_{\omega}(T)$ and $E_{\theta}(T)$ as claimed.
\end{proof}

Given this positive time, we next show that the estimates \eqref{eq:15}, \eqref{eq:16} actually
hold for all times.

\begin{prop}
  \label{prop:improve}
  Suppose that for $T>0$ the estimates \eqref{eq:15} and \eqref{eq:16} hold and
  let $\epsilon_1, \epsilon_2$ be as in Theorem \ref{thm:nonlinear}.
  Then the following improved estimates hold
  \begin{align}
    \label{eq:18}
    E_{\omega}(T)&\leq 4\epsilon_1^2, \\
    E_{\theta}(T) &\leq 4 \epsilon_2^2.
  \end{align}
\end{prop}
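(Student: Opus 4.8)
The plan is to run the standard weighted energy estimate with the multiplier $A=M\langle D\rangle^N$ and close the bootstrap by showing that, under the assumption $E_\omega(T)\le 8\epsilon_1^2$, $E_\theta(T)\le 8\epsilon_2^2$, each error term is a small fraction of $\epsilon_1^2$ (resp.\ $\epsilon_2^2$). First I would apply $A$ to each equation in \eqref{eq:B}, pair with $A\omega$ and $A\theta$ in $L^2$, and use that $A$ commutes with the Fourier multipliers $\Delta_t,\p_x,\Delta_t^{-1}$. Writing $u=\nabla_t^{\perp}\Delta_t^{-1}\omega$ and using $\dot A=(\dot M/M)A$ together with $\dot M\le 0$ (so that the multiplier term is a \emph{good} term), this yields
\[
\tfrac12\tfrac{d}{dt}\|A\omega\|_{L^2}^2+\nu\|\nabla_t A\omega\|_{L^2}^2+\|\sqrt{-\dot M M}\langle D\rangle^N\omega\|_{L^2}^2=\langle A\p_x\theta,A\omega\rangle-\langle A(u\cdot\nabla_t\omega),A\omega\rangle
\]
and the analogous identity for $\theta$ with $\eta$, the forcing $\alpha\langle A\p_x\Delta_t^{-1}\omega,A\theta\rangle$, and $-\langle A(u\cdot\nabla_t\theta),A\theta\rangle$. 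Integrating in time and using $M(0)=1$, so that $\|A\omega_0\|_{L^2}=\|\omega_0\|_{H^N}\le\epsilon_1$, reduces the proposition to estimating the four error integrals.

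The two Boussinesq coupling terms are handled directly via the enhanced-dissipation norm. In $\int_0^T\langle A\p_x\theta,A\omega\rangle$ only $k\neq 0$ contributes, so I would bound the Fourier symbol of $A\p_x\theta$ by that of $\nabla_t A\theta$ and invoke \eqref{eq:17} to get $\|A\omega_{\neq}\|_{L^2L^2}\lesssim\nu^{-1/6}\sqrt{E_\omega}$; Cauchy--Schwarz then gives a bound $\lesssim\nu^{-1/6}\eta^{-1/2}\epsilon_1\epsilon_2$, which is $\ll\epsilon_1^2$ by $\epsilon_2\ll\sqrt\eta\sqrt\nu\,\epsilon_1$. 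For $\alpha\int_0^T\langle A\p_x\Delta_t^{-1}\omega,A\theta\rangle$ I would use that the symbol of $\p_x\Delta_t^{-1}$ has modulus $\tfrac{|k|}{k^2+(\xi-kt)^2}$, which by the third property of $M$ is bounded by $-\dot M/M$; factoring this as $\sqrt{-\dot M/M}\cdot\sqrt{-\dot M/M}$ and applying Cauchy--Schwarz yields
\[
\alpha\int_0^T\langle A\p_x\Delta_t^{-1}\omega,A\theta\rangle\lesssim\alpha\,\|\sqrt{-\dot M M}\langle D\rangle^N\omega\|_{L^2L^2}\,\|\sqrt{-\dot M M}\langle D\rangle^N\theta\|_{L^2L^2}\lesssim\alpha\,\epsilon_1\epsilon_2,
\]
which is $\ll\epsilon_2^2$ precisely because $\alpha<\eta^{1/2}\nu^{1/3}\epsilon_2/\epsilon_1$. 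Thus the new coupling structure is comparatively soft and only dictates the quantitative constraints relating $\alpha,\epsilon_1,\epsilon_2$.

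The transport terms $\int_0^T\langle A(u\cdot\nabla_t\omega),A\omega\rangle$ and $\int_0^T\langle A(u\cdot\nabla_t\theta),A\theta\rangle$ are the heart of the matter, and here I would import the machinery of \cite{bedrossian2016sobolev} essentially verbatim. The key cancellation is that $u$ is divergence free with respect to $\nabla_t$, so the top-order piece $\langle u\cdot\nabla_t A\omega,A\omega\rangle$ vanishes by integration by parts and only the commutator $[A,u\cdot\nabla_t]\omega$ survives. I would split the velocity into its $x$-average $u^{=}$ (a shear) and remainder $u^{\neq}$, decompose each nonlinearity as $\mathcal T=\mathcal T^{=}+\mathcal T^{\neq}$ via paraproducts, and absorb the $\mathcal T^{\neq}$ pieces using the multiplier gain \eqref{eq:vbound}, \eqref{eq:17} and the last two properties of $M$, which convert the frequency shifts into factors controlled by the $\sqrt{-\dot M M}$ term; the $\mathcal T^{=}$ pieces use the shear structure. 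The resulting net bounds have the form $\tfrac{C\epsilon_1}{\sqrt\nu}E_\omega$ for the $\omega$-transport and $\tfrac{C\epsilon_1}{\sqrt\nu}E_\theta$ (plus cross terms $\sqrt{E_\omega E_\theta}$) for the $\theta$-transport, so that $\epsilon_1\ll\min(\nu,\eta)^{1/2}$ renders them a small fraction of $\epsilon_1^2$ and $\epsilon_2^2$.

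Collecting everything gives $E_\omega(T)\le\epsilon_1^2+(\text{small})\,\epsilon_1^2$ and $E_\theta(T)\le\epsilon_2^2+(\text{small})\,\epsilon_2^2$ on $(0,T)$, and choosing the implied constants in the smallness hypotheses sufficiently small yields the claimed improvements $E_\omega(T)\le 4\epsilon_1^2$ and $E_\theta(T)\le 4\epsilon_2^2$. I expect the transport estimates---in particular the paraproduct frequency splitting and the bookkeeping of the $\mathcal T^{=}$ shear interaction---to be the main obstacle, while the genuinely new coupling terms are straightforward once the enhanced-dissipation norm is in hand.
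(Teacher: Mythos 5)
Your proposal is correct and, at the structural level, follows the same route as the paper: a bootstrap closed by the weighted energy estimate with $A=M\langle D\rangle^N$ (using that $A$ commutes with the relevant Fourier multipliers and that $-\dot M M$ produces good terms), reduction to four error integrals, the Bedrossian--Vicol--Wang commutator machinery with the $v_{=}$/$v_{\neq}$ splitting for the two transport terms, and the smallness hypotheses on $\epsilon_1,\epsilon_2,\alpha$ to conclude. Where you genuinely deviate is in the buoyancy coupling term $\mathcal{T}_{\alpha}$: the paper estimates it by H\"older and Poincar\'e together with the enhanced-dissipation bound \eqref{eq:17}, obtaining $\alpha\,\eta^{-1/2}\nu^{-1/3}\epsilon_1\epsilon_2$, which is then $<\epsilon_2^2$ exactly because of the hypothesis $\alpha<\eta^{1/2}\nu^{1/3}\epsilon_2/\epsilon_1$; you instead absorb the symbol $|k|/(k^2+(\xi-kt)^2)$ of $\p_x\Delta_t^{-1}$ into $-\dot M/M$ via the third property of $M$ and apply Cauchy--Schwarz against the two $\sqrt{-\dot M M}$ good terms, obtaining $\lesssim\alpha\,\epsilon_1\epsilon_2$ with no inverse powers of the dissipation coefficients. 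That is a genuinely different, and in the small-dissipation regime sharper, argument: it would allow the hypothesis on $\alpha$ to be weakened to $\alpha\ll\epsilon_2/\epsilon_1$. The one caveat is that your final implication (``$\ll\epsilon_2^2$ precisely because $\alpha<\eta^{1/2}\nu^{1/3}\epsilon_2/\epsilon_1$'') additionally requires $\eta^{1/2}\nu^{1/3}\lesssim 1$, which Theorem \ref{thm:nonlinear} does not formally assume; in the (easy, large-dissipation) complementary regime one should revert to the paper's Poincar\'e-based bound. The same mild remark applies to your use of \eqref{eq:17} for $\mathcal{T}_{\omega\theta}$, where your bound $\nu^{-1/6}\eta^{-1/2}\epsilon_1\epsilon_2$ needs $\nu\lesssim 1$ to beat $\epsilon_1^2$, whereas the paper's $\nu^{-1/2}\eta^{-1/2}\epsilon_1\epsilon_2$ cancels exactly against $\epsilon_2\ll\sqrt{\nu\eta}\,\epsilon_1$ with no side condition.
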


Before proving Proposition \ref{prop:improve}, let us discuss how this allows us
to establish Theorem \ref{thm:nonlinear}.

\begin{proof}[Proof of Theorem \ref{thm:nonlinear}]
  By Proposition \ref{prop:initialbootstrap} there exists a positive time $T>0$ such
  that the estimates \eqref{eq:15} and \eqref{eq:16} hold.
  Since these are closed conditions, we may take $0<T^*\leq \infty$ to be the
  maximal time such that \eqref{eq:6} holds.
  If $T^*=\infty$ this implies the results of Theorem \ref{thm:nonlinear}. Thus,
  suppose for the sake of contradiction that $T^*$ is finite.
  Then by Proposition \ref{prop:improve}, on $(0,T^*)$ the improved estimates
  \eqref{eq:18} hold. By local well-posedness and continuity arguments as in the
  proof of Proposition \ref{prop:initialbootstrap} there then exists a time $T_2>T^*$
  (possibly only very slightly larger) such that the solutions exists at least until
  time $T_2$ and the energies satisfy $E_{\omega}(T_2)-E_{\omega}(T^*)< \epsilon_1^2$ and
  $E_{\theta}(T_2)-E_{\theta}(T^*)< \epsilon_2^2$.
  But by \eqref{eq:18} this implies that that also at the larger time $T_2$, the
  estimates \eqref{eq:15} and \eqref{eq:16} are satisfied and $T^*$ is therefore not maximal. This
  contradiction thus shows that $T^*=\infty$, which concludes the proof.
\end{proof}

It thus remains to prove Proposition \ref{prop:improve}.

\begin{proof}[Proof of Proposition \ref{prop:improve}]
Let $T>0$ be a given time such that
\begin{align*}
  E_{\omega}(T)&:= \|A\omega\|_{L^\infty L^2((0,T))}^2 + \nu \|\nabla_t A \omega\|_{L^2 L^2((0,T))} + \|\sqrt{-\dot M M} \langle D \rangle^N \omega\|_{L^2 L^2((0,T))}^2 \leq 8\epsilon_1^2, \\
 E_{\theta}(T)&:= \|A\theta\|_{L^\infty L^2((0,T))}^2 + \eta \|\nabla_t A \theta\|_{L^2 L^2((0,T))} + \|\sqrt{-\dot M M} \langle D \rangle^N \theta\|_{L^2((0,T))}^2 \leq 8\epsilon_2^2. 
\end{align*}

Then by testing the Boussinesq equation \eqref{eq:B} with $A\omega$ and
$A\theta$ we observe that 
  \begin{align*}
    \dt \|A\omega\|_{L^2}^2/2 + \nu \|\nabla_t A \omega\|_{L^2}^2 + \|\sqrt{-\dot M M} \langle D \rangle^N \omega\|_{L^2}^2 &= - \int A (u \cdot \nabla \omega)A \omega + \int A(\p_x\theta ) A \omega, \\
    \dt \|A \theta\|_{L^2}^2/2 + \eta \|\nabla_t A \theta\|_{L^2}^2 + \|\sqrt{-\dot M M} \langle D \rangle^N \theta \|_{L^2}^2 &= - \int A (u \cdot \nabla \theta)A \theta - \alpha \int A(\p_x \Delta_t^{-1} \omega) A \theta. 
  \end{align*}
  Here we used that $A$ is a Fourier multiplier and hence commutes with
  derivatives, which greatly simplifies calculations (for related problems for
  flows other than Couette see \cite{WZZkolmogorov,coti2019degenerate}).

  Integrating in time, it follows that
  \begin{align*}
    E_{\omega}(T) &\leq \|A\omega_0\|_{L^2}^2 - \iint A(u \cdot \nabla \omega) A \omega + \iint A(\p_x \theta) A \omega \\
    & =: \|A\omega_0\|_{L^2}^2 + \mathcal{T}_{\omega} + \mathcal{T}_{\omega \theta},\\
      E_\theta(T)& \leq \|A\theta_0\|_{L^2}^2 - \iint A(u\cdot \nabla \theta) A \theta - \alpha \iint  A(\p_x \Delta_t^{-1} \omega) A \theta\\
    &:= \|A\theta_0\|_{L^2}^2  + \mathcal{T}_\theta + \mathcal{T}_{\alpha}.
  \end{align*}
    % In the formulation of the equations we note that
    % \begin{align*}
    %   \nabla_t^\perp \Delta_t^{-1} \omega \cdot \nabla_t \theta = \nabla^\perp \Delta_t^{-1} \omega \cdot \nabla \theta, 
    % \end{align*}
    % since the contributions by $\pm t \p_x \Delta_t^{-1} \omega \p)x \theta$
    % cancel.
    Since the initial data by assumption satisfies
    \begin{align*}
      \frac{\|A\omega_0\|_{L^2}^2}{2}&< \epsilon_1^2, \\
      \frac{\|A\theta_0\|_{L^2}^2}{2}&< \epsilon_2^2,
    \end{align*}
    it remains to estimate $\mathcal{T}_{\omega}, \mathcal{T}_{\omega,\theta}, \mathcal{T}_\theta$ and $\mathcal{T}_{\alpha}$.

    We phrase these bounds as a proposition.
    \begin{prop}
      \label{prop:estimates}
      Let $T>0$ and suppose that \eqref{eq:15} and \eqref{eq:16} hold.
      Then the following estimates hold:
      \begin{align}
        \mathcal{T}_{\omega}&\leq \epsilon_1^{3} \nu^{-1/2} + \epsilon_1^3 \nu^{-1/3}, \\
        \mathcal{T}_{\theta} &\leq \epsilon_2^2 \epsilon_1 \eta^{-1/2} + \epsilon_2^2 \epsilon_1 \nu^{-1/3}, \\
        \mathcal{T}_{\theta, \omega} &\leq \frac{\epsilon_1 \epsilon_2}{\sqrt{\nu \eta}}, \\
        \mathcal{T}_{\alpha} & \leq  \alpha \eta^{-1/2} \epsilon_2 \nu^{-1/3} \epsilon_1.
      \end{align}
    \end{prop}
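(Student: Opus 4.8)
The plan is to treat the two transport terms $\mathcal{T}_\omega$ and $\mathcal{T}_\theta$ by the commutator/multiplier scheme of Bedrossian, Vicol and Wang \cite{bedrossian2016sobolev}, and the two coupling terms $\mathcal{T}_{\theta,\omega}$ and $\mathcal{T}_\alpha$ by direct Cauchy--Schwarz estimates built on the symbol inequality $|k|\le|(k,\xi-kt)|$. Throughout I will use the bootstrap bounds \eqref{eq:15}, \eqref{eq:16}, which give $\|A\omega\|_{L^\infty L^2}$ and $\|\sqrt{-\dot M M}\langle D\rangle^N\omega\|_{L^2L^2}$ bounded by $\epsilon_1$, together with $\|\nabla_t A\omega\|_{L^2L^2}\lesssim\nu^{-1/2}\epsilon_1$, and the analogous bounds for $\theta$ with $\epsilon_1,\nu$ replaced by $\epsilon_2,\eta$. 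I will also use that for $N\ge 5$ the space $H^N$ is an algebra, so that every product may be split by a high--low (paraproduct) decomposition.

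For $\mathcal{T}_\omega=-\iint A(u\cdot\nabla_t\omega)A\omega$ I would first use that $u=\nabla_t^\perp\Delta_t^{-1}\omega$ is divergence free, so that $\iint(u\cdot\nabla_t A\omega)A\omega=\tfrac12\iint u\cdot\nabla_t|A\omega|^2=0$ and only the commutator $-\iint([A,u\cdot\nabla_t]\omega)A\omega$ remains. I then split the velocity $u=u_=+u_{\neq}$. The zero mode $u_=$ is a shear $u_=^{(1)}(y)\p_x$; here $\p_x$ commutes with the Fourier multiplier $A$, and the remaining commutator is controlled by the multiplier properties $-\dot M/M\ge |k|/(k^2+|\xi-kt|^2)$ and $|\p_\xi M/M|\lesssim 1/|k|$, which let one absorb the loss into the Cauchy--Kovalevskaya (CK) term $\|\sqrt{-\dot M M}\langle D\rangle^N\omega\|_{L^2L^2}$; pairing the two $\omega$-factors against one CK term and one dissipation term $\|\nabla_t A\omega\|_{L^2L^2}\lesssim\nu^{-1/2}\epsilon_1$ yields the contribution $\epsilon_1^3\nu^{-1/2}$. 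For the complement $u_{\neq}$ I would apply the velocity bound \eqref{eq:17} to both the velocity $u_{\neq}$ and the transported factor $\omega_{\neq}$; each application produces a factor $\nu^{-1/6}\epsilon_1$, so that together with the remaining $L^\infty L^2$ factor this gives $\epsilon_1^3\nu^{-1/3}$. The estimate for $\mathcal{T}_\theta=-\iint A(u\cdot\nabla_t\theta)A\theta$ is identical in structure: incompressibility removes the leading term, the shear commutator is absorbed into the CK term for $\theta$ and paired with the $\theta$-dissipation $\|\nabla_t A\theta\|_{L^2L^2}\lesssim\eta^{-1/2}\epsilon_2$ (giving $\epsilon_2^2\epsilon_1\eta^{-1/2}$), while the $u_{\neq}$ part uses \eqref{eq:17} on $u_{\neq}$ and on $\theta_{\neq}$ to produce $\epsilon_2^2\epsilon_1\nu^{-1/3}$.

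The coupling terms are simpler because they are linear in each of $\omega$ and $\theta$. For $\mathcal{T}_{\theta,\omega}=\iint A\p_x\theta\,A\omega$, since $\p_x$ annihilates the $x$-average I may restrict to nonzero modes and use the pointwise symbol bound $|k|\le|(k,\xi-kt)|$ to replace $\p_x\theta$ by $\nabla_t\theta$; Cauchy--Schwarz in space and time then pairs $\|\nabla_t A\theta\|_{L^2L^2}\lesssim\eta^{-1/2}\epsilon_2$ against $\|A\omega_{\neq}\|_{L^2L^2}$, and the latter is controlled by \eqref{eq:17} by $\nu^{-1/6}\epsilon_1\le\nu^{-1/2}\epsilon_1$, giving $\epsilon_1\epsilon_2(\nu\eta)^{-1/2}$. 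For $\mathcal{T}_\alpha=-\alpha\iint A(\p_x\Delta_t^{-1}\omega)A\theta$ I observe that $\p_x\Delta_t^{-1}\omega$ is a velocity component (its symbol is $|k|/|\nabla_t|^2\le 1/|\nabla_t|$), so \eqref{eq:17} bounds it in $L^2H^N$ by $\nu^{-1/6}\epsilon_1$, while the $\eta$-analogue of \eqref{eq:17} for $\theta$ bounds $\|A\theta_{\neq}\|_{L^2L^2}$ by $\eta^{-1/6}\epsilon_2$; a Cauchy--Schwarz then yields $\alpha\,\nu^{-1/3}\eta^{-1/2}\epsilon_1\epsilon_2$ after using the crude $\nu^{-1/6}\le\nu^{-1/3}$ and $\eta^{-1/6}\le\eta^{-1/2}$.

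I expect the main obstacle to be the shear-transport commutator in the $\mathcal{T}^=$ parts of $\mathcal{T}_\omega$ and $\mathcal{T}_\theta$: this is exactly the delicate step of the Bedrossian--Vicol--Wang scheme, where the derivative/mixing loss created by transport along the Couette shear must be compensated by the precise design of $M$ (through $-\dot M/M$ and $\p_\xi M/M$) so that it is swallowed by the CK term rather than by a power of $\nu$. By contrast, the coupling terms $\mathcal{T}_{\theta,\omega}$ and $\mathcal{T}_\alpha$ and the nonzero-mode pieces are routine once the velocity bound \eqref{eq:17} is in hand; the only genuinely new bookkeeping relative to \cite{bedrossian2016sobolev} is carrying the temperature equation and its two diffusion-dependent factors $\eta^{-1/2}$ alongside the vorticity estimates.
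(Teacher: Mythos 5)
Your proposal follows essentially the same route as the paper: the two transport terms are handled by the Bedrossian--Vicol--Wang commutator/multiplier scheme (cancellation of the leading term, commutator gain of one derivative against $v_= = \p_y^{-1}\omega_=$, absorption into CK and dissipation terms), and the two coupling terms by Cauchy--Schwarz together with the observation that $\p_x$ restricts to nonzero modes, which the paper phrases as Poincar\'e's inequality. Two bookkeeping remarks. First, your assignment of the two powers of $\nu$ in $\mathcal{T}_\omega$ is swapped relative to the paper: the paper obtains $\epsilon_1^3\nu^{-1/2}$ from the $v_{\neq}$ part by a direct H\"older estimate (no cancellation at all there), and $\epsilon_1^3\nu^{-1/3}$ from the shear commutator by applying \eqref{eq:17} to \emph{both} $\omega_{\neq}$ factors. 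Second, your claim that pairing ``one CK term and one dissipation term'' yields $\epsilon_1^3\nu^{-1/2}$ for the shear part is not what that pairing actually produces: since $\sqrt{-\dot M M}$ has no lower bound, converting plain $H^N$ norms into CK/dissipation norms always costs $\nu^{-1/6}$ per factor through \eqref{eq:vbound}, and the CK--dissipation cross term then gives $\nu^{-1/3}\epsilon_1^2$, not $\nu^{-1/2}\epsilon_1^2$. Both slips are harmless because the achievable bounds sit below the ones you claim, and the stated proposition allows the sum of both exponents.

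The one step that would fail as written is the ``$\eta$-analogue of \eqref{eq:17}'' invoked for $\mathcal{T}_\alpha$. The multiplier $M$ is constructed relative to $\nu$: property \eqref{eq:vbound} reads $1\lesssim\nu^{-1/6}(\sqrt{-\dot M M}+\nu^{1/2}|k,\xi-kt|)$, and near the resonant times $t\approx\xi/k$ the CK symbol only supplies smallness of size $\nu^{1/3}$, not $\eta^{1/3}$. Since the hypotheses of Theorem \ref{thm:nonlinear} allow $\eta\gg\nu$, the inequality $1\lesssim\eta^{-1/6}(\sqrt{-\dot M M}+\eta^{1/2}|k,\xi-kt|)$ is false for this $M$, and hence there is no bound $\|A\theta_{\neq}\|_{L^2L^2}\lesssim\eta^{-1/6}\epsilon_2$. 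Fortunately you only use this after relaxing $\eta^{-1/6}$ to $\eta^{-1/2}$, and the weaker bound follows rigorously without any multiplier input: on nonzero modes $\|A\theta_{\neq}\|_{L^2L^2}\le\|\p_x A\theta\|_{L^2L^2}\le\|\nabla_t A\theta\|_{L^2L^2}\le\eta^{-1/2}\epsilon_2$, using the $\eta$-dissipation term in $E_\theta$. This is exactly the paper's argument for $\mathcal{T}_\alpha$, so the repair is immediate and all four stated estimates stand.
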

These estimates allow us to conclude the proof of Proposition
\ref{prop:improve}:
Since $\epsilon_1\leq \min(\nu, \eta)^{1/2}$, $\epsilon_2\leq \sqrt{\nu \eta}
\epsilon_1$ and $\alpha< \eta^{1/2}\nu^{1/3}\frac{\epsilon_2}{\epsilon_1}$ it follows that
\begin{align*}
  \mathcal{T}_{\omega} \leq \epsilon_1^2,  \mathcal{T}_{\omega, \theta} \leq \epsilon_1^2,
  \mathcal{T}_{\theta}\leq \epsilon_2^2, \mathcal{T}_{\alpha}\leq \epsilon_2^2.
\end{align*}
This in turn implies that
\begin{align*}
  E_\omega(T)&\leq \epsilon_1^2 + \epsilon_1^2 + \epsilon_1^2 = 3 \epsilon_1^2 < 8 \epsilon_1^2, \\
  E_{\theta}(T)&\leq \epsilon_2^2 + \epsilon_2^2 +\epsilon_2^2< 8 \epsilon_2^2.
\end{align*}
Thus, we observe an improvement over the bounds \eqref{eq:15} and \eqref{eq:16},
which concludes the proof of this proposition and hence allows us to close the
bootstrap argument for Theorem \ref{thm:nonlinear}.
\end{proof}

It remains to prove Proposition \ref{prop:estimates}.
\begin{proof}[Proof of Proposition \ref{prop:estimates}]
  We remark that $\mathcal{T}_{\theta,\omega}$ and $\mathcal{T}_\alpha$ have a quadratic structure as
  opposed to the cubic structure of $\mathcal{T}_{\theta}$ and
  $\mathcal{T}_{\omega}$.
  Hence, the additional smallness compared to $8\epsilon_1^2$ or $8
  \epsilon_2^2$ in these two cases is achieved
  by requiring that $\epsilon_2$ is much smaller than $\epsilon_1$ and that
  $\alpha$ is small compared to the quotient $\frac{\epsilon_2}{\epsilon_1}$.

  \underline{Estimating $\mathcal{T}_{\omega,\theta}$:}
  Since $\p_x \theta$ possesses a vanishing $x$-average, we may use Hölder's
  inequality and Poincar\'e's inequality to estimate
  \begin{align*}
    \mathcal{T}_{\theta,\omega} = \int_0^T \langle A\omega_{\neq}, A \p_x \theta \rangle
    \leq \|A\omega_{\neq}\|_{L^2 L^2} \|\p_x A \theta\|_{L^2 L^2}\\
    \leq \|\nabla_t A\omega_{\neq}\|_{L^2 L^2} \|\nabla_t A \theta\|_{L^2 L^2} \\
    \leq \frac{\epsilon_1}{\sqrt{\nu}} \frac{\epsilon_2}{\sqrt{\eta}}.
  \end{align*}

\underline{Estimating $\mathcal{T}_\alpha$:}
We recall that 
  \begin{align*}
    \mathcal{T}_{\alpha}= \alpha \int_0^T \langle A  \theta, A \p_x \Delta_{t}^{-1} \omega \rangle dt.
  \end{align*}
  Using Hölder's and Poincar\'e's inequality we control this by
  \begin{align*}
    & \quad \alpha \| \nabla A \theta\|_{L^2 L^2} \|\p_x \Delta_t^{-1} \omega\|_{L^2 H^N} \\
    &\leq \alpha \eta^{-1/2} \epsilon_2 \nu^{-1/3} \epsilon_1.
  \end{align*}
  We remark that here is where we use that $\alpha$ is ``small''. An alternative
  approach for $\alpha$ ``large'' is discussed in Section \ref{sec:largealpha}.

\underline{Estimating $\mathcal{T}_{\omega}$ and $\mathcal{T}_{\theta}$:}
  The estimate for $\mathcal{T}_{\omega}$ has been established in
  \cite{bedrossian2016sobolev}. Its proof further extends to the case of
  $\mathcal{T}_{\theta}$ with minor modifications.
  In the interest of readability we include it below.
  
  We recall that
  \begin{align*}
    \mathcal{T}_\omega= - \iint_0^T A(v \cdot \nabla \omega) A \omega
  \end{align*}
  Since the shear flow component of the velocity field, that is the $x$-average $(\nabla_t^\perp
  \Delta_t^{-1}\omega)_{=}= \p_y^{-1} \omega_{=}=:v_=$, decays slower, we split
  $\mathcal{T}_{\omega}$ into a contribution involving the shear and a
  contribution involving its $L^2$-orthogonal complement:
  \begin{align*}
    \mathcal{T}_{\omega}= \int_0^T \langle A\omega, A(v_{=} \p_x \omega) \rangle
    + \int_0^T \langle A\omega, A(v_{\neq} \cdot \nabla \omega) \rangle
    = \mathcal{T}_{\omega}^{=} + \mathcal{T}_{\omega}^{\neq}
  \end{align*}
  For $\mathcal{T}_{\omega}^{\neq}$ we easily estimate by
  \begin{align}
    \label{eq:19}
    \begin{split}
    \|A \omega\|_{L^\infty L^2}\|v_{\neq} \|_{L^2 H^N} \|\nabla \omega\|_{L^2 H^N}\\
    \leq \epsilon_1 \ \epsilon_1 \ \frac{\epsilon_1}{\sqrt{\nu}} = \epsilon_1^3 \nu^{-1/2}.
    \end{split}
  \end{align}
  In order to estimate $\mathcal{T}_{\omega}^=$ we make use of some
  cancellations. We note that $\p_x v_==0$ and hence
  \begin{align*}
    \langle A\omega, v_= \p_x A \omega \rangle=0.
  \end{align*}
  We therefore obtain a commutator
  \begin{align*}
    \mathcal{T}_{\omega}^{=} =\int_0^T \langle A \omega, (A(v_{=} \p_x \omega_{\neq})- u_0 \p_x A \omega_{\neq}) \rangle dt.
  \end{align*}
  By Parseval's theorem we express the inner $L^2$ integral as
  \begin{align*}
    \sum_k \iint a(t,k,\xi) \tilde{\omega}(k,\xi) (a(k,\xi)-a(k,\xi-\zeta)) \tilde{u}_=(\zeta) \tilde{\omega}(k,\xi-\zeta) d\zeta d\xi
  \end{align*}
  By the properties of $A$ (and $M$) Bedrossian, Vicol and Wang deduce (see $(2.17)$ and $(2.18)$ in
  \cite{bedrossian2016sobolev}) that
  \begin{align*}
     |A(k,\xi)-A(k,\xi-\zeta)| \leq ((1+k^2+ (\xi-\zeta)^2)^{N/2} + (1+k^2+\xi^2)^{N/2})|zeta|.
  \end{align*}
  We note that the factor $|\zeta|$ cancels with $\tilde{v}_=(\zeta)= -i
  \zeta^{-1} \tilde{\omega}(0,\zeta)$ and hence obtain that 
  \begin{align*}
    |\mathcal{T}_{\omega}^=|\leq C \sum_{k\neq 0} \iint ((1+k^2+(\xi-\zeta)^2)^{N/2} + (1+k^2+\zeta^2)^{N/2})
    |\tilde{\omega}(0,\zeta) \tilde{\omega}(k,\xi-\zeta)| |A(k,\xi)\tilde{\omega}(k,\xi)| d\xi d\zeta.
  \end{align*}
  It thus follows that
  \begin{align*}
  |\mathcal{T}_\omega^=|\leq C  \|\omega_{=}\|_{L^\infty H^N} \|\omega_{\neq}\|_{L^2H^N}^2.
  \end{align*}
  As noted in \eqref{eq:17} following the introduction of the multiplier $M$, the
  last term can be estimate in terms of $\nu^{-1/3} E_{\omega}$ and therefore
  \begin{align}
    \label{eq:20}
   \mathcal{T}_{\omega}^{=} \leq  \epsilon_1 \nu^{-1/3} \epsilon_1^2.
  \end{align}
  Combining the estimate \eqref{eq:19} for $\mathcal{T}_\omega^{\neq}$ and
  \eqref{eq:20} for $\mathcal{T}_\omega^=$ then concludes the proof for $\mathcal{T}_\omega$.

  We next consider $\mathcal{T}_{\theta}$ and analogously split into a
  contribution involving the shear and one involving its complement:
  \begin{align*}
    \mathcal{T}_{\theta}&= \int_0^T \langle A \theta_{\neq} , (A (u_= \p_x \theta)- u_= \p_x A \theta_\neq) \rangle + \int_0^T \langle A \theta, A(u_\neq \cdot \nabla \theta) \rangle \\
    &=: \mathcal{T}_{\theta}^{\neq} + \mathcal{T}_{\theta}^=.
    % \leq C \|\omega_=\|_{L^\infty H^N} \|\theta_\neq \|_{L^2 H^N}^2
    % + \|\nabla_t \Delta_t^{-1} \omega_\neq \|_{L^2 H^N} \|\nabla_t \theta\|_{L^2 H^N} \|A \theta\|_{L^\infty H^N} \\
    % \leq \epsilon_1 \eta^{-1/2} \nu^{-1/3}\epsilon_2^2 + \epsilon_2^2 \epsilon_1 \nu^{-1/3}
  \end{align*}
  By the same argument as for $\mathcal{T}_{\omega}$ we may estimate
  \begin{align*}
    |\mathcal{T}_\theta^{\neq}| \leq C \|\omega_=\|_{L^\infty H^N} \|\theta_{\neq} \|_{L^2 H^N}^2 \leq \epsilon_1 \eta^{-1/2} \nu^{-1/3}\epsilon_2^2
  \end{align*}
  and
  \begin{align*}
    |\mathcal{T}_{\theta}^=| \leq \|\nabla_t \Delta_t^{-1} \omega_{\neq} \|_{L^2 H^N} \|\nabla_t \theta\|_{L^2 H^N} \|A  \theta\|_{L^\infty H^N} \leq \epsilon_2^2 \epsilon_1 \nu^{-1/3}.
  \end{align*}
  This concludes the proof.
\end{proof}
\subsection{Large Hydrostatic Balance and Shear}
\label{sec:largealpha}
In Section \ref{sec:fulldissipation} we considered the nonlinear problem with
$\alpha>0$ ``small'' as a perturbation of the Navier-Stokes problem.
In contrast in Section \ref{sec:alpha} for the linearized problem we exploited
$\alpha$ to make use of classical energy methods used for the hydrostatic
balance case (without shear) and treated the shear $\beta y$ as a correction.
Our aim in the following is to combine both methods to establish (asymptotic)
stability also for large $\alpha$ and $\beta=1$ (after rescaling).

Here, we further adapt the previous bootstrap approach to consider an energy of the
form 
\begin{align}
  \label{eq:21}
  \alpha \|A \omega \|_{H^{N}}^2 + \langle A \theta, (-\p_x^2-(\p_y-t\p_x)^2) A \theta\rangle_{L^2} .
\end{align}

\begin{thm}
  \label{thm:largealpha}
  Let $\alpha\geq 1$ and $\beta=1$ and $\nu>0$ and suppose that $\eta>2$.
  Let further $(\omega_0, \theta_0)\in H^N \times H^{N+1}$ be given initial data
  such that
  \begin{align*}
    \alpha \|\omega_0\|_{H^N}^2 + \|\nabla \theta_0\|_{H^N}^2 \ll \epsilon^2.
  \end{align*}
  Then for all times $T>0$ it holds that
  \begin{align}
    \begin{split}
    \text{ess-sup}_{0\leq t \leq T} (\alpha \|A \omega(t) \|_{L^2}^2 +  \langle A \theta, (-\p_x^2-(\p_y-t\p_x)^2) A \theta\rangle_{L^2} )\\
    + \nu \int_0^T \alpha \| \nabla_t A \omega(t) \|_{H^{N}}^2 dt \\
    \label{eq:third} + \eta\int_0^T \langle A \theta, (-\p_x^2-(\p_y-t\p_x)^2)^2 A \theta\rangle_{L^2} dt \leq \epsilon^2.
    \end{split}
  \end{align}
\end{thm}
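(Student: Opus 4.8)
The plan is to run a bootstrap argument in the spirit of the proof of Theorem \ref{thm:nonlinear}, but based on the $\alpha$-weighted and $\nabla_t$-weighted energy suggested by \eqref{eq:21} and by the linear analysis of Propositions \ref{prop:woshear} and \ref{prop:wshear}. Keeping $A=M\langle D\rangle^N$ as before, I would work with the bootstrap functional
\[
E(T) := \esssup_{0\le t\le T}\Big(\alpha\|A\omega\|_{L^2}^2 + \langle A\theta, (-\Delta_t)A\theta\rangle\Big) + \nu\alpha\|\nabla_t A\omega\|_{L^2_tL^2}^2 + \eta\|(-\Delta_t)A\theta\|_{L^2_tL^2}^2 + (\text{multiplier terms}),
\]
where the multiplier gains $\alpha\|\sqrt{-\dot M M}\langle D\rangle^N\omega\|_{L^2_tL^2}^2$ and $\|\sqrt{-\dot M M}\langle D\rangle^N\nabla_t\theta\|_{L^2_tL^2}^2$ are retained (even though only the pieces in \eqref{eq:third} are stated) in order to close the nonlinear estimates through \eqref{eq:17}. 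Starting from the initial smallness $\alpha\|\omega_0\|_{H^N}^2 + \|\nabla\theta_0\|_{H^N}^2\ll\epsilon^2$ and the local well-posedness input of Proposition \ref{prop:initialbootstrap}, it suffices to show that the hypothesis $E(T)\le\epsilon^2$ self-improves to $E(T)\le\tfrac12\epsilon^2$, after which the continuity and maximality argument of Proposition \ref{prop:improve} closes the bootstrap and yields \eqref{eq:third}.

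I would then differentiate $\mathcal{E}(t):=\alpha\|A\omega\|_{L^2}^2 + \langle A\theta, (-\Delta_t)A\theta\rangle$ by testing \eqref{eq:B} with $\alpha A^2\omega$ and $(-\Delta_t)A^2\theta$. This produces five groups of terms. The dissipative contributions $-2\nu\alpha\|\nabla_t A\omega\|^2$ and $-2\eta\|(-\Delta_t)A\theta\|^2$ together with the multiplier gains $-2\alpha\|\sqrt{-\dot M M}\langle D\rangle^N\omega\|^2$ and $-2\|\sqrt{-\dot M M}\langle D\rangle^N\nabla_t\theta\|^2$ are favorable and feed the left-hand side of \eqref{eq:third}. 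The crucial structural point is the linear coupling between $\p_x\theta$ and $\alpha\p_x\Delta_t^{-1}\omega$: exactly as in Proposition \ref{prop:woshear}, after weighting $\omega$ by $\sqrt{\alpha}$ and $\theta$ by $\sqrt{-\Delta_t}$ the two off-diagonal symbols become the common purely imaginary factor $ik\sqrt{\alpha}/\sqrt{k^2+(\xi-kt)^2}$, so they are antisymmetric and cancel in the energy identity. Since $A$ and $-\Delta_t$ are Fourier multipliers, they commute with this coupling and the cancellation survives verbatim at the weighted $H^N$ level.

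The time dependence of the weight $-\Delta_t$ produces the additional term $\langle A\theta, [\dt(-\Delta_t)]A\theta\rangle$, whose symbol satisfies
\[
\dt\big(k^2+(\xi-kt)^2\big) = -2k(\xi-kt), \qquad |2k(\xi-kt)| \le k^2+(\xi-kt)^2 .
\]
For $k=0$ this vanishes, while for $k\neq 0$ one has $k^2+(\xi-kt)^2\ge 1$, so the correction is bounded pointwise by $(-\Delta_t)\le(-\Delta_t)^2$ and is therefore absorbed by the thermal dissipation $2\eta\,(-\Delta_t)^2$; this is precisely where the hypothesis $\eta>2$ enters, as it leaves at least half of the $\theta$-dissipation intact. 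The remaining nonlinear transport terms are treated by adapting Proposition \ref{prop:estimates}: I split $v=v_{=}+v_{\neq}$ into its shear part and complement, estimate the $\neq$ part by Hölder together with \eqref{eq:17}, and extract the commutator structure for the shear part as in \cite{bedrossian2016sobolev}. Here the weight $\alpha\ge 1$ is exploited through $\|A\omega\|_{L^2}\lesssim\epsilon/\sqrt{\alpha}$, which makes the velocity $v=\nabla_t^\perp\Delta_t^{-1}\omega$ correspondingly small and renders the $\omega$-transport term cubically small after accounting for the $\alpha$-prefactor.

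The main obstacle I anticipate is the temperature transport term at the elevated derivative order forced by the $(-\Delta_t)$-weight, namely $-2\,\text{Re}\iint A(v\cdot\nabla_t\theta)\,(-\Delta_t)A\theta$, which effectively transports $\nabla_t\theta$ rather than $\theta$. The key is that $v$ is divergence free with respect to $\nabla_t$, so the top-order piece $\langle(-\Delta_t)A\theta,\,v\cdot\nabla_t A\theta\rangle$ is antisymmetric and vanishes, leaving only commutators of $A(-\Delta_t)^{1/2}$ with $v\cdot\nabla_t$; these I would control by the commutator bound for $A$ from \cite{bedrossian2016sobolev}, by the smallness of $v$ coming from $\alpha\ge 1$, and by the enhanced-dissipation norm $\|\sqrt{-\dot M M}\langle D\rangle^N\nabla_t\theta\|_{L^2_tL^2}$ via \eqref{eq:vbound}. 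Once every error term is shown to be bounded by a small multiple of $\epsilon^2$ (taking $\epsilon$ small relative to $\nu$ and $\eta$), integrating in time and adding the initial-data bound improves $E(T)$ below $\tfrac12\epsilon^2$, closing the bootstrap.
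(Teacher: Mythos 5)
Your proposal follows essentially the same route as the paper's proof: the same weighted energy \eqref{eq:21} with $A=M\langle D\rangle^N$, the same antisymmetric cancellation of the buoyancy/coupling terms after weighting $\omega$ by $\sqrt{\alpha}$ and $\theta$ by $\sqrt{-\Delta_t}$, the same absorption of the weight's time derivative (symbol bounded by $k^2+(\xi-kt)^2\le (k^2+(\xi-kt)^2)^2$ for $k\neq 0$) into the thermal dissipation using $\eta>2$, and the same bootstrap with the shear/non-shear splitting and commutator estimates of \cite{bedrossian2016sobolev} for the transport terms. The only cosmetic difference is in the temperature transport term, where you invoke divergence-freeness plus commutators of $(-\Delta_t)^{1/2}$, while the paper integrates $-\Delta_t$ by parts and applies the product rule --- equivalent manipulations leading to the same bounds.
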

We remark that lower bound on $\eta$ is very restrictive, but allows use to treat
the time-dependence of $(\p_y-t \p_x)^2$ perturbatively.
In the general case $\beta \in \R$ this restriction would read $\eta \gg \beta$
and thus requires that thermal dissipation dominates the shear. 

\begin{proof}[Proof of Theorem \ref{thm:largealpha}]
  Similarly to the proof of Theorem \ref{thm:nonlinear} we begin by considering
  the time-derivative of equation \eqref{eq:21}.
  We compute
  \begin{align*}
    &\quad  \frac{d}{dt} \alpha \|A \omega \|_{H^{N}}^2+ \|\sqrt{-\dot A A} \omega\|_{L^2} + \nu \|\nabla_t A \omega\|_{L^2} \\
    &= \alpha \langle A \omega, A (v \cdot \nabla \omega) \rangle \\
    & \quad + \alpha \langle A \omega, A \p_x \theta \rangle,
  \end{align*}
  and
  \begin{align*}
    & \quad \frac{d}{dt} \langle A \theta, (-\p_x^2-(\p_y-t\p_x)^2) A \theta\rangle_{L^2} + \|\sqrt{-\p_x^2-(\p_y-t\p_x)^2}\sqrt{-\dot A A} \theta\|_{L^2}^2 \\
    & \quad + \eta \|\nabla_t \sqrt{-\p_x^2-(\p_y-t\p_x)^2} A \theta\|_{L^2}^2 \\
    &=  \alpha \langle A \theta ,(-\p_x^2-(\p_y-t\p_x)^2) A \p_x (-\p_x^2-(\p_y-t\p_x)^2)^{-1} \omega \rangle \\
    & \quad + \langle A \theta, (-\p_x^2- (\p_y-t\p_x)^2) A (v \cdot \nabla_t \theta) \rangle \\
    & \quad + \langle A \theta, -2\p_x (\p_y-t\p_x) A \theta \rangle.
  \end{align*}
  Since $A$ is a Fourier multiplier and hence commutes with $(-\p_x^2-(\p_y-t\p_x)^2)$, we observe that the contributions
  \begin{align*}
    \alpha \langle A \omega, A \p_x \theta \rangle
  \end{align*}
  and
  \begin{align*}
    \alpha \langle A \theta ,(-\p_x^2-(\p_y-t\p_x)^2) A \p_x (-\p_x^2-(\p_y-t\p_x)^2)^{-1} \omega \rangle
  \end{align*}
  cancel out.

  Integrating from $0$ to $T$ as in the proof of Theorem \ref{thm:nonlinear}, in
  our bootstrap approach we thus have to control three contributions:
  \begin{align}
   \mathcal{T}_{\omega}:= \int_0^T \alpha \langle A \omega, A (v \cdot \nabla \omega) \rangle, \\
    \mathcal{T}_{\theta}:= \int_0^T \langle A \theta, (-\p_x^2- (\p_y-t\p_x)^2) A (v \cdot \nabla_t \theta) \rangle,
  \end{align}
  and
  \begin{align*}
    \mathcal{T}_{A}:=\int_0^T \langle A \theta, -2\p_x (\p_y-t\p_x) A \theta \rangle.
  \end{align*}
  The first contribution $\mathcal{T}_{\omega}$ can be controlled in exactly the same way as in the
  proof of Proposition \ref{prop:estimates}:
  \begin{align*}
    \mathcal{T}_\omega \leq \alpha \|A \omega\|_{L^\infty L^2} \|u_{\neq}\|_{L^2H^N} \|\nabla \omega\|_{L^2 H^N}
    + C \alpha \|\omega_{=}\|_{L^\infty H^N} \|\omega_{\neq}\|_{L^2 H^N}^2 \\
    \leq  \epsilon^3 (\frac{\nu^{-1/2}}{\sqrt{\alpha}} + \frac{\nu^{-1/3}}{\sqrt{\alpha}}).
  \end{align*}
  The contribution $\mathcal{T}_A$ can be absorbed into 
  \begin{align*}
     \eta \|\nabla_t \sqrt{-\p_x^2-(\p_y-t\p_x)^2} A \theta\|_{L^2 L^2}^2 
  \end{align*}
  by using that $\eta\geq 2$.

  Finally, for the contribution $\mathcal{T}_{\theta}$ we follow the same
  strategy of proof as in Proposition \ref{prop:estimates}.
  We again split $\mathcal{T}_{\theta}$ into contributions due to $v_{=}$ and $v_{\neq}$.
  For 
  \begin{align*}
    \mathcal{T}_{\theta}^{\neq} = \int_0^T \int_0^T \langle A \theta, (-\p_x^2- (\p_y-t\p_x)^2) A (v_{\neq} \cdot \nabla_t \theta) \rangle
  \end{align*}
  we may estimate by
  \begin{align*}
    \|\sqrt{-\p_x^2- (\p_y-t\p_x)^2}A \theta\|_{L^\infty L^2} \|v_{\neq}\|_{L^2 H^N} \|\nabla_t\sqrt{-\p_x^2- (\p_y-t\p_x)^2} \theta\|_{L^2 H^N} \\
    + \|\sqrt{-\p_x^2- (\p_y-t\p_x)^2}A \theta\|_{L^\infty L^2} \|\omega_{\neq}\|_{L^2 H^N} \|\sqrt{-\p_x^2- (\p_y-t\p_x)^2} \theta\|_{L^2 H^N} \\
    \leq \epsilon \epsilon \nu^{-1/3} \epsilon \eta^{-1/2} + \epsilon \epsilon \nu^{-1/2} \epsilon \eta^{-1/2}.
  \end{align*}
  Compared to the setting of Theorem \ref{thm:nonlinear} we thus lose more
  powers of $\nu$ and $\eta$.
  
  For the last contribution
  \begin{align*}
    \mathcal{T}_{\theta}^{=} = \int_0^T \int_0^T \langle A \theta_{\neq}, (-\p_x^2- (\p_y-t\p_x)^2) A (v_{=}\p_x \theta_{\neq}) \rangle,
  \end{align*}
  we again use Parseval's theorem to obtain a cancellation for the contributions
  by $\theta_{=}$.
  Next, we integrate $-\p_x^2- (\p_y-t\p_x)^2$ by parts once and use the product
  rule to split
  \begin{align*}
    \p_x A (v_{=}\p_x \theta_{\neq}) &= A ((\p_x v_{\neq})\p_x \theta) + A (v_{\neq} \p_x \p_x \theta),\\
    (\p_y-t\p_x) A (v_{=}\p_x \theta_{\neq}) &= A (((\p_y-t\p_x) v_{\neq})\p_x \theta) + A (v_{\neq} \p_x (\p_y-t\p_x) \theta).
  \end{align*}
  For the first terms we bound by
  \begin{align*}
    \|\sqrt{-\p_x^2- (\p_y-t\p_x)^2} A \theta \|_{L^2 L^2} \|\omega\|_{L^\infty H^N} \|\nabla_t \theta \|_{L^2 H^N} \\
    \leq \epsilon \frac{\epsilon}{\sqrt{\alpha}} \epsilon.
  \end{align*}
  For the second terms we argue exactly as in the proof of Proposition
  \ref{prop:estimates} with $\p_x \theta$ or $(\p_y-t\p_y) \theta$ in place of
  $\theta$, which yields a bound by
  \begin{align*}
    \|\omega_{=}\|_{L^\infty H^N} \|\sqrt{\p_x^2+(\p_y-t\p_x)^2}\theta_{\neq}\|_{L^2 H^N}^2\\
    \leq \epsilon^3 \nu^{-1/2} \nu^{-1/3}
  \end{align*}
\end{proof}

\section{From Bounds to Decay}
\label{sec:bounds}

In Theorem \ref{thm:nonlinear} in Section \ref{sec:fulldissipation} we have
shown that the nonlinear Boussinesq equations satisfy energy estimates of the form 
\begin{align}
  \label{eq:22}
  \omega, \theta \in L^\infty_t H^N, \\
  \label{eq:23}
  \nabla_t \omega, \nabla_t \theta \in L^2_t H^N.
\end{align}
Hence, we know that the solutions stay bounded and their gradients are
integrable in time. However, integrability does not by itself imply any decay
(consider for example a series of thinner and thinner step functions) and even if one
additionally requires uniform continuity it only implies convergence to zero but
yields no rate.

In the following we make use of additional bounds on the semigroup associated with the
linearized operator to deduce decay estimates.
\begin{prop}
  \label{prop:decayrates}
  Let $N, \alpha, \epsilon_1, \epsilon_2$ be as in Theorem \ref{thm:nonlinear}.
  Additionally suppose you know the following two estimates:
\begin{itemize}
\item The evolution semigroup $S(\cdot , \cdot)$ of the linearized problem satisfies
  \begin{align*}
    \|S(t,\tau)\|_{H^N \times H^{N+1} \rightarrow H^N \times H^{N+1}} \leq C \exp(-C \gamma (t-\tau))
  \end{align*}
  for any $t\geq \tau \geq 0$ and some $\gamma>0$. (This is established in
  Section \ref{sec:couette}).
\item Due to (enhanced) dissipation $\omega \in L^2_tH^{N+1}$ and we have the
  following estimate:
  \begin{align*}
    \|\omega\|_{L^2 H^{N+1}} \ll \frac{1}{\alpha}
  \end{align*}
  (This follows by Theorem \ref{thm:nonlinear})
\end{itemize}
  Then the nonlinear Boussinesq equations further satisfy
  \begin{align*}
    \|\omega(t)\|_{H^N} + \|\theta(t)\|_{H^N} \leq 2 C \exp(-C\gamma t/2) (\|\omega_0\|_{H^N} + \|\theta_0\|_{H^N})
  \end{align*}
  for all $t>0$.
  In particular, we may choose $\gamma= \min(\nu, \eta)^{1/3}$ and thus observe
  dissipation on a time scale faster than heat flow, that is \emph{enhanced dissipation}.
\end{prop}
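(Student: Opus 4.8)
The plan is to run a weighted bootstrap on the Duhamel (variation of constants) representation of the nonlinear flow, using the exponential decay of the linear propagator $S$ to carry the decay and the $L^2_t$ integrability of $\omega$ to absorb the quadratic nonlinearity. Writing $U=(\omega,\theta)$ and recalling that the linear couplings $\p_x\theta$ and $\alpha\p_x\Delta_t^{-1}\omega$ in \eqref{eq:B} are part of the linearized operator generating $S$, the remaining nonlinearity is purely the transport term $\mathcal{N}(U)=-(v\cdot\nabla_t\omega,\ v\cdot\nabla_t\theta)$ with $v=\nabla_t^\perp\Delta_t^{-1}\omega$. Duhamel's formula then reads
\begin{align*}
  U(t)=S(t,0)U_0+\int_0^t S(t,s)\,\mathcal{N}(U(s))\,ds,
\end{align*}
and applying the first hypothesis gives
\begin{align*}
  \|U(t)\|_{H^N\times H^{N+1}}\leq C e^{-C\gamma t}\|U_0\|_{H^N\times H^{N+1}}+\int_0^t C e^{-C\gamma(t-s)}\|\mathcal{N}(U(s))\|_{H^N\times H^{N+1}}\,ds.
\end{align*}

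The key nonlinear estimate is that the quadratic term loses exactly one derivative, which can always be arranged to fall on the factor $\omega$. Since $N\geq 5$ the space $H^N$ is an algebra, and the Biot--Savart law $v=\nabla_t^\perp\Delta_t^{-1}\omega$ gains a derivative; distributing derivatives by the product rule so that in every term the extra derivative lands on $v$ or on $\nabla_t\omega$ (rather than on $\theta$, which would otherwise require control of $\theta$ in $H^{N+2}$), one obtains a bound of the schematic form
\begin{align*}
  \|\mathcal{N}(U(s))\|_{H^N\times H^{N+1}}\lesssim \|U(s)\|_{H^N\times H^{N+1}}\,\|\omega(s)\|_{H^{N+1}}.
\end{align*}
The decisive point is that the second factor is exactly the quantity controlled in $L^2_t$ by the second hypothesis.

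To close the argument I would introduce the weighted quantity $Y(T):=\sup_{0\leq s\leq T} e^{C\gamma s/2}\|U(s)\|_{H^N\times H^{N+1}}$. Inserting the nonlinear bound and using $\|U(s)\|_{H^N\times H^{N+1}}\leq e^{-C\gamma s/2}Y(T)$ for $s\leq T$, the exponents combine as $-C\gamma(t-s)+\tfrac{C\gamma}{2}t-\tfrac{C\gamma}{2}s=-\tfrac{C\gamma}{2}(t-s)$, so that
\begin{align*}
  e^{C\gamma t/2}\|U(t)\|_{H^N\times H^{N+1}}\leq C\|U_0\|_{H^N\times H^{N+1}}+C\,Y(T)\int_0^t e^{-\frac{C\gamma}{2}(t-s)}\|\omega(s)\|_{H^{N+1}}\,ds.
\end{align*}
A Cauchy--Schwarz estimate in time bounds the remaining integral by $(C\gamma)^{-1/2}\|\omega\|_{L^2_t H^{N+1}}$, whence, taking the supremum over $t\in[0,T]$,
\begin{align*}
  Y(T)\leq C\|U_0\|_{H^N\times H^{N+1}}+C'(C\gamma)^{-1/2}\,\|\omega\|_{L^2_t H^{N+1}}\,Y(T).
\end{align*}
By the second hypothesis $\|\omega\|_{L^2_t H^{N+1}}\ll 1/\alpha$ is small enough that the prefactor of $Y(T)$ on the right is at most $\tfrac12$; absorbing it yields $Y(T)\leq 2C\|U_0\|_{H^N\times H^{N+1}}$, uniformly in $T$. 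Unwinding the weight and using $\|\theta\|_{H^N}\leq\|\theta\|_{H^{N+1}}$ gives the stated decay, and the choice $\gamma=\min(\nu,\eta)^{1/3}$ from the linear analysis of Section \ref{sec:couette} produces the enhanced dissipation rate.

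I expect the main obstacle to be the derivative bookkeeping in the nonlinear estimate, specifically ensuring in the $\theta$-equation that the single lost derivative is transferred onto $v$, and hence onto $\omega$ (the $L^2_t$-integrable factor), rather than onto $\theta$, which would force control at the unavailable regularity $H^{N+2}$. Handling the Biot--Savart operator $\nabla_t^\perp\Delta_t^{-1}$ together with the $x$-average/zero-mode splitting, and keeping the two components consistent in the product space $H^N\times H^{N+1}$, is the delicate part; once the clean quadratic bound with the $\|\omega\|_{H^{N+1}}$ factor is in hand, the weighted Gr\"onwall/absorption step is routine.
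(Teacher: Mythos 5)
Your overall architecture is the same as the paper's: Duhamel's formula, the semigroup decay to carry the exponential, Cauchy--Schwarz in time against an $L^2_t$-integrable factor, and a weighted absorption in place of the paper's explicit bootstrap contradiction (these last two are equivalent mechanisms). However, two steps as you set them up do not go through. First, the key nonlinear estimate
$\|\mathcal{N}(U)\|_{H^N\times H^{N+1}}\lesssim \|U\|_{H^N\times H^{N+1}}\,\|\omega\|_{H^{N+1}}$
is not attainable for the $\theta$-component by ``distributing derivatives'': when you measure $v\cdot\nabla_t\theta$ in $H^{N+1}$, the top-order Leibniz term puts all $N+1$ derivatives on $\nabla_t\theta$, producing $\|v\|_{L^\infty}\|\nabla_t\theta\|_{H^{N+1}}$, i.e.\ $N+2$ derivatives of $\theta$, and no rearrangement of the product rule moves that derivative onto $v$. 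The cancellation that kills this top-order term (antisymmetry of $v\cdot\nabla_t$ for divergence-free $v$) is available only inside an energy pairing, as in Theorem \ref{thm:nonlinear}; it is not available when you must bound the norm of the forcing itself inside a Duhamel integral. The paper sidesteps this by estimating the forcing one derivative lower, schematically $\|u\cdot\nabla_t\theta\|\lesssim\|\omega\|_{H^N}\|\nabla_t\theta\|_{H^N}$ and $\|u\cdot\nabla_t\omega\|\lesssim\|\omega\|_{H^N}\|\nabla_t\omega\|_{H^N}$, so that the $L^2_t$-integrable factors are exactly the dissipation quantities $\|\nabla_t\omega\|_{L^2_tH^N}\lesssim\epsilon_1\nu^{-1/2}$ and $\|\nabla_t\theta\|_{L^2_tH^N}\lesssim\epsilon_2\eta^{-1/2}$ that Theorem \ref{thm:nonlinear} actually supplies, while the bootstrap hypothesis supplies the exponential decay of $\|\omega\|_{H^N}$.

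Second, your absorption step closes on the smallness of $(C\gamma)^{-1/2}\|\omega\|_{L^2_tH^{N+1}}$, justified ``by the second hypothesis $\|\omega\|_{L^2_tH^{N+1}}\ll 1/\alpha$.'' But in the regime of Theorem \ref{thm:nonlinear} one has $0\le\alpha<\eta^{1/2}\nu^{1/3}\epsilon_2/\epsilon_1$, so $\alpha$ is small (possibly zero), $1/\alpha$ is huge (possibly infinite), and this hypothesis carries no smallness at all. Its intended role is the opposite one: it guarantees $\alpha\|\omega\|_{L^2_tH^{N+1}}\ll 1$, which is what controls the linear coupling $\alpha\p_x\Delta_t^{-1}\omega$ when that term is treated as part of the forcing rather than the propagator --- note the paper's forced linear system omits it from the left-hand side, whereas you placed it inside $S$; having done so, you cannot recycle the hypothesis as a smallness input for the transport nonlinearity. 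The smallness that actually closes the absorption is the one the paper invokes: $\epsilon_1\ll\min(\nu,\eta)^{1/2}$ and $\epsilon_2\ll\sqrt{\nu\eta}\,\epsilon_1$ combined with the dissipation bounds, which make the prefactor of your $Y(T)$ of order $C^2\epsilon(\nu^{-1/2}+\eta^{-1/2})\gamma^{-1/2}$, and this is made $<\tfrac12$ by taking $\epsilon$ sufficiently small. With these two repairs --- measuring the forcing at the level matched to the dissipation bounds, and sourcing the smallness from $\epsilon$ rather than from $1/\alpha$ --- your weighted argument coincides with the paper's proof.
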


We remark that the linearized problem around Couette flow decays with a
rate $\exp(-C (\nu^{1/3}t)^3)$ (see Section \ref{sec:homogen}), which we may
estimate from above by
\begin{align*}
  \exp(C) \exp(-C \nu^{1/3}t),
\end{align*}
since $t^3\geq t-1$ for all $t\geq 0$.
To the author's knowledge it is not known whether the nonlinear Navier-Stokes
problem exhibits the same faster $\exp(-C(\nu^{1/3}t)^3)$ decay instead of the
exponential decay by $\exp(-C\nu^{1/3}t)$.

\begin{proof}[Proof of Proposition \ref{prop:decayrates}]
  In order to prove Proposition \ref{prop:decayrates} we again use a bootstrap
  approach.
  For this purpose we interpret the nonlinear problem as a
forced linear problem:
\begin{align*}
  \dt \omega + \nu \Delta_t \omega + \p_x \theta = - u \cdot \nabla_t \omega =: f, \\
  \dt \theta + \eta \Delta_t \theta = - u \cdot \nabla \theta =:g.
\end{align*}
Denoting the semigroup of the linearized problem by $S(\cdot, \cdot)$, we obtain
the integral equation
\begin{align}
  \label{eq:24}
  (\omega(t), \theta(t))= S(t,0) (\omega_0, \theta_0) + \int_0^t S(t, \tau) (f(\tau), g(\tau)) d\tau.
\end{align}
By assumption on the decay rate of the semi-group the first contribution can be
estimated by
\begin{align*}
  \exp(-\gamma t) (\|\omega_0\|_{H^N} + \|\theta_0\|_{H^N}).
\end{align*}
For the nonlinear contribution we derive a first, rough estimate by using that
\begin{align}
  \label{eq:25}
  \|f\|_{H^N} \leq \|\omega\|_{H^N} \|\nabla_t \omega\|_{H^N}, \\
  \|g\|_{H^{N+1}} \leq \|\omega\|_{H^N} \|\nabla_t \theta\|_{H^{N+1}}.
\end{align}
It then follows that at least for very small times the nonlinear contribution is
bounded by $\epsilon^2$ and as a consequence for these small times
\begin{align}
  \label{eq:26}
  \|(\omega(t), \theta(t))\|_{H^N}  \leq 2 \exp(-\gamma t/2) \epsilon.
\end{align}
We next argue by a bootstrap iteration that the estimate \eqref{eq:26} holds for
all times.
Thus suppose that \eqref{eq:26} holds for $0\leq t \leq T$ and assume for the
sake of contradiction that $T<\infty$ is maximal.
The first contribution in \eqref{eq:24} is bounded by
\begin{align*}
  \exp(-\gamma t) \epsilon
\end{align*}
and thus both small and fast decaying.
We hence focus on the contribution by the nonlinearity.
Here we combine the combine the decay estimate of $S(t,\tau)$, \eqref{eq:25} and
\eqref{eq:26} to estimate
\begin{align*}
  \left\| \int_0^t S(t, \tau) (f(\tau), g(\tau)) d\tau \right\| d\tau \\
  \leq \int_0^t C \exp(-\gamma (t-\tau)) (\|\omega(\tau)\|_{H^N} \|\nabla_t \omega(\tau)\|_{H^N} + \|\omega(\tau)\|_{H^N} \|\nabla_t \theta(\tau)\|_{H^N}) d\tau \\
  \leq C^2 \int_0^t \exp(-\gamma(t-\tau)) \exp(-\gamma \tau /2) (\|\nabla_t \omega(\tau)\|_{H^N} +\|\nabla_t \theta(\tau)\|_{H^N}) d\tau \\
  \leq C^2 \epsilon \exp(-\gamma t /2) \int_0^t\exp(-\gamma(t-\tau)/2) (\|\nabla_t \omega(\tau)\|_{H^N} +\|\nabla_t \theta(\tau)\|_{H^N}) d\tau.
\end{align*}
We then use the $L^2$ integrability assumption on $(\|\nabla_t
\omega(\tau)\|_{H^N} +\|\nabla_t \theta(\tau)\|_{H^N})$ and the Cauchy-Schwarz
inequality to further bound this by
\begin{align*}
  C^2 \frac{\epsilon^2 (\nu^{-1/2}+\eta^{-1/2})}{\sqrt{\gamma}}\exp(-\gamma t /2).
\end{align*}
By the assumption on $\epsilon$ this is smaller than
\begin{align*}
  \epsilon \exp(-\gamma t /2) < 2 \epsilon \exp(-\gamma t /2).
\end{align*}
Thus equality in \eqref{eq:26} is not attained for $t=T$, which contradicts the
maximality of $T$. Therefore, the maximal time is infinity, which concludes the proof.
\end{proof}

\subsection*{Acknowledgments}
Christian Zillinger's research is supported by the ERCEA under the grant 014
669689-HADE and also by the Basque Government through the BERC 2014-2017
program and by Spanish Ministry of Economy and Competitiveness MINECO: BCAM Severo Ochoa excellence accreditation SEV-2013-0323.

\bibliographystyle{alpha}
\bibliography{citations2}

\begin{thebibliography}{DWZZ18}

\bibitem[BM15]{bedrossian2013inviscid}
Jacob Bedrossian and Nader Masmoudi.
\newblock Inviscid damping and the asymptotic stability of planar shear flows
  in the 2d {E}uler equations.
\newblock {\em Publications math{\'e}matiques de l'IH{\'E}S}, 122(1):195--300,
  2015.

\bibitem[BMM16]{bedrossian2013landau}
Jacob Bedrossian, Nader Masmoudi, and Cl{\'e}ment Mouhot.
\newblock Landau damping: paraproducts and {G}evrey regularity.
\newblock {\em Annals of PDE}, 2(1):4, 2016.

\bibitem[BMV16]{bedrossian2016enhanced}
Jacob Bedrossian, Nader Masmoudi, and Vlad Vicol.
\newblock Enhanced dissipation and inviscid damping in the inviscid limit of
  the {N}avier--{S}tokes equations near the two dimensional {C}ouette flow.
\newblock {\em Archive for Rational Mechanics and Analysis}, 219(3):1087--1159,
  2016.

\bibitem[BVW16]{bedrossian2016sobolev}
Jacob Bedrossian, Vlad Vicol, and Fei Wang.
\newblock The {S}obolev stability threshold for {2D} shear flows near
  {C}ouette.
\newblock {\em arXiv preprint arXiv:1604.01831}, 2016.

\bibitem[CD80]{cannon1980initial}
JR~Cannon and Emmanuele DiBenedetto.
\newblock The initial value problem for the {B}oussinesq equations with data in
  {$L^p$}.
\newblock In {\em Approximation methods for Navier-Stokes problems}, pages
  129--144. Springer, 1980.

\bibitem[Cha06]{chae2006global}
Dongho Chae.
\newblock Global regularity for the 2d {B}oussinesq equations with partial
  viscosity terms.
\newblock {\em Advances in Mathematics}, 203(2):497--513, 2006.

\bibitem[CKN99]{chae1999local}
Dongho Chae, Sung-Ki Kim, and Hee-Seok Nam.
\newblock Local existence and blow-up criterion of {H}{\"o}lder continuous
  solutions of the {B}oussinesq equations.
\newblock {\em Nagoya Mathematical Journal}, 155:55--80, 1999.

\bibitem[CZZ19]{coti2019degenerate}
Michele Coti~Zelati and Christian Zillinger.
\newblock On degenerate circular and shear flows: the point vortex and power
  law circular flows.
\newblock {\em Communications in Partial Differential Equations},
  44(2):110--155, 2019.

\bibitem[{\relax DLMF}]{NIST:DLMF}
{\it NIST Digital Library of Mathematical Functions}.
\newblock http://dlmf.nist.gov/, Release 1.0.24 of 2019-09-15.
\newblock F.~W.~J. Olver, A.~B. {Olde Daalhuis}, D.~W. Lozier, B.~I. Schneider,
  R.~F. Boisvert, C.~W. Clark, B.~R. Miller, B.~V. Saunders, H.~S. Cohl, and
  M.~A. McClain, eds.

\bibitem[DM18]{dengmasmoudi2018}
Yu~Deng and Nader Masmoudi.
\newblock Long time instability of the {C}ouette flow in low {G}evrey spaces.
\newblock {\em arXiv preprint arXiv:1803.01246}, 2018.

\bibitem[DWZZ18]{doering2018long}
Charles~R Doering, Jiahong Wu, Kun Zhao, and Xiaoming Zheng.
\newblock Long time behavior of the two-dimensional {B}oussinesq equations
  without buoyancy diffusion.
\newblock {\em Physica D: Nonlinear Phenomena}, 376:144--159, 2018.

\bibitem[DZ19]{dengZ2019}
Yu~Deng and Christian Zillinger.
\newblock Echo chains as a linear mechanism: Norm inflation, modified exponents
  and asymptotics.
\newblock {\em arXiv preprint arXiv:1910.12914}, 2019.

\bibitem[FMT87]{foias1987attractors}
C~Foias, O~Manley, and R~Temam.
\newblock Attractors for the b{\'e}nard problem: existence and physical bounds
  on their fractal dimension.
\newblock {\em Nonlinear Analysis: Theory, Methods \& Applications},
  11(8):939--967, 1987.

\bibitem[Jia19]{jia2019linear}
Hao Jia.
\newblock Linear inviscid damping in {G}evrey spaces.
\newblock {\em arXiv preprint arXiv:1904.01188}, 2019.

\bibitem[LLT13]{larios2013global}
Adam Larios, Evelyn Lunasin, and Edriss~S Titi.
\newblock Global well-posedness for the 2d {B}oussinesq system with anisotropic
  viscosity and without heat diffusion.
\newblock {\em Journal of Differential Equations}, 255(9):2636--2654, 2013.

\bibitem[LT16]{li2016global}
Jinkai Li and Edriss~S Titi.
\newblock Global well-posedness of the 2d {B}oussinesq equations with vertical
  dissipation.
\newblock {\em Archive for Rational Mechanics and Analysis}, 220(3):983--1001,
  2016.

\bibitem[Luo]{luosobolev}
Xiang Luo.
\newblock The {S}obolev stability threshold of 2d hyperviscosity equations for
  shear flows near {C}ouette flow.
\newblock {\em Mathematical Methods in the Applied Sciences}.

\bibitem[Tem12]{temam2012infinite}
Roger Temam.
\newblock {\em Infinite-dimensional dynamical systems in mechanics and
  physics}, volume~68.
\newblock Springer Science \& Business Media, 2012.

\bibitem[TW19]{tao20192d}
Lizheng Tao and Jiahong Wu.
\newblock The 2d {B}oussinesq equations with vertical dissipation and linear
  stability of shear flows.
\newblock {\em Journal of Differential Equations}, 267(3):1731--1747, 2019.

\bibitem[Wu12]{wu20122d}
JH~Wu.
\newblock The 2d incompressible {B}oussinesq equations.
\newblock {\em Peking University Summer School Lecture Notes, Beijing}, 2012.

\bibitem[WXZ19]{wu2019stability}
Jiahong Wu, Xiaojing Xu, and Ning Zhu.
\newblock Stability and decay rates for a variant of the 2d
  {B}oussinesq--{B}{\'e}nard system.
\newblock {\em Communications in Mathematical Sciences}, 17(8):2325--2352,
  2019.

\bibitem[WZZ17]{WZZkolmogorov}
D.~{Wei}, Z.~{Zhang}, and W.~{Zhao}.
\newblock {Linear inviscid damping and enhanced dissipation for the Kolmogorov
  flow}.
\newblock {\em ArXiv e-prints}, November 2017.

\bibitem[YL18]{yang2018linear}
Jincheng Yang and Zhiwu Lin.
\newblock Linear inviscid damping for {C}ouette flow in stratified fluid.
\newblock {\em Journal of Mathematical Fluid Mechanics}, 20(2):445--472, 2018.

\bibitem[Zil19]{zillinger2019linear}
Christian Zillinger.
\newblock Linear inviscid damping in {S}obolev and {G}evrey spaces.
\newblock {\em arXiv preprint arXiv:1911.00880}, 2019.

\bibitem[Zil20]{zillinger2020landau}
Christian Zillinger.
\newblock On echo chains in {L}andau damping: Self-similar solutions and
  {G}evrey 3 as a linear stability threshold.
\newblock {\em arXiv preprint arXiv:2001.00513}, 2020.

\end{thebibliography}

\end{document}